\let\bbordermatrix\bordermatrix
\patchcmd{\bbordermatrix}{8.75}{4.75}{}{}
\patchcmd{\bbordermatrix}{\left(}{\left[}{}{}
\patchcmd{\bbordermatrix}{\right)}{\right]}{}{}
\numberwithin{equation}{section}
\newtheorem{theorem}{Theorem}[section]
\newtheorem{thm}[theorem]{Theorem}
\newtheorem{lemma}[theorem]{Lemma}
\newtheorem{lem}[theorem]{Lemma}
\newtheorem{cor}[theorem]{Corollary}
\newtheorem{ex}[theorem]{Example}
\newtheorem*{ex*}{Example}
\newtheorem{df}[theorem]{Definition}
\newcommand{\mtx}[1]{\left[\,\begin{matrix} #1\end{matrix}\,\right ]}       
\newcommand\diag{{\mbox{diag}\,}}
\newcommand\g{\mathfrak g}
\newcommand\s{\mathfrak s}
\newcommand\gl{\mathfrak {gl}}
\renewcommand\l{\mathfrak l}
\renewcommand\s{\mathfrak s}
\newcommand\R{\mathbb R}
\newcommand\C{\mathbb C}
\newcommand\N{\mathbb N}
\newcommand\Z{\mathbb Z}
\newcommand\F{\mathbb F}
\newcommand\ad{\operatorname{ad}}
\newcommand\tr{{\rm tr}}
\newcommand\End{\mbox{\rm End\,}}
\newcommand\Hom{\mbox{\rm Hom\,}}
\newcommand\Der{\mbox{\rm Der\,}}
\newcommand{\Ker}{\mbox{\rm Ker\,}}
\renewcommand{\Im}{\mbox{\rm Im\,}}
\newcommand{\mc}{\mathcal }
\newcommand{\nor}[1]{{\rm N}({#1})}
\newcommand{\cen}[1]{{\rm Z}({#1})}
\newcommand{\M}{{\mathcal M}}
\newcommand{\wt}[1]{\widetilde{#1}}
\newcommand{\E}[2]{E^{(#2)}_{#1}}	
\newcommand{\pr}[1]{\left( #1 \right)}
\newcommand{\vpr}[1]{\left\{ #1 \right\}}
\renewcommand{\L}{{\mathcal L}}
\begin{document}

\title[Derivations of Lie algebras of dominant upper triangular ladder matrices]{Derivations of   Lie algebras of dominant upper triangular ladder matrices}

\date{}

\author[P. Ghimire and H. Huang] {Prakash Ghimire, Huajun Huang}

\address{Department of Mathematics and Statistics, Auburn University, Auburn, AL 36849, USA}

\email{pzg0011@auburn.edu,  huanghu@auburn.edu}

\begin{abstract}
We explicitly describe the Lie algebras $M_{\L}$ of  ladder matrices in $M_n$ associate with dominant upper triangular ladders $\L$, and
completely characterize the derivations of  these  $M_{\L}$  over a field $\F$ with ${\rm char}(\F)\ne 2$. We also completely characterize the derivations of Lie algebras $[M_{\L}, M_{\L} ]$ where $\L$ are strongly dominant upper triangular ladders and ${\rm char}(\F)\ne 2, 3$. 
\end{abstract}

\subjclass[2010]{Primary 17B40, Secondary 15B99, 16W25, 17B05}

\maketitle

{\bf Keywords:} ladder matrix; upper triangular ladder; derivation algebra

\section{Introduction}

Ladder matrix is a natural extension of block upper triangular matrix.   
A ladder matrix is one that has zero entries outside of a ladder shape region. 
Let $[n]:=\{1,2,\cdots,n\}.$  Given a field $\F$, let $M_{mn}$ be the set of   $m\times n$ matrices over   $\mathbb{F}$, and $M_n:=M_{nn}$.
We define a partial order on $\Z^+\times\Z^+$: 
$(i_1,j_1)$ is said to {\em dominate} $ (i_2,j_2)$, written as $(i_1,j_1)\succeq (i_2,j_2)$, whenever $i_1\ge i_2$ and $j_1\le j_2.$

\begin{df}
A subset $\L:=\{(i_1,j_1),\cdots, (i_s,j_s)\}$ of the index set $[n]\times [n]$ of $M_{n}$ is called  a {\em ladder} of {\em step} $s$ and {\em size} $n$, if
$$i_1<i_2<\cdots<i_s\qquad \text{and}\qquad j_1<j_2<\cdots<j_s.$$
Each $(i_\ell,j_\ell)$ ($\ell\in[s]$) is called a {\em corner point} of $\L$.
 The  set $M_{\L}$  of {\em $\L$-ladder matrices} is  a subset  of $M_n$  defined by $M_{\emptyset}=\{0\}$ and
$$
M_{\L} :=\sum_{(i,j)\in I(\L)} \F E_{ij},
$$
where 
$$
I(\L):=\{(i,j)\in [n]\times[n]: \text{$(i,j)\preceq (i_{\ell},  j_{\ell})$ for some $\ell\in [s]$}\},
$$
and $E_{ij}\in M_n$ denotes the $(i,j)$ standard matrix that has 1 as the $(i,j)$ entry and $0$ elsewhere.
\end{df}

In other words, $M_{\L}$ consists of matrices that have nonzero entries only  in the upper right direction of some coner points $(i_{\ell},j_{\ell})$ of $\L$.
In  \cite{BH}, Brice and Huang introduce the notion of ladder matrix and proved that
$M_{\L}\cdot M_{\L'}=M_{\L''},$
where $\L$ and $\L'$ are two arbitrary ladders of size $n$, and $\L''$ is a ladder decided by $\L$ and $\L'$.
In particular, if $\L$ is an upper triangular ladder (i.e., $i_{\ell}<j_{\ell+1}$ for $\ell\in[s-1]$, see Definition \ref{df-ladder}), then $M_{\L}$ is a matrix subalgebra of $M_{n}$.
Naturally, $M_{\L}$ is  a Lie subalgebra of $M_{n}$ (aka $\gl(n,\F)$) with respect to the standard Lie bracket $[X, Y]=XY-YX$.

Typical examples of    Lie algebras  $M_{\L}$ include those of  block upper triangular  matrices and of strictly block upper triangular matrices,
$M_{pq}$  embedded in the upper right corner of $M_n$ (when $p\le n$ and $q\le n$), and  $M_n$ itself. 
In 1957,  Dixmier and Lister constructed 
a nilpotent Lie algebra   \cite{DL} to disprove the converse of a statement of  Jacobson 
\cite{J}: ``a Lie algebra with a nonsingular derivation is nilpotent''; the corresponding derivation algebra is clearly embeded in a special nilpotent $M_{\L}$.

A derivation of Lie algebra $\g$ is a linear map $f\in\End(\g)$ that satisfies
$$f([X,Y])=[f(X),Y]+[X,f(Y)]\qquad\text{for all \ } X, Y\in\g.$$
The Lie derivations and generalized derivations of ladder shape matrix Lie algebras over a field or  ring has drawn much attension in recent years.  Here is a fairly imcompleted list of literatures.  
Chen determines the structure of certain generalized derivations of a parabolic subalgebra of
$\gl(n,\F)$ over a field $\F$  with ${\rm char} (\F)\ne 2$ and $|\F|>n\ge 3$ \cite{ZXC}.  Brice describes the derviations of parabolic 
subalgebra  of a reductive Lie algebra over an algebraically closed and characteristics zero field, and proves the zero-product determined property of such derivation algebras \cite{B}. 
Let $R$ be a communicative ring with identity. Cheung characterizes proper Lie derivations and gives sufficient conditions for any Lie derivation to be proper for triangular algebras over $R$ \cite{C}. 
Du and Wang investigate the Lie derivations of $2\times 2$ block generalized  matrix algebras \cite{DW}.
Wang, Ou, and Yu  describe the derivations of  intermediate Lie algebras 
between diagonal matrix algebra and upper triangular matrix algebra in $\gl(n, R)$ \cite {WOY}. 
Wang and Yu characterized all the derivations of parabolic subalgebras of $\gl(n,R)$ \cite{WQ}. 
Ou, Wang, and Yao describe the derivations of the Lie algebra of stictly upper triangular matrices in $\gl(n,R)$ \cite{OWY}.  
Ji, Yang, and Chen study the biderivations of the algebra of strictly upper triangular matrices in $\gl(n,R)$ \cite{JYC}. 
The Lie triple derivations are also extensively studied, for examples, on $\gl(n,R)$ \cite{KZL}, 
 on the algebra of upper triangular matrices of $\gl(n,R)$ \cite{Bd},
and on the parabolic subalgebras of $\gl(n,R)$ \cite{LCL}.

In this paper, we explicitly characterize the derivations of  the  Lie algebra $M_{\L}$  associate with a  dominant upper triangular (DUT) ladder $\L$
 for  ${\rm char}(\F)\ne 2$ (Theorem \ref{Main theorem}), and  the derivations of   $[M_{\L}, M_{\L}]$ associate with  a strongly dominant upper triangular (SDUT) ladder $\L$ for ${\rm char}(\F)\ne 2, 3$ (Theorem \ref{thm:strongly dominant}). 
 A ladder $\L=\{(i_1,j_1),\cdots,(i_s,j_s)\}$ is called DUT (resp. SDUT) if  $j_\ell\le i_\ell<j_{\ell+1}$ (resp. $j_\ell<i_\ell<j_{\ell+1}$) for $\ell\in[s-1]$. 
All $M_{\L}$ associate with  a DUT ladder $\L$ are completely characterized in Theorem \ref{thm:dominant-ladder}.  
\begin{itemize}
\item Theorem \ref{thm:dominant-ladder}: $\L$ is  DUT  if and only if $M_{\L}$
can be obtained by removing some non-consecutive diagonal blocks from the set of block upper triangular matrices corresponding to a partition of $[n]$.
\item
Theorem \ref{Main theorem}: When ${\rm char}(\F)\ne 2$ and $\L$ is a DUT ladder, 
every  derivation of  $M_{\L}$  is a sum of the adjoint action of a block upper triangular matrix and a linear map from $M_{\L}/[M_{\L},M_{\L}]$ to the center of $M_{\L}$. 
\item
Theorem \ref{thm:strongly dominant}: When ${\rm char}(\F)\ne 2,3$ and $\L$ is a SDUT ladder, 
every  derivation of  $[M_{\L},M_{\L}]$ is  the adjoint action of a block upper triangular matrix, so that it could be extended to a derivation of $M_{\L}$.
\end{itemize}
In general, a derivation of a Lie algebra stabilizes each subalgebra appearing in the derived series.  
Moreover,  the derived series of a non-solvable Lie algebra of upper triangular ladder matrices will terminate at $[M_{\L}, M_{\L}]$ 
for certain SDUT ladder $\L$.  
Therefore, knowledge on the derivation algebra of these $[M_{\L}, M_{\L}]$ would be useful
 to disclose the structure of derivations of Lie algebras of general upper triangular ladder matrices.

The paper is organized as follow: Section 2 provides some basic  properties of   ladder matrices; in particular, 
all DUT ladder matrix algebras are completely characterized  (Theorem \ref{thm:dominant-ladder}), and the   counting of these algebras in $M_n$ is done  (Corollary \ref{thm:counting-dominant}). Section 3 characterizes the derivations of $M_{\L}$ for DUT ladders $\L$ and  ${\rm char}(\F)\ne 2$   (Theorem \ref{Main theorem}), and gives examples and applications, e.g. on the derivations of step 1 ladder matrix algebras (Theorem \ref{thm:step 1}).  Section 4 gives the proof of the main theorem in Section 3. 
Section 5 determines  the derivations of $[M_{\L}, M_{\L}]$ for SDUT ladders $\L$ and ${\rm char}(\F)\ne 2, 3$.

\section{Preliminary}

We develop some basic properties of ladders and ladder matrices in this section. 
Given a ladder $\L\subset[n]\times[n]$, the matrices in $M_{\L}$  could be viewed as block matrices with respect to  suitable partitions.
 A {\em partition} of $[n]$ can be characterized by a subset
$$\gamma=\{i_1,i_2,\cdots,i_s\}\subseteq[n-1],\qquad i_1\le i_2\le\cdots\le i_s,$$
where the corresponding partition in $M_{n}$ is done {\em right after} the $i_1,i_2,\cdots,i_s$ rows
and columns.  Every ladder $\L$ corresponds to one simplest compatible partition  defined below.

\begin{df}\label{block matrix form}
Let $\L=\{(i_1,j_1),\cdots,(i_s,j_s)\}\subset [n]\times[n]$ be a ladder. 
{\em  The partition of $\L$}  is a partition of $[n]$ characterized by 
$$
  \gamma_{\L}:=\{i_1,i_2,\cdots,i_s\}\cup\{j_1-1,j_2-1,\cdots,j_s-1\}-\{0,n\}.
  $$
The  matrices in $M_{\L}$ could be viewed as block matrices with respect to the partition $\gamma_{\L}$. Denote by $[I(\L)]$
{\em the index set of nonzero blocks of  $M_{\L}$ with respect to  $\gamma_{\L}$}. 
If we set $t:=|\gamma_{\L}|+1$, then $[I(\L)]\subset [t]\times [t]$.
\end{df}

The set of block upper triangular matrices corresponding to  a partition $\gamma_{\L}=\{n_1,\cdots,n_{t-1}\}$ is exactly  $M_{\L_{B}}$,  where
\begin{equation}
\L_{B}=\{(n_1,1), (n_2,n_1+1),\cdots,(n_{t-1},n_{t-2}+1), (n, n_{t-1}+1)\}.
\end{equation}

We introduce some special ladders to be used in the paper.

\begin{df}\label{df-ladder}
A ladder $\L=\{(i_1,j_1),\cdots,(i_s,j_s)\}$  in $[n]\times [n]$ is called
\begin{itemize}
\item
{\em upper triangular:} if $i_{\ell}<j_{\ell+1}$ for  $\ell\in[s-1]$;
\item
{\em strictly  upper triangular:} if $i_{\ell}<j_{\ell}$ for $\ell\in[s]$;
\item
{\em dominant upper triangular (DUT):} if $j_\ell\le i_\ell<j_{\ell+1}$ for $\ell\in[s-1]$;
\item
{\em strongly dominant upper triangular (SDUT):} if $j_\ell<i_\ell<j_{\ell+1}$ for $\ell\in[s-1]$.
\end{itemize}
When $\L$ is upper triangular, a matrix in $M_{\L}$ is called an {\em upper triangular ($\L$-) ladder matrix}.
Similarly for the others.
\end{df}

The above different kinds of   ladder $\L$ can be easily distinguished by the shape of $M_{\L}$. 
They can also be
reinterpreted by the block form of $M_{\L}$ with respect to the partition $\gamma_{\L}$:
\begin{itemize}
\item
$\L$ is  upper triangular if $M_{\L}\subseteq M_{\L_{B}}$ (resp. $I(\L)\subseteq I(\L_{B})$);
\item
$\L$ is strictly upper triangular if  $M_{\L}$ is contained in the strictly block upper triangular part of $M_{\L_{B}}$;
\item
$\L$ is DUT if every block index $(i,j)\in [I(\L)]$ is dominated by a diagonal one $(k,k)\in [I(\L)]$;
\item
$\L$ is SDUT if $\L$ is DUT, and every nonzero diagonal block in $M_{\L}$ has size greater than 1.
\end{itemize}


\begin{ex} \label{ex:ladder-size7-11-43-55}
Consider the ladder $\L=\{(1,1),(4,3),(5,5)\}$ of size $7$. Then $\L$ is DUT but not SDUT.  The matrix form of $M_\L$ is given in Figure \ref{fig:ladder-size7-11-43-55-form}.
The index set $I(\L)$ of $\L$ consists of $(i,j)\in[7]\times[7]$ dominated by at least one of $(1,1)$, $(4,3)$, and $(5,5)$:
$$
I(\L)=\{(1,j): 1\le j\le 7\}\cup\{(i,j): 2\le i\le 4,\ 3\le j\le 7\}\cup\{(5,j): 5\le j\le 7\}.
$$
The partition of $\L$ is given by
$$\gamma_{\L}=\{1,4,5\}\cup\{1-1,3-1,5-1\}-\{0,7\}=\{1,2,4,5\}.$$
So matrices in $M_{\L}$ are partitioned after the 1, 2, 4, 5 rows and columns.  Figure \ref{fig:ladder-size7-11-43-55-block-form} indicates the block form of $M_{\L}$.
The block index set $[I(\L)]$ consists of $(i,j)\in[5]\times[5]$ dominated by at least one of $(1,1)$, $(3,3)$, and $(4,4)$:
$$
[I(\L)]=\{(1,j): 1\le j\le 5\}\cup\{(i,j): 2\le i\le 3,\ 3\le j\le 5\}\cup\{(4,4), (4,5)\}.
$$

\begin{figure}[ht]
\centering
\subfigure[matrix form of  $M_{\L}$] {
$\begin{array}{|c c c cc cc|}
\hline
* &* &* &* &* &*&*
\\ \cline {1-2}
0 &\multicolumn{1}{r|}{0} &* &* &* &*&*
\\
0 & \multicolumn{1}{r|}{0}&* &* &* & *&*
\\
0 & \multicolumn{1}{r|}{0} &*  &* &* & *&*
\\  \cline {3-4}
0 &0 &0 &\multicolumn{1}{r|}{0} &* & *&*
\\ \cline{5-7}
0 &0 &0 &0 &0 & 0&0
\\
0 &0 &0 &0 & 0 & 0&0
\\ \hline
\end{array}
$
\label{fig:ladder-size7-11-43-55-form}
}
\qquad\qquad
\subfigure[block matrix form of  $M_{\L}$]{%
$\begin{array}{|c: c:c c:c: c c|}
\hline
*& * &* &* &* &* &*
\\ \hdashline
0 &0 &* &* &* &* &*
\\ \hdashline
0 &0 &* &* &* &* &*
\\
0 &0 &* &* &* &* &*
\\ \hdashline
0 &0 &0 &0 &* &* &*
\\ \hdashline
0 &0 &0 &0 &0 &0 &0
\\
0 &0 &0 &0 &0 &0 &0
\\ \hline
\end{array}
$
\label{fig:ladder-size7-11-43-55-block-form}
}
\caption{Ladder $\L=\{(1,1), (4,3), (5,5)\}$ of size $7$}
\end{figure}
\end{ex}

Now we can completely characterize DUT ladders and ladder matrices in terms of the associated partition.

\begin{thm} \label{thm:dominant-ladder}
\begin{enumerate}
\item
A ladder $\L$ is DUT if and only if  for each $i\in [t-1]$, at least one of $(i,i)$ and $(i+1,i+1)$ is in $[I({\L})]$. 
In particular, $\L$ is DUT implies that $(i,j)\in [I({\L})]$ for any $i,j\in[t]$ with $i<j$. 
\item
Equivalently, $M_{\L}$ is a set of DUT ladder matrices, if and only if it
can be obtained by removing some non-consecutive diagonal blocks from the set of block upper triangular matrices corresponding to a partition of $[n]$.
\end{enumerate}
\end{thm}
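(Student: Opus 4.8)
The plan is to prove the two parts in sequence, with part (1) doing the combinatorial heavy lifting and part (2) being a translation of (1) into the language of block matrices. Throughout I will work with the partition $\gamma_{\L}$ and the block index set $[I(\L)]\subset[t]\times[t]$ from Definition \ref{block matrix form}, and I will use the already-recorded fact that $\L$ is DUT precisely when every block index $(i,j)\in[I(\L)]$ is dominated (in the $\succeq$ sense) by some diagonal block index $(k,k)\in[I(\L)]$.

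For part (1), first I would unwind what the corner points $(i_\ell,j_\ell)$ of $\L$ become after passing to $\gamma_{\L}$. Since $\gamma_{\L}=\{i_1,\dots,i_s\}\cup\{j_1-1,\dots,j_s-1\}-\{0,n\}$, each row index $i_\ell$ and each column index $j_\ell$ of a corner point sits at a partition line, so the corner point $(i_\ell,j_\ell)$ corresponds to a block index $(p_\ell,q_\ell)\in[t]\times[t]$ where $p_\ell$ is the block containing row $i_\ell$ (the last such block) and $q_\ell$ the block containing column $j_\ell$ (the first such block). The DUT condition $j_\ell\le i_\ell<j_{\ell+1}$ says, at the block level, that $q_\ell\le p_\ell$ and that the block containing $i_\ell$ is strictly to the left of the block containing $j_{\ell+1}$, i.e. $p_\ell<q_{\ell+1}$. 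Then $[I(\L)]$ is exactly the set of $(i,j)$ dominated by some $(p_\ell,q_\ell)$, i.e. $i\le p_\ell$ and $j\ge q_\ell$ for some $\ell$. Now I claim the diagonal blocks $(k,k)$ in $[I(\L)]$ are precisely the $k$ with $q_\ell\le k\le p_\ell$ for some $\ell$, and the intervals $[q_\ell,p_\ell]$ are pairwise disjoint with gaps of size at least one (because $p_\ell<q_{\ell+1}$, so there is at least the index $p_\ell+1\le q_{\ell+1}-1$... — more care is needed here: the gap $\{p_\ell+1,\dots,q_{\ell+1}-1\}$ is nonempty exactly when $p_\ell+1\le q_{\ell+1}-1$; I need to check this follows from $i_\ell<j_{\ell+1}$ together with the definition of $\gamma_\L$ placing a partition line at $i_\ell$ and at $j_{\ell+1}-1$). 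Granting that, the set of $k\in[t]$ with $(k,k)\notin[I(\L)]$ is a union of single blocks, no two consecutive, which is exactly the statement ``for each $i\in[t-1]$, at least one of $(i,i),(i+1,i+1)$ is in $[I(\L)]$.'' The converse direction: given that missing diagonal blocks are non-consecutive, I reconstruct the corner points as the maximal runs of present diagonal blocks (together with the off-diagonal reach) and verify $j_\ell\le i_\ell<j_{\ell+1}$ directly from ``runs are separated by at least one missing block.'' The final clause of (1), that $i<j$ forces $(i,j)\in[I(\L)]$, follows because any such $(i,j)$ is dominated by $(i,i)$ or by $(i+1,i+1)$ — whichever is present — and domination is transitive with $i\le i+1$, $j\ge i$ wait, one checks $(i,i)\succeq(i,j)$ when $j\ge i$ and $(i+1,i+1)\succeq(i,j)$ when $i+1\ge i$ and $i+1\le j$, i.e. $j\ge i+1$; together these cover all $j>i$.

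For part (2), the set of block upper triangular matrices for the partition $\gamma_{\L}$ is $M_{\L_B}$ with the full index set $[I(\L_B)]=\{(i,j):i\le j\}$. By part (1), DUT is equivalent to $[I(\L)]$ being obtained from $[I(\L_B)]$ by deleting a set $D\subseteq\{(k,k):k\in[t]\}$ of diagonal block indices with no two consecutive (here I should also note $[I(\L)]$ always contains the strictly-upper part, again by the last clause of (1), so the only freedom is which diagonal blocks to keep). Translating back: $M_{\L}$ is obtained from $M_{\L_B}$ by zeroing out the diagonal blocks indexed by $D$, i.e. by ``removing some non-consecutive diagonal blocks.'' Conversely, any such removal yields a block index set satisfying the criterion of part (1), hence a DUT ladder. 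The one subtlety is bookkeeping: different ladders can give the same partition only up to the convention that $\gamma_{\L}$ is the \emph{simplest} compatible partition, so I should remark that the DUT ladders $\L$ are in bijection with pairs (partition of $[n]$, choice of non-consecutive set of diagonal blocks to delete) \emph{subject to} the minimality of $\gamma_{\L}$ — equivalently, one cannot have two deleted adjacent... this is automatically consistent since deleting non-consecutive blocks never merges partition lines.

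The main obstacle I anticipate is the translation in part (1) between the ``raw'' corner coordinates $(i_\ell,j_\ell)$ and their block coordinates $(p_\ell,q_\ell)$, and in particular verifying that the DUT inequality $i_\ell<j_{\ell+1}$ at the entry level becomes \emph{strict} separation $p_\ell<q_{\ell+1}$ with a genuine gap at the block level, so that the missing diagonal blocks really are non-consecutive rather than merely distinct. This requires using the specific form of $\gamma_{\L}$ carefully — one must see that $i_\ell$ and $j_{\ell+1}-1$ both lie in $\gamma_{\L}$ and $i_\ell\le j_{\ell+1}-1$, so they determine (possibly equal) partition lines, and then argue the block immediately after the $i_\ell$-line and immediately before the $j_{\ell+1}$-line account for the gap. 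Once this dictionary is set up cleanly, both directions of both parts are short.
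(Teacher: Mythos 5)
Your translation of the corner points into block coordinates and the identification of the present diagonal blocks as the union of the intervals $[q_\ell,p_\ell]$ is a workable route (and a different one from the paper, which argues the forward direction in two lines from the minimality of $\gamma_{\L}$: if neither $(i,i)$ nor $(i+1,i+1)$ lies in $[I(\L)]$ then $(i,i+1)\notin[I(\L)]$, and then the cut $n_i$ could not occur among the $i_m$'s and $j_m-1$'s). But at the decisive step your inequality points the wrong way. What the criterion ``no two consecutive missing diagonal blocks'' requires is that each gap $\{p_\ell+1,\dots,q_{\ell+1}-1\}$ between consecutive present intervals has size \emph{at most} one; you instead claim, and propose to verify, that the gaps have size \emph{at least} one (``strict separation with a genuine gap''), and then infer ``a union of single blocks, no two consecutive'' from that. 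That inference is a non sequitur --- the missing diagonal blocks \emph{are} the gaps, so a larger gap makes them more, not less, likely to be consecutive --- and the claim itself is false: when $i_\ell=j_{\ell+1}-1$ the two cut lines coincide, $q_{\ell+1}=p_\ell+1$, and the gap is empty (e.g.\ $\L=\{(1,1),(2,2)\}$ in $M_2$, where no diagonal block is missing at all).

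The content you actually need to supply is the upper bound: $\gamma_{\L}$ has no element strictly between $i_\ell$ and $j_{\ell+1}-1$, because for $m\le\ell$ one has $i_m\le i_\ell$ and $j_m-1\le i_\ell-1$ (using $j_m\le i_m$), while for $m\ge\ell+1$ one has $j_m-1\ge j_{\ell+1}-1$ and $i_m\ge j_{\ell+1}$; hence the rows strictly between $i_\ell$ and $j_{\ell+1}$ form at most one block, so at most one diagonal block is missing there. The same argument must also be run for the two boundary gaps (rows $1,\dots,j_1-1$ and rows $i_s+1,\dots,n$), which your sketch never mentions but which the criterion equally constrains (blocks $1$ and $2$, or $t-1$ and $t$, must not both be missing). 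Your converse is likewise too vague and leans on the same misstated ``separated by at least one missing block'' picture: the corner points of $\L$ are given, and what has to be checked is that the criterion forces each $(i_\ell,j_\ell)$ to sit at the bottom-left of a run of present diagonal blocks, from which $j_\ell\le i_\ell<j_{\ell+1}$ follows. Your final-clause argument (dominance by whichever of $(i,i)$, $(i+1,i+1)$ is present) and the part (2) translation are fine once part (1) is repaired.
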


\begin{proof} It suffices to prove the first statement. 
Let $\L=\{(i_1,j_1),\cdots,(i_s,j_s)\}$ and $\gamma_{\L}=\{n_1,\cdots,n_{t-1}\}$, so that the set of block upper triangular matrices is $M_{\L_{B}}$ for the ladder
$\L_{B}=\{(n_1,1), (n_2,n_1+1),\cdots,(n_{t-1},n_{t-2}+1), (n, n_{t-1}+1)\}$. 

Suppose for each $i\in [t-1]$, at least one of $(i,i)$ and $(i+1,i+1)$ is in $[I({\L})]$. 
Then $\L\subseteq\L_{B}$.  It is obvious that $j_\ell\le i_\ell<j_{\ell+1}$ for $\ell\in[s-1]$. 
Hence $\L$ is DUT. 

Now assume that $\L$ is DUT.  Then every block index $(i,j)\in[I(\L)]$ is dominated by a diagonal block index $(k,k)\in [I(\L)]$, that is, $i\le k\le j$. 
If for some $i\in[t-1]$, neither $(i,i)$ nor $(i+1,i+1)$ is in $[I(\L)]$, then $(i,i+1)$ is not in $[I(\L)]$. Then $n_i\not\in\gamma_{\L}$, 
which is a contradiction.  Therefore, at least one of $(i,i)$ and $(i+1,i+1)$ is in $[I({\L})]$. 
\end{proof}

A direct application of Theorem \ref{thm:dominant-ladder} is the counting of DUT   ladder matrices.

\begin{cor}\label{thm:counting-dominant}
Let $\{F_t\}_{t=1}^\infty=\{1,1,2,3,5,\cdots\}$ be the Fibonacci sequence.
\begin{enumerate}
\item
The number   of sets of DUT ladder matrices corresponding to a  $t\times t$ block form equals to $b_{t}$, where
\begin{equation}\label{b_t}
\{b_t\}_{t=1}^\infty=\{F_{t+2}\}_{t=1}^\infty=\{2, 3, 5, 8,13,\cdots\}.
\end{equation}

\item
The number of  sets of DUT ladder matrices  in $M_{n}$ equals to $a_n$, where
\begin{equation}
\{a_n\}_{n=1}^\infty=\{F_{2n+1}\}_{n=1}^\infty=\{2, 5, 13, 34, 89, \cdots\}.
\end{equation}

\end{enumerate}
\end{cor}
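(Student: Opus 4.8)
The plan is to derive both counts from Theorem~\ref{thm:dominant-ladder} together with two classical facts about Fibonacci numbers.

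\emph{Part (1).} Fix a $t\times t$ block form, i.e.\ a partition of $[n]$ into $t$ parts. By Theorem~\ref{thm:dominant-ladder}(2), a set of DUT ladder matrices with this block form is obtained from the corresponding block upper triangular set by deleting a collection of non-consecutive diagonal blocks, and conversely every such deletion yields a DUT ladder matrix set with exactly this block form. Thus such sets are parametrized by the subsets $T\subseteq[t]$ (the indices of the zero diagonal blocks) that contain no two consecutive integers, and distinct $T$ give distinct $M_{\L}$ because $T$ is recovered from $M_{\L}$ as the set of diagonal blocks that vanish identically while all strictly-upper off-diagonal blocks survive. Hence $b_t$ is the number of subsets of $[t]$ with no two consecutive elements; conditioning on whether $t\in T$ gives the recursion $b_t=b_{t-1}+b_{t-2}$ with $b_1=2$, $b_2=3$, so $b_t=F_{t+2}$.

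\emph{Part (2).} A set of DUT ladder matrices $M_{\L}\subseteq M_n$ carries its partition $\gamma_{\L}$ of $[n]$; write $t=|\gamma_{\L}|+1$. There are $\binom{n-1}{t-1}$ partitions of $[n]$ into $t$ parts, so the identity $a_n=\sum_{t=1}^{n}\binom{n-1}{t-1}b_t$ will follow once we show that the map sending a pair (partition $\gamma$ into $t$ parts, non-consecutive $T\subseteq[t]$) to the associated $M_{\L}$ is a bijection onto the DUT ladder matrix sets in $M_n$. Surjectivity is Theorem~\ref{thm:dominant-ladder}(2). For injectivity it suffices to check that deleting non-consecutive diagonal blocks from the block upper triangular set with block form $\gamma$ does not change the associated partition, i.e.\ $\gamma_{\L}=\gamma$; then $\gamma$, and hence $T$ by Part (1), is recovered from $M_{\L}$. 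This is a short pattern comparison at each interior cut point $n_i$: since every strictly-upper off-diagonal block is present while every strictly-lower one is absent, one verifies in each of the three cases $i\in T$; $i,i+1\notin T$; $i\notin T$ and $i+1\in T$ that either columns $n_i,n_i+1$ or rows $n_i,n_i+1$ have different zero-patterns, so the cut at $n_i$ is essential and $n_i\in\gamma_{\L}$; the inclusion $\gamma_{\L}\subseteq\gamma$ is clear since $M_{\L}$ was built as a union of $\gamma$-blocks. This yields $a_n=\sum_{t=1}^{n}\binom{n-1}{t-1}F_{t+2}$.

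\emph{Evaluation of the sum.} Finally I would evaluate $\sum_{t=1}^{n}\binom{n-1}{t-1}F_{t+2}=\sum_{k=0}^{n-1}\binom{n-1}{k}F_{k+3}$ using Binet's formula $F_m=(\phi^m-\psi^m)/\sqrt5$, where $\phi,\psi$ are the roots of $x^2=x+1$. The binomial theorem gives $\sum_{k=0}^{n-1}\binom{n-1}{k}F_{k+3}=\frac1{\sqrt5}(\phi^3(1+\phi)^{n-1}-\psi^3(1+\psi)^{n-1})$, and since $1+\phi=\phi^2$ and $1+\psi=\psi^2$ the right-hand side collapses to $\frac1{\sqrt5}(\phi^{2n+1}-\psi^{2n+1})=F_{2n+1}$, as claimed. (Equivalently, one can obtain the generating function $\sum_{n\ge0}a_n x^n=\frac{1-x}{1-3x+x^2}$ directly from the ``composition of $n$ with a non-consecutive set of marked parts'' description of $a_n$ and recognize it as the generating function of $\{F_{2n+1}\}$.)

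\emph{Main obstacle.} The only nonroutine step is the injectivity claim in Part (2): one must be sure that two different partitions of $[n]$ never produce the same DUT ladder matrix set, which is exactly the assertion $\gamma_{\L}=\gamma$ above. Everything else is bookkeeping with the Fibonacci recursion and Binet's formula.
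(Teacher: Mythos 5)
Your proposal is correct and follows essentially the same route as the paper: the recursion $b_t=b_{t-1}+b_{t-2}$ obtained from Theorem \ref{thm:dominant-ladder} by conditioning on one end of the block diagonal, then $a_n=\sum_{t=1}^{n}\binom{n-1}{t-1}F_{t+2}$ evaluated via Binet's formula using $1+r=r^2$. The only difference is that you make explicit the no-double-counting (injectivity) step, i.e.\ that $\gamma_{\L}$ recovers the chosen partition, which the paper's proof leaves implicit.
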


\begin{proof}
\begin{enumerate}

\item
Clearly
$b_1=2=F_3$ and $b_2=3=F_4$.
  \eqref{b_t} will be proved if  $\{b_t\}$ satisfies the same recursive formula as $\{F_{t+2}\}$ does,  that is,
\begin{equation}\label{b_t-rel}
b_t=b_{t-1}+b_{t-2}.
\end{equation}
 By Theorem \ref{thm:dominant-ladder},   $b_t$  equals to the number of ways to choose non-consecutive diagonal blocks in a given $t\times t$ block  form.  If the first diagonal block is chosen, then the second one should be skipped, and there are $b_{t-2}$ ways to choose the remaining diagonal blocks; if the first diagonal block is not chosen, then there are  $b_{t-1}$ ways  to choose the remaining diagonal blocks. Therefore, \eqref{b_t-rel} is true and \eqref{b_t} is proved.

\item
Given $t\in [n]$, there are $\binom{n-1}{t-1}$ ways to partition matrices in $M_n$ into a $t\times t$ block   form; each block form corresponds to
$b_t=F_{t+2}$  sets of DUT ladder matrices.
Let  $r_1:=\frac{1+\sqrt{5}}{2}$ and $r_2:=\frac{1-\sqrt{5}}{2}$ be the roots of $x^2-x-1=0$.
The Binet's Fibonacci number formula says that
$$
F_t=\frac{1}{\sqrt{5}}r_1^t -\frac{1}{\sqrt{5}}r_2^t.
$$
Therefore,
\begin{eqnarray*}
a_n &=& \sum_{t=1}^{n} \binom{n-1}{t-1} F_{t+2}
=
\sum_{t=1}^{n} \binom{n-1}{t-1} \left(\frac{1}{\sqrt{5}}r_1^{t+2} -\frac{1}{\sqrt{5}}r_2^{t+2} \right)
\\
&=&
\frac{1}{\sqrt{5}}\left[r_1^3(1+r_1)^{n-1}-r_2^3(1+r_2)^{n-1}\right]
=\frac{1}{\sqrt{5}}\left[r_1^3(r_1^2)^{n-1}-r_2^3(r_2^2)^{n-1}\right]
=F_{2n+1}. \qedhere
\end{eqnarray*}
\end{enumerate}
\end{proof}

We give some notations that will be used in  studying the partitioned matrices associated with $M_{\L}$. 

\begin{df}
  Given an algebra $(M,+,*)$ and two subsets $M', M''\subseteq M$,
  define the subset
  $$M'*M'':=\left\{\sum_{i=1}^m A_i*B_i\mid m\in\N, A_i\in M', B_i\in M''\right\}.$$
\end{df}

\begin{df}\label{df:matrix}
Consider the matrices in  $M_n$ with respect to a given partition $\gamma_{\L}$. 
\begin{itemize}
\item
Let $\M_{ij}$ denote the set of all submatrices in the $(i,j)$ block of $M_n$.  
Let $E_{pq}^{[ij]}$  denote  the $(p,q)$ standard matrix in  $\M_{ij}$.
\item
Let  $\widetilde{\M_{ij}}$ denote the embedding of $\M_{ij}$ in $M_n$. 
\item
For $A\in M_n$, let $A_{ij}\in {\M}_{ij}$ denote the $(i,j)$ block submatrix of $A$.
\item
For $B_{ij}\in {\M}_{ij}$, let $\widetilde{B_{ij}}\in \widetilde{\M_{ij}}$ denote the matrix in $M_n$ with $B_{ij}$ in the $(i,j)$ block and zero elsewhere. 
Similarly for $\wt{B_{ik}B_{kj}}$ if $B_{ik}\in \M_{ik}$ and $B_{kj}\in \M_{kj}$ . 
\item
In $\M_{kk}$, let $I_{kk}$ denote the identity matrix, and $\s\l_{kk}$ the set of traceless matrices, respectively.
\end{itemize}
A notation of double index, say $\M_{ij}$, may be written as $\M_{i,j}$  for clarity purpose. 
\end{df}

The {\em normalizer} $\nor{M_{\L}}$ and the {\em centralizer} $\cen{M_{\L}}$ of Lie subalgebra $M_\L$ in $M_n$ are:
\begin{eqnarray*}
\nor{M_{\L}} &=& \{A\in M_n: [A, B]\in M_{\L} \text{ for all } B\in M_{\L}\},
\\
\cen{M_{\L}} &=& \{A\in M_n: [A,B]=0  \text{ for all } B\in M_{\L}\}.
\end{eqnarray*}
They are explcitly described by the following two lemmas. 

\begin{lem}\label{thm:normalizer}
If $\L$ is a DUT ladder, then $\nor{M_{\L}}=M_{\L_B}$, 
the subalgebra of block upper triangular matrices with respect to the partition of $\L$.
\end{lem}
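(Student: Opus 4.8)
The plan is to establish two inclusions: $\nor{M_{\L}}\supseteq M_{\L_B}$ and $\nor{M_{\L}}\subseteq M_{\L_B}$. For the first, recall that since $\L$ is DUT we have $M_{\L}\subseteq M_{\L_B}$ by Theorem \ref{thm:dominant-ladder}, and $M_{\L_B}$ is an associative subalgebra of $M_n$, hence a Lie subalgebra, so $[M_{\L_B},M_{\L_B}]\subseteq M_{\L_B}$. It remains to check that bracketing an element of $M_{\L_B}$ against an element of $M_{\L}$ lands back in $M_{\L}$, not merely in $M_{\L_B}$. Working blockwise with respect to $\gamma_{\L}$, write $A\in M_{\L_B}$ and $B\in M_{\L}$. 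The $(i,j)$ block of $[A,B]$ is $\sum_k (A_{ik}B_{kj}-B_{ik}A_{kj})$, a sum over $k$ with $i\le k\le j$. I would argue that each nonzero term forces $(i,j)\in[I(\L)]$: if $B_{kj}\ne 0$ then $(k,j)\in[I(\L)]$, so by the DUT characterization $(k,j)$ is dominated by some diagonal block index $(m,m)\in[I(\L)]$ with $k\le m\le j$; combined with $i\le k$ and the fact (Theorem \ref{thm:dominant-ladder}(1)) that all strictly-upper block indices lie in $[I(\L)]$, one deduces $(i,j)\in[I(\L)]$ unless $i=j$, and the diagonal case needs the observation that a nonzero term on the diagonal already requires $(i,i)\in[I(\L)]$. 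Symmetrically for the $B_{ik}A_{kj}$ term. This gives $M_{\L_B}\subseteq\nor{M_{\L}}$.

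For the reverse inclusion I would take $A\in\nor{M_{\L}}\subseteq M_n$ and show all of its sub-diagonal blocks vanish, i.e. $A\in M_{\L_B}$. The natural test elements are the embedded standard matrices $\wt{E^{[kk]}_{pq}}$ lying in $M_{\L}$: since $\L$ is DUT, for every $k\in[t]$ at least one of the two consecutive diagonal blocks $(k,k),(k+1,k+1)$ is nonzero, and moreover every strictly-upper block $(i,j)$, $i<j$, is nonzero. Bracketing $A$ against a carefully chosen diagonal or strictly-upper-triangular $E$-type element in $M_{\L}$ produces, in some sub-diagonal block $(i,j)$ with $i>j$, an expression of the form $A_{i,?}\cdot(\text{something})$ or $(\text{something})\cdot A_{?,j}$ that must vanish; running over enough such test elements pins down $A_{ij}=0$ for all $i>j$. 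Concretely, for a sub-diagonal block $(i,j)$ with $i>j$, I would use that the block $(j,i)$ or a neighboring strictly-upper block belongs to $[I(\L)]$ to manufacture a bracket whose $(i,i')$- or $(j',j)$-block is a nonzero multiple of an entry of $A_{ij}$ yet is forced into $M_{\L}$, hence (being strictly below the diagonal of $M_{\L_B}$) must be zero. The hypothesis $\operatorname{char}(\F)\ne 2$ is not needed here; the DUT condition is what guarantees enough room among the nonzero blocks of $M_{\L}$ to carry out this elimination.

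The main obstacle I anticipate is the bookkeeping in the reverse inclusion: because $M_{\L}$ can be missing some diagonal blocks, one cannot simply bracket against $E^{[kk]}_{11}$ for every $k$, so the choice of test elements must adapt to which diagonal blocks survive, and one must make sure that for each sub-diagonal block position $(i,j)$ there is at least one admissible bracket exposing it. This is exactly where Theorem \ref{thm:dominant-ladder}(1) — that consecutive diagonal blocks are never both absent, and that all strictly-upper blocks are present — does the work: it guarantees that for any $i>j$ either the diagonal block $(i,i)$ or $(i-1,i-1)$ (and similarly near $j$) is available, and the strictly-upper blocks $(j,i)$ etc. are available, which together suffice. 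Once the indexing is set up, the individual bracket computations are short; I would organize them as: (1) $M_{\L}\subseteq M_{\L_B}$ and $M_{\L_B}$ associative; (2) $[M_{\L_B},M_{\L}]\subseteq M_{\L}$ via the blockwise domination argument; (3) $\nor{M_{\L}}\subseteq M_{\L_B}$ via elimination of sub-diagonal blocks using diagonal/strictly-upper test elements; (4) conclude equality.
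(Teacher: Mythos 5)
Your plan is correct and follows essentially the same route as the paper's proof: the inclusion $M_{\L_B}\subseteq\nor{M_{\L}}$ via the fact that every index in $[I(\L)]$ is dominated by a diagonal index of $[I(\L)]$, and the reverse inclusion by eliminating a would-be sub-diagonal block $A_{ij}$ ($i>j$) with test elements from $\wt{\M_{j,j+1}}$ when $i>j+1$ and from whichever of $\wt{\M_{jj}}$, $\wt{\M_{ii}}$ is present when $i=j+1$, exactly as Theorem \ref{thm:dominant-ladder} guarantees. The only small caution is that bracketing against the block $(j,i)$ you mention lands the relevant product on diagonal blocks rather than strictly sub-diagonal ones, so it is the neighboring strictly-upper block and the surviving diagonal block that actually carry the elimination, which your plan already provides for.
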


\begin{proof}
We first show that $\nor{M_{\L}}\subseteq M_{\L_B}$. Suppose on the contrary, there is $A\in \nor{M_{\L}}$ such that the $(i,j)$ block $A_{ij}\ne 0$ for some $i>j$.
There are two cases:
\begin{enumerate}
\item
$i>j+1$: We have $\widetilde{\M_{j,j+1}}\subseteq M_{\L}$   by Theorem \ref{thm:dominant-ladder}.  So
$[A, \widetilde{\M_{j,j+1}}]\subseteq M_{\L}$.  However, its $(i,j+1)$ block is
$$
[A, \widetilde{\M_{j,j+1}}]_{i,j+1}=[A_{ij}, \M_{j,j+1}]=A_{ij}\M_{j,j+1}\ne \{0\},
$$
which  contradicts to the DUT assumption of $\L$.
\item
$i=j+1$: By Theorem \ref{thm:dominant-ladder}, either $\wt{\M_{jj}}\subseteq M_{\L}$ or $\wt{\M_{j+1,j+1}}\subseteq M_{\L}$.
Without loss of generality, suppose $\wt{\M_{jj}}\subseteq M_{\L}$.  Then
$[A, \wt{\M_{jj}}]\subseteq M_{\L}$. However, its $(i,j)$ block is
$$
[A, \wt{\M_{jj}}]_{ij}=A_{ij}\M_{jj}\ne\{0\},
$$
which  contradicts the DUT assumption of $\L$.
\end{enumerate}
Therefore,  $A\in M_{\L_B}$ and thus $\nor{M_{\L}}\subseteq M_{\L_B}$.

 For any $(i,j)\in [t]\times[t]$ with $i\le j$, 
the possibly nonzero blocks of matrices in $[\wt{\M_{ij}},M_{\L}]$ are those $(i,q)$ blocks with $q\ge j$ and $(p,j)$ blocks with $p\le i$, all of which belong to $M_{\L}$.  Hence $M_{\L_B}\subseteq \nor{M_{\L}}$. 
\end{proof}

\begin{lem} \label{thm:centralizer}
Let $\L$ be a DUT ladder and $t=|\gamma_{\L}|+1$.  
\begin{enumerate}
\item
If both the $(1,1)$ and the $(t,t)$ blocks of $M_{\L}$ are zero, then $\cen{M_{\L}}=\F I_n+\wt{\M_{1t}}$.
\item
Otherwise, $\cen{M_{\L}}=\F I_n.$
\end{enumerate}
\end{lem}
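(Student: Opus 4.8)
The plan is to combine Lemma~\ref{thm:normalizer} with the structural description of Theorem~\ref{thm:dominant-ladder}. Throughout, work in block form with respect to $\gamma_{\L}$, with $t=|\gamma_{\L}|+1$ as in the statement, and assume $t\ge 2$ (when $t=1$ one has $M_{\L}=M_n$, whose Lie center is $\F I_n$, i.e.\ case~(2)); recall from Theorem~\ref{thm:dominant-ladder} that $\wt{\M_{ij}}\subseteq M_{\L}$ for all $i<j$, so in particular $\wt{\M_{1t}}$ and each $\wt{\M_{i,i+1}}$ ($i\in[t-1]$) lie in $M_{\L}$. First I would pin down the shape of a centralizing matrix. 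Let $A=(A_{ij})\in\cen{M_{\L}}$; by Lemma~\ref{thm:normalizer}, $A\in\nor{M_{\L}}=M_{\L_{B}}$ is block upper triangular. For a fixed $i\in[t-1]$, expanding $[A,\wt{B}]=0$ for $B\in\M_{i,i+1}$ block by block (the $(p,l)$ block of the commutator equals $\delta_{l,i+1}A_{pi}B-\delta_{p,i}BA_{i+1,l}$) yields $A_{pi}=0$ for $p\ne i$, $A_{i+1,l}=0$ for $l\ne i+1$, and $A_{ii}B=BA_{i+1,i+1}$ for all $B\in\M_{i,i+1}$. Letting $i$ range over $[t-1]$, the first two relations annihilate every off-diagonal block of $A$ except the $(1,t)$ block, while testing the third relation on the standard matrices $E_{pq}^{[i,i+1]}$ forces $A_{kk}=\lambda_k I_{kk}$ with $\lambda_1=\cdots=\lambda_t=:\lambda$. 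Hence $A=\lambda I_n+\wt{A_{1t}}$ for some $A_{1t}\in\M_{1t}$.

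Since $\lambda I_n$ is central in $\gl(n,\F)$, it then remains to determine for which $A_{1t}$ the matrix $\wt{A_{1t}}$ belongs to $\cen{M_{\L}}$. By Theorem~\ref{thm:dominant-ladder}, $M_{\L}$ is the direct sum of $\bigoplus_{p<q}\wt{\M_{pq}}$ and $\bigoplus_{k\,:\,(k,k)\in[I(\L)]}\wt{\M_{kk}}$. A direct check shows that $\wt{A_{1t}}$ commutes with each $\wt{\M_{pq}}$, $p<q$ (both products vanish since $t>1$ and $q>p\ge 1$ prevent the blocks from composing), while $[\wt{A_{1t}},\wt{D_{kk}}]=0$ automatically unless $k\in\{1,t\}$, where it equals $-\wt{D_{11}A_{1t}}$ resp.\ $\wt{A_{1t}D_{tt}}$. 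Taking $D_{kk}$ to be the identity of its block, we conclude that $\wt{A_{1t}}\in\cen{M_{\L}}$ for every $A_{1t}$ when both the $(1,1)$ and $(t,t)$ blocks of $M_{\L}$ are zero, and is forced to be $0$ otherwise. Combined with the previous paragraph, this gives $\cen{M_{\L}}=\F I_n+\wt{\M_{1t}}$ in case~(1) and $\cen{M_{\L}}=\F I_n$ in case~(2); the reverse inclusions are immediate from the same computations.

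I do not anticipate a genuine difficulty — the statement is fundamentally a block-matrix computation — but two points deserve care. First, the bookkeeping in the first step is asymmetric between block rows and block columns, and one must check carefully that running $i$ over $[t-1]$ leaves exactly the $(1,t)$ block free and pins all diagonal blocks to a common scalar. Second, Theorem~\ref{thm:dominant-ladder} is used twice in an essential way: once to guarantee $\wt{\M_{i,i+1}}\subseteq M_{\L}$, and once to obtain the precise decomposition of $M_{\L}$ into its strictly-block-upper-triangular and block-diagonal parts. The degenerate case $t=1$ should be disposed of at the outset, as indicated.
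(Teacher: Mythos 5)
Your proposal is correct and follows essentially the same route as the paper's proof: reduce to block upper triangular form via Lemma \ref{thm:normalizer}, bracket with the superdiagonal blocks $\wt{\M_{i,i+1}}$ (available by Theorem \ref{thm:dominant-ladder}) to kill all off-diagonal blocks except $(1,t)$ and force the diagonal blocks to a common scalar, then test $\wt{\M_{1t}}$ against the identity blocks $\wt{I_{11}}$, $\wt{I_{tt}}$ to settle the two cases. The only differences are cosmetic (chaining adjacent diagonal blocks rather than using all pairs $i<j$, and the explicit $t=1$ remark), so no further changes are needed.
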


\begin{proof}
Clearly $\cen{M_{\L}}\subseteq \nor{M_{\L}}$.  The possibly nonzero blocks of any $A\in\cen{M_{\L}}$ are $A_{ij}$ for some $1\le i\le j\le t$. 
If $A_{ij}\ne 0$ and $2\le i<j$, then $\wt{\M_{i-1,i}}\subseteq M_{\L}$, and we can find $B_{i-1,i}\in \M_{i-1,i}$ such that
$$
0\ne B_{i-1,i}A_{i,j}=[B_{i-1,i},A_{i,j}]=[\wt{B_{i-1,i}},A]_{i-1,j},
$$
which contradicts to the assumption $A\in\cen{M_{\L}}$. Thus $A_{ij}=0$ for all $2\le i<j\le t$. Similarly, $A_{ij}= 0$ for all $1\le i<j\le t-1$. 
So the only possibly nonzero blocks of $A\in\cen{M_{\L}}$  are $A_{1t}$ and $A_{ii}$ for $i\in [t]$. 

If $(1,1)\in[I(\L)]$, then $0=[\wt{I_{11}},A]_{1t}=A_{1t}.$  Similarly, $(t,t)\in [I(\L)]$ implies that $A_{1t}=0$. 
If neither $(1,1)$ nor $(t,t)$ is in $[I(\L)]$, then  $\wt{\M_{1t}}$ is in $\cen{M_{\L}}$ by direct computation. 

Now for any $i,j\in[t]$ with $i<j$ and  $\wt{B_{ij}}\in\wt{\M_{ij}}\subseteq M_{\L}$,  
$$
0=[A, \wt{B_{ij}}]_{ij}=A_{ii}B_{ij}-B_{ij}A_{jj}.
$$
Let $B_{ij}$ go through all standard matrices in $\M_{ij}$ that have  an entry one and zeros elsewhere. 
We can get $A_{ii}=\lambda I_{ii}$ and $A_{jj}=\lambda I_{jj}$ for a fixed $\lambda\in\F$.

In summary,    $\cen{M_{\L}}$ is described by the statements (1) and (2).
\end{proof}

\section{The main theorem}

In this section, we explicitly characterize the derivation algebra $\Der(M_{\L})$  for any DUT ladder $\L$  over a field $\F$ with ${\rm char}(\F)\ne 2$, and provide some consequent results.
Note that the  adjoint representation   $\ad: M_n\to \Der(M_n)$ defined by $\ad A(B)=[A,B]$ induces a Lie algebra homomorphism
$$\ad(\cdot)|_{M_{\L}}: \nor{M_{\L}}/\cen{M_{\L}}\to \Der(M_{\L}),$$
which will be used in the following theorem. 

\begin{theorem}{(Main theorem)} \label{Main theorem}
Suppose ${\rm char}(\F)\ne 2$.  Let $\L$ be a DUT ladder. Then the Lie algebra $\Der(M_{\L})$ can be decomposed as  a direct sum of ideals:
\begin{eqnarray} 
\label{Der-M_L}
\Der(M_{\L})  &=& \ad(\nor{M_{\L}}/\cen{M_{\L}})|_{M_{\L}}\oplus\mathcal{D}
\\ \label{Der-M_L-2}
&=& \left.\left(\ad \left(\frac{M_{\L}}{\cen{M_{\L}}\cap M_{\L}}\right)\rtimes \bigoplus_{(k,k)\in [I(\L_{B})]-[I(\L)]} \ad \left(\wt{\M_{kk}}\right)\right)\right|_{M_{\L}}\oplus\mathcal{D}
\end{eqnarray}
where
\begin{itemize}
\item
the normalizer $\nor{M_{\L}}$ and the centralizer $\cen{M_{\L}}$ are described by  Lemmas \ref{thm:normalizer} and \ref{thm:centralizer}, respectively;
\item the ideal ${\mc D}$ is defined by
\begin{equation}\label{Der-D}
\mathcal{D}:=\{\phi\in \End(M_{\L}): \Ker\phi\supseteq [M_{\L},  M_{\L}],\ \Im\phi\subseteq \cen{M_{\L}}\cap M_{\L}\};
\end{equation}
in particular, $\mathcal{D}\simeq \Hom_{\F}\left(M_{\L}/[M_{\L}, M_{\L}],  \cen{M_{\L}}\cap M_{\L}\right)$ as vector spaces.
\end{itemize}
Explicitly,  we have the following cases  with respect to the partition $\gamma_{\L}$ of $\L$ (let $t=|\gamma_{\L}|+1$):
\begin{enumerate}
\item If  $M_\L$ is a set of block upper triangular matrices (i.e. ${\L}={\L_B}$), then  every $f\in \Der(M_{\L})$ corresponds to an $X\in M_{\L_B}/\F I_{n}$  and $c_1,\cdots,c_t\in\F$,  such that
\begin{equation}\label{Der-block-upper-triagular}
   f(A)=\ad{X}(A)+\left(\sum_{k\in [t]} c_k \tr(A_{kk})\right)I_n\qquad \text{for }\quad A\in M_{\L}.
\end{equation}
\item If $M_{\L}$ has some zero diagonal block(s), but at least one of its $(1,1)$ and $(t,t)$ blocks is nonzero,
then  every $f\in \Der(M_{\L})$ corresponds to an $X\in M_{\L_{B}}/\F I_n$, such that
\begin{equation}\label{Der-with-first-or-last}
f(A)=\ad{X}(A)\qquad \text{for}\quad  A\in M_{\L}.
\end{equation}
\item
If both the $(1,1)$ and the $(t,t)$ blocks  in $M_{\L}$ are zero, then every $f\in \Der(M_{\L})$ corresponds to an $X\in M_{\L_B}/(\F I_n+\wt{\M_{1t}})$ and
$Y_{1tk}\in \M_{1t}$ for each $(k,k)\in [I(\L)]$, such that
\begin{equation}\label{Der-no-first-and-last}
    f(A)=\ad{X}(A)+ \sum_{(k,k)\in [I(\L)]} \tr(A_{kk})\wt{Y_{1tk}}\qquad \text{for}\quad  A\in M_{\L}.
\end{equation}
\end{enumerate}
\end{theorem}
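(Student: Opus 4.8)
The plan is to directly analyze the action of an arbitrary derivation $f\in\Der(M_{\L})$ on the standard block pieces $\wt{\M_{ij}}$ for $(i,j)\in[I(\L)]$, and to reduce $f$ modulo inner derivations in successive stages until only the piece $\mathcal{D}$ remains. The first reduction: since $\wt{\M_{1,q}}$ and $\wt{\M_{p,t}}$ lie in the socle-type pieces of $M_{\L}$, I would pin down $f$ on $\wt{\M_{k,k+1}}$ (which is always contained in $M_{\L}$ by Theorem \ref{thm:dominant-ladder}) using the bracket relations $[\,\wt{E^{[k,k+1]}_{pa}},\wt{E^{[k+1,k+2]}_{aq}}\,]=\wt{E^{[k,k+2]}_{pq}}$ and the Jacobson-type argument familiar from derivations of $\mathfrak{gl}_n$: the superdiagonal block pieces generate, and $f$ on them is determined up to a matrix $X$ (block upper triangular, hence in $M_{\L_B}=\nor{M_{\L}}$ by Lemma \ref{thm:normalizer}) by requiring $f-\ad X$ to kill every $\wt{E^{[k,k+1]}_{pa}}$. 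Here the condition $\mathrm{char}(\F)\ne2$ enters exactly where one needs to split a diagonal-block action into a trace part and a traceless part, because the derivation restricted to a nonzero diagonal block $\wt{\M_{kk}}\cong\gl$ must be inner on the $\sL_{kk}$ part.

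Second, having subtracted a suitable $\ad X$ with $X\in\nor{M_{\L}}$, I would show the remaining derivation $g:=f-\ad X$ annihilates $[M_{\L},M_{\L}]$ and maps $M_{\L}$ into $\cen{M_{\L}}\cap M_{\L}$. The point is that $[M_{\L},M_{\L}]$ is spanned by $\wt{\M_{ij}}$ with $i<j$ together with the traceless parts $\wt{\sL_{kk}}$ of the nonzero diagonal blocks (this needs a small separate lemma, using that the DUT shape forces enough off-diagonal blocks to be present — again via Theorem \ref{thm:dominant-ladder}), and we have arranged $g$ to vanish there. Then for $(k,k)\in[I(\L)]$, using $g([\wt{I_{kk}},\wt{B_{ij}}])=0$ one forces $g(\wt{I_{kk}})$ to centralize $M_{\L}$, and by Lemma \ref{thm:centralizer} such an element is either a scalar multiple of $I_n$ or (in case (3)) lies in $\F I_n+\wt{\M_{1t}}$. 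Writing $g(\wt{I_{kk}})$ in terms of $\tr$ of the $k$-th block gives the explicit formulas \eqref{Der-block-upper-triagular}, \eqref{Der-with-first-or-last}, \eqref{Der-no-first-and-last}; the scalar-$I_n$ contributions that are actually realized are pinned down by the constraint that $g$ is a derivation (e.g. $g$ applied to a bracket of two diagonal idempotents must vanish), which kills the $\wt{\M_{1t}}$-valued part unless both end blocks are absent, and similarly controls the $c_k$'s in case (1).

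Third, the direct-sum/ideal structure \eqref{Der-M_L}–\eqref{Der-M_L-2}: once we know every derivation is $\ad X + \phi$ with $X\in\nor{M_{\L}}$ and $\phi\in\mathcal{D}$, I would check that $\ad(\nor{M_{\L}}/\cen{M_{\L}})|_{M_{\L}}$ and $\mathcal{D}$ are each ideals of $\Der(M_{\L})$ and intersect trivially. That $\mathcal D$ is an ideal is immediate since $[\,\delta,\phi\,]$ for $\phi\in\mathcal D$ still kills $[M_{\L},M_{\L}]$ (derivations stabilize the derived subalgebra) and still lands in the center (which every derivation stabilizes). The semidirect-product refinement in \eqref{Der-M_L-2} comes from splitting $\nor{M_{\L}}=M_{\L}+\bigoplus_{(k,k)\in[I(\L_B)]-[I(\L)]}\wt{\M_{kk}}$ as vector spaces, noting $\ad$ of the missing diagonal blocks normalizes $\ad(M_{\L}/(\cen{M_{\L}}\cap M_{\L}))$, and identifying the kernel of $\ad(\cdot)|_{M_{\L}}$ on $\nor{M_{\L}}$ with $\cen{M_{\L}}$.

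I expect the main obstacle to be the second step, and within it the bookkeeping of exactly which scalar-$I_n$ terms $f(\wt{I_{kk}})$ can carry: the delicate case analysis (1)–(3) hinges on whether $(1,1)$ and $(t,t)$ are in $[I(\L)]$, on the difference $[I(\L_B)]-[I(\L)]$ of "missing" diagonal blocks, and on showing that the would-be coefficients $c_k$ respectively $Y_{1tk}$ are unconstrained beyond what is stated — i.e. that every such choice actually yields a derivation. Verifying this realizability (the reverse inclusion) is routine but must be done carefully block-by-block; the forward direction (every derivation has this form) is where the real work lies, and it rests on the DUT combinatorics of Theorem \ref{thm:dominant-ladder} guaranteeing that superdiagonal blocks are always present so the generation argument goes through.
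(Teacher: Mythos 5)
Your overall architecture --- normalize $f$ by $\ad X$ with $X\in M_{\L_B}=\nor{M_{\L}}$, show the remainder lies in $\mathcal{D}$, and split into cases according to whether $(1,1)$ and $(t,t)$ belong to $[I(\L)]$ --- is the same as the paper's, and your remarks on the ideal structure and on realizability are unproblematic. The genuine gap is inside the reduction itself. Choosing $X$ so that $g:=f-\ad X$ kills every superdiagonal $\wt{E^{[k,k+1]}_{pa}}$, and so that the $(k,k)$-component of $g$ on $\wt{\s\l_{kk}}$ vanishes, does not yet give $g([M_{\L},M_{\L}])=0$: writing $A_{kk}=[B_{kk},C_{kk}]$, the components of $g(\wt{B_{kk}})$ and $g(\wt{C_{kk}})$ lying in block row $k$ and block column $k$ (cf.\ Lemmas \ref{thm:I_kk} and \ref{thm:A_pq}) feed into $g(\wt{A_{kk}})$ and are not controlled by the superdiagonal normalization. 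To remove them one must also choose the strictly upper blocks $X_{pq}$ of $X$ so that $\ad X$ reproduces the action of $f$ on the whole block column $p$ and block row $q$ simultaneously, \emph{including on the diagonal blocks}, and one must verify that the two determinations of $X_{pq}$ (from rows and from columns) agree; this is precisely the content of Lemmas \ref{thm:block-tran-1}, \ref{thm:block-tran-2}, \ref{thm:AB=BC} and of the inductive construction of the diagonal corrections $Y_{ii}$ in Section \ref{sect:proof}, and it is not supplied by the phrase ``Jacobson-type generation argument.''

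More decisively, the key inference of your second step --- ``$g$ annihilates $[M_{\L},M_{\L}]$, hence $g([\wt{I_{kk}},\wt{B_{ij}}])=0$ forces $g(\wt{I_{kk}})$ to centralize $M_{\L}$'' --- is false as stated. That identity only yields $[g(\wt{I_{kk}}),\wt{B_{ij}}]=-[\wt{I_{kk}},g(\wt{B_{ij}})]$, i.e.\ $g(\wt{I_{kk}})$ centralizes the derived algebra, and a derivation annihilating $[M_{\L},M_{\L}]$ need not map into $\cen{M_{\L}}$. Already for $n=2$ and $\L=\{(1,1),(2,2)\}$, so that $M_{\L}$ is the $2\times 2$ upper triangular algebra, the inner derivation $\ad E_{12}$ kills $[M_{\L},M_{\L}]=\F E_{12}$ yet has image $\F E_{12}\not\subseteq \cen{M_{\L}}=\F I_2$; moreover $\ad E_{12}$ also kills the superdiagonal block, so such a summand survives your step-one normalization untouched. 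Thus after your first reduction the remainder can still contain a nontrivial inner part supported on strictly upper blocks, and an additional adjustment of $X$ is required before the remainder lands in $\mathcal{D}$ --- in the paper this is exactly what is achieved by reading the blocks $X_{pq}$ off $f(\wt{I_{tt}})_{pt}$ and $f(\wt{I_{11}})_{1q}$ (Lemma \ref{thm:I_kk} and part (3) of the proof) together with the inductive $Y_{ii}$ corrections. Until that mechanism is built into your step one, the formulas \eqref{Der-with-first-or-last} and \eqref{Der-no-first-and-last}, and hence the case analysis (1)--(3), do not follow from your sketch.
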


A detailed proof of Theorem \ref{Main theorem} will be given in  Section \ref{sect:proof}.
The special case  $\L=\L_B$ (where $M_{\L}$ is a set of block upper triangular matrices) is included in a paper of Dengyin Wang and Qiu Yu \cite[Theorem 4.1]{WQ}.
Moreover,  Daniel Brice  obtains a  formula similar to \eqref{Der-M_L} for the  derivation algebra  of the parabolic subalgebra of a reductive Lie algebra over a $\C$-like fields or over $\R$  \cite{B}.

\begin{ex}\label{ex-main-char-2}
Theorem  \ref{Main theorem} is  not true when ${\rm char}(\F)=2$. Consider $M_{\L}=M_2$ with the basis ${\mathcal B}=\{E_{11}, E_{12}, E_{21}, E_{22}\}$. 
Define
$f\in  \End(M_{\L})$ by $f(E_{12})=E_{21}$ and
$f(E_{ij})=0$ for $(i,j)=(1,1), (2,1), (2,2)$. 
It is straightforward to verify that 
\begin{equation}\label{char-2-der}
f([E,E'])=[f(E),E']+[E,f(E')]
\end{equation}
for any $E,E'\in{\mathcal B}$, since there are only two cases that either side of \eqref{char-2-der} is nonzero:
$\{E, E'\}=\{E_{11}, E_{12}\}$ or $\{E_{12}, E_{22}\}.$
Therefore $f\in \Der(M_{\L})$.  
However,  $f$ is not an element of $\ad(\nor{M_{\L}}/\cen{M_{\L}})|_{M_{\L}}\oplus\mathcal{D}$ in \eqref{Der-M_L}.
\end{ex}

When $\L$ is an upper triangular ladder, an inner derivation $(\ad X)|_{M_{\L}}$  ($X\in M_{\L}$) satisfies that  
$$(\ad X)|_{M_{\L}}(\wt{\M_{ij}})\subseteq \wt{\M_{ij}}+\sum_{k> j} \wt{\M_{ik}}+\sum_{\ell<i} \wt{\M_{\ell j}}
\quad\text{for any}\quad \wt{\M_{ij}}\subseteq M_{\L}.
$$
So the inner derivation sends the $(i,j)$ block to a sum of blocks with the indices dominated by $(i,j)$. 
This dominance property also holds for {\em all} derivations of $M_{\L}$ when  $\L$ is  a DUT ladder with some zero diagonal blocks (i.e. $M_{\L}\ne M_{\L_B}$).

\begin{cor}\label{Cor}
Let $\L$ be a DUT ladder with some zero diagonal blocks. Then every $f\in Der(M_{\L})$ maps any
$(i,j)$ block of $M_{\L}$ to a sum of some blocks dominated by the $(i,j)$ block.
\end{cor}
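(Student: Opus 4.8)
The plan is to reduce the statement to Theorem \ref{Main theorem} and then read off the dominance property from each of the three explicit formulas for derivations. Since $\L$ is assumed DUT with some zero diagonal blocks, only cases (2) and (3) of Theorem \ref{Main theorem} can occur. First I would observe that the inner-derivation part $\ad X$ with $X\in M_{\L_B}$ already has the dominance property: for $X\in M_{\L_B}$ (block upper triangular) and $\wt{B_{ij}}\in\wt{\M_{ij}}\subseteq M_{\L}$, the product $X\wt{B_{ij}}$ has nonzero blocks only in positions $(p,j)$ with $p\le i$, and $\wt{B_{ij}}X$ has nonzero blocks only in positions $(i,q)$ with $q\ge j$; in either case such a block index is dominated by $(i,j)$ in the sense $p\le i$, $q\ge j$. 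Hence $(\ad X)(\wt{\M_{ij}})\subseteq \sum_{p\le i,\ q\ge j}\wt{\M_{pq}}$, i.e. a sum of blocks dominated by $(i,j)$. This handles case (2) entirely.

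For case (3) I would argue that the extra term also respects dominance. In case (3) the additional summand sends $A$ to $\sum_{(k,k)\in[I(\L)]}\tr(A_{kk})\wt{Y_{1tk}}$ with $Y_{1tk}\in\M_{1t}$. Applied to a single block $\wt{B_{ij}}\in\wt{\M_{ij}}$, this term vanishes unless $i=j=k$ for some diagonal block index $(k,k)\in[I(\L)]$, in which case the output lies in $\wt{\M_{1t}}$; and the $(1,t)$ block index is dominated by the diagonal index $(k,k)$ precisely because $1\le k\le t$. So again every block of $M_{\L}$ is mapped into a sum of blocks dominated by it, and adding the inner part (which also has the property, as above) preserves this. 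Therefore in both surviving cases every $f\in\Der(M_{\L})$ has the desired dominance property, which proves the corollary.

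The only mild subtlety — and the one point I would state carefully rather than gloss over — is that the dominance property is a linear condition on $f$ that needs to be checked on the spanning blocks $\wt{\M_{ij}}$ separately and then extended by linearity: since ``$f$ maps the $(i,j)$ block into a sum of blocks dominated by $(i,j)$'' is exactly a statement about the restriction $f|_{\wt{\M_{ij}}}$ for each $(i,j)\in[I(\L)]$, and each of the two terms in the formula of case (3) has this property on each such block, their sum does too. I do not anticipate a genuine obstacle here; the content of the corollary is essentially repackaging Theorem \ref{Main theorem}, and the key input is that $X$ ranges over $M_{\L_B}$, i.e. over block upper triangular matrices, so that $\ad X$ cannot create blocks below or to the left of where it started.
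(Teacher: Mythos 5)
Your proposal is correct and follows essentially the same route as the paper, which simply cites Theorem \ref{Main theorem}(2) and (3); you merely spell out the (routine) verification that both the $\ad X$ part with $X\in M_{\L_B}$ and the extra trace term in case (3) send each $(i,j)$ block into blocks dominated by $(i,j)$.
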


\begin{proof}
The corollary is a direct consequence of Theorem \ref{Main theorem}(2) and (3).
\end{proof}

In general,  Corollary \ref{Cor} may not be true if $\L$ is not a DUT ladder which can be seen via the following example.

\begin{ex}
  Suppose $\F$ is an arbitrary field. Let $n=5$ and $\L=\{(1,2), (3,4)\}$. Then $\L$ is not DUT, and $M_{\L}$ has the form:
$$\left(
\begin{array}{c|c c|c c}
0&a_{12} &a_{13}&a_{14} &a_{15}\\
\hline
0 & 0& 0& a_{24}&a_{25}\\
0 & 0& 0& a_{34}& a_{35}\\
\hline
0 & 0& 0& 0& 0\\
0 & 0 &0& 0 &0\\
\end{array}
\right), \quad a_{ij}\in \mathbb{F}.$$
So $M_{\L}$ has a basis ${\mathcal B}=\{E_{12}, E_{13}, E_{14},E_{15},E_{24}, E_{25}, E_{34}, E_{35}\}$. 
Given $a,b\in \mathbb{F}$, define $f\in \End (M_{\L})$ by
\begin{align*}
f(E_{12}):&=\left(
\begin{array}{c|c c|c c}
0&0 &0&0& 0\\
\hline
0 & 0& 0& 0& 0\\
0 & 0 &0& a& b\\
\hline
0 & 0& 0& 0& 0\\
0 & 0& 0& 0& 0\\
\end{array}
\right),\ \
f(E_{13}):=\left(
\begin{array}{c|c c|c c}
0&0 &0&0& 0\\
\hline
0 & 0 &0& a& b\\
0 & 0& 0& 0 &0\\
\hline
0 & 0 &0& 0& 0\\
0 & 0& 0& 0 &0\\
\end{array}
\right),
\end{align*}
and $f(E)=0$  for all other matrices $E$ in the basis ${\mathcal B}$. We prove that
\begin{equation}\label{counter example 1}
f([E,E'])=[f(E),E']+[E,f(E')]\quad\text{for all}\quad E,E'\in{\mathcal B},
\end{equation}
so that $f$ is a derivation of $M_{\L}$. 
On one hand, $[E, E^{'}]\in {\rm span}\{E_{14}, E_{15}\}$ and thus $f([E, E^{'}])=0$;
on the other hand, in \eqref{counter example 1},
$[f(E), E^{'}]\ne 0$ or $[E, f(E^{'})]\ne 0$   only when
$\{E, E^{'}\}=\{E_{12}, E_{13}\}$, for which
 the equality \eqref{counter example 1} is easily verified. 
 Therefore, $f\in\Der(M_{\L})$. 
 However, $f$ maps the block $\wt{\M_{12}}$ into $\wt{\M_{23}}$, where $(2,3)$ is not dominated by $(1,2)$. 
\end{ex}

An important family of ladders is that of 1-step ladders $\L=\{(i,j)\}$, where   each $M_{\L}$ realizes
$M_{pq}$ ($p,q\le n$) as a Lie subalgebra of $M_n$.  
Many 1-step ladders are DUT.  
The derivations of these $M_{\L}$ can be explicitly characterized here.

\begin{theorem}\label{thm:step 1}
Let $\L=\{(i,j)\}\subseteq [n]\times[n]$ be a $1$-step ladder of size $n$.
\begin{enumerate}
\item If $i<j $, then $M_{\L}$ is abelian and
\begin{align*}
  \Der(M_{\L})&=\End(M_{\L}).
  \end{align*}
\item If $i=n$ or $j=1$, then
\begin{align*}
\Der(M_{\L})=\ad(\nor{M_{\L}}/\cen{M_{\L}})|_{M_{\L}}.
\end{align*}
Explicitly, there are three subcases:
\begin{enumerate}
 \item If $i=n$ and $j=1$, then $M_{\L}=M_n$, and $\Der(M_{\L})=\ad(M_n/\F I_n).$
 \item If $i\ne n$ and $j=1$, then
 \begin{eqnarray*}
 M_{\L} &=& \left\{\mtx{A_{11} &A_{12}\\0 &0}\in M_n: A_{11}\in M_{i},\ A_{12}\in M_{i,n-i}\right\},
 \\
 \Der(M_{\L})  &=& \left.\ad \left\{\mtx{X_{11} &X_{12}\\0 &X_{22}}: X_{11}\in M_{i},\ X_{12}\in M_{i,n-i},\ X_{22}\in M_{n-i}\right\}\right|_{M_{\L}}
 \\
 &=& \ad M_{\L}\rtimes \ad (\wt{\M_{22}})|_{M_{\L}}.
 \end{eqnarray*}
 \item If $i=n$ and $j\ne 1$, then
 \begin{eqnarray*}
 M_{\L} &=& \left\{\mtx{0 &A_{12}\\0 &A_{22}}\in M_n: A_{12}\in M_{j-1,n-j+1},\ A_{22}\in M_{n-j+1}\right\},
 \\
 \Der(M_{\L}) &=& \left.\ad \left\{\mtx{X_{11} &X_{12}\\0 &X_{22}}: X_{11}\in M_{j-1},\ X_{12}\in M_{j-1,n-j+1},\ X_{22}\in M_{n-j+1}\right\}\right|_{M_{\L}}
 \\
 &=& \ad M_{\L}\rtimes \ad (\wt{\M_{11}})|_{M_{\L}}.
 \end{eqnarray*}

\end{enumerate}
\item If $n>i\ge j>1$. Then
\begin{align*}
\Der(M_{\L})=\ad\left(\nor{M_{\L}}/\cen{M_{\L}}\right)|_{M_{\L}}\oplus\mathcal{D}
\end{align*}
where ${\mc D}$ is defined in \eqref{Der-D}.  Explicitly, $\gamma_{\L}=\{j-1,i\}$,   and
\begin{eqnarray*}
 M_{\L} &=& \left\{A=\mtx{0 &A_{12} &A_{13}\\0 &A_{22} &A_{23}\\0 &0 &0}\right\},
 \\
 \Der(M_{\L}) &=& \left.\ad \left\{\mtx{X_{11} &X_{12} &X_{13}\\0 &X_{22} &X_{23}\\0 &0 &X_{33}}\right\}\right|_{M_{\L}}
		 \oplus  \left\{f_Y: f_Y(A)=\tr(A_{22})\mtx{0 &0 &Y\\0 &0 &0\\0 &0 &0}\right\}
 \\
 &=& \left( \ad M_{\L} \rtimes \left.\left(\ad (\wt{\M_{11}})\oplus \ad (\wt{\M_{33}})\right)\right|_{M_{\L}} \right)
 \oplus  \left\{f_Y: f_Y(A)=\tr(A_{22})\mtx{0 &0 &Y\\0 &0 &0\\0 &0 &0}\right\}.
\end{eqnarray*}
\end{enumerate}
\end{theorem}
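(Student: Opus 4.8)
The plan is to derive Theorem \ref{thm:step 1} as a corollary of the Main Theorem \ref{Main theorem} by carefully identifying the partition data $\gamma_{\L}$, the normalizer $\nor{M_{\L}}$, the centralizer $\cen{M_{\L}}$, and the ideal $\mc D$ in each of the stated regimes for the single corner point $(i,j)$. First I would dispose of the non-DUT cases by hand: part (1) is the case $i<j$, where the ladder is strictly upper triangular, $M_{\L}=\wt{\M}$ (a single off-diagonal block) is abelian, so $[M_{\L},M_{\L}]=0$ and every linear endomorphism trivially satisfies the derivation identity; hence $\Der(M_{\L})=\End(M_{\L})$. For parts (2) and (3) one has $i\ge j$, so $\L=\{(i,j)\}$ is DUT (the single-corner DUT condition $j\le i$ holds vacuously for the inter-corner inequalities), and Theorem \ref{Main theorem} applies.

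Next I would compute the partition. When $i\ge j$, Definition \ref{block matrix form} gives $\gamma_{\L}=\{i\}\cup\{j-1\}-\{0,n\}$. This collapses to the empty partition precisely when $i=n$ and $j=1$ (so $M_{\L}=M_n$, $t=1$), to a one-element partition when exactly one of $i=n$, $j=1$ holds (so $t=2$), and to the two-element partition $\{j-1,i\}$ when $1<j\le i<n$ (so $t=3$). In the $t=2$ subcase $j=1$, $i\ne n$: the blocks are $(1,1)$ of size $i$ and $(2,2)$ of size $n-i$, and $[I(\L)]=\{(1,1),(1,2)\}$, so the $(2,2)$ diagonal block is the one zero diagonal block; since the $(1,1)$ block is nonzero, Lemma \ref{thm:centralizer}(2) gives $\cen{M_{\L}}=\F I_n$, and Lemma \ref{thm:normalizer} gives $\nor{M_{\L}}=M_{\L_B}$, the full block upper triangular algebra. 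Then case (2) of Theorem \ref{Main theorem} applies verbatim (some zero diagonal block, but the $(1,1)$ block nonzero), giving $\Der(M_{\L})=\ad(M_{\L_B}/\F I_n)|_{M_{\L}}$; the decomposition $\ad M_{\L}\rtimes\ad(\wt{\M_{22}})|_{M_{\L}}$ is exactly formula \eqref{Der-M_L-2} with the single index $(2,2)\in[I(\L_B)]-[I(\L)]$. The subcase $i=n$, $j\ne 1$ is symmetric (the unique zero diagonal block is $(1,1)$, the $(t,t)$ block is nonzero), and the subcase $i=n$, $j=1$ is just $\Der(M_n)=\ad(M_n/\F I_n)$, the classical fact recovered from case (1) of the theorem with $t=1$.

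For part (3), with $1<j\le i<n$, we have $t=3$, blocks of sizes $j-1$, $i-j+1$, $n-i$, and $[I(\L)]$ consists of the indices dominated by $(2,2)$, namely $(1,2),(1,3),(2,2),(2,3)$ — so both the $(1,1)$ and the $(3,3)=(t,t)$ diagonal blocks are zero. By Lemma \ref{thm:centralizer}(1), $\cen{M_{\L}}=\F I_n+\wt{\M_{13}}$, so $\cen{M_{\L}}\cap M_{\L}=\wt{\M_{13}}$ (note $I_n\notin M_{\L}$), and $M_{\L}/[M_{\L},M_{\L}]$ is one-dimensional, spanned by the class of $I_{22}$ embedded in the $(2,2)$ block, because $[M_{\L},M_{\L}]$ contains $\wt{\M_{12}},\wt{\M_{23}},\wt{\M_{13}}$ and $\s\l_{22}$ (a straightforward bracket computation using $E_{12}$-type and $E_{23}$-type elements, valid since $\mathrm{char}(\F)\ne 2$). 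Hence $\mc D\simeq\Hom_{\F}(M_{\L}/[M_{\L},M_{\L}],\wt{\M_{13}})\cong\wt{\M_{13}}$, which is exactly the family $\{f_Y\}$ with $f_Y(A)=\tr(A_{22})\wt{Y}$, $Y\in\M_{13}$. Meanwhile $\nor{M_{\L}}=M_{\L_B}$ by Lemma \ref{thm:normalizer}, and the two extra diagonal indices in $[I(\L_B)]-[I(\L)]$ are $(1,1)$ and $(3,3)$, so \eqref{Der-M_L-2} reads $\ad(M_{\L}/\wt{\M_{13}})\rtimes(\ad(\wt{\M_{11}})\oplus\ad(\wt{\M_{33}}))|_{M_{\L}}$, matching the asserted formula; and since case (3) of Theorem \ref{Main theorem} gives $f(A)=\ad X(A)+\sum_{(k,k)\in[I(\L)]}\tr(A_{kk})\wt{Y_{13k}}$ with the only relevant $k$ being $k=2$, we recover the displayed $\Der(M_{\L})$ precisely.

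I do not anticipate a serious obstacle: the whole argument is bookkeeping, translating the general partition-level statements of Theorem \ref{Main theorem}, Lemma \ref{thm:normalizer}, and Lemma \ref{thm:centralizer} into the explicit $2$- and $3$-block pictures. The one point requiring a little care is the identification of $[M_{\L},M_{\L}]$ and hence of $M_{\L}/[M_{\L},M_{\L}]$ in part (3) — in particular checking that the $(2,2)$ diagonal block contributes only its trace to the quotient, which is where the $\mathrm{char}(\F)\ne2$ hypothesis (inherited from Theorem \ref{Main theorem}) is used — and, in part (1), being explicit that ``abelian'' forces $\Der=\End$ with no characteristic assumption needed.
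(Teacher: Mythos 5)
Your proposal follows essentially the same route as the paper: the paper's own proof simply observes that cases (2) and (3) are immediate from Theorem \ref{Main theorem} and handles the abelian case (1) directly, and your more detailed bookkeeping of $\gamma_{\L}$, $[I(\L)]$, $\nor{M_{\L}}$, $\cen{M_{\L}}$ and $\mathcal D$ in the $t=2$ and $t=3$ pictures is the correct way to make that invocation explicit; parts (1), (2)(b), (2)(c), (3) all check out.

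One step does not close as written, namely subcase (2)(a). You justify $\Der(M_n)=\ad(M_n/\F I_n)$ by ``case (1) of Theorem \ref{Main theorem} with $t=1$,'' but that case gives $f(A)=\ad X(A)+c\,\tr(A)I_n$, i.e.\ $\Der(M_n)=\ad(M_n/\F I_n)\oplus\F\{\,A\mapsto\tr(A)I_n\,\}$, and the trace map is genuinely outer (an inner derivation has image in $\s\l_n$). So the cited result produces a strictly larger algebra than the equality you (and the theorem statement) assert; this tension is already present in the paper between Theorem \ref{Main theorem}(1) and part (2)(a) here, but your write-up should either flag it or restate (2)(a) with the extra one-dimensional $\mathcal D$-summand rather than claim it is ``recovered'' from case (1). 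Two smaller cosmetic points: in part (3) the identification $[M_{\L},M_{\L}]=\wt{\M_{12}}+\wt{\M_{13}}+\wt{\M_{23}}+\wt{\s\l_{22}}$ needs no characteristic hypothesis (it follows from $E_{ab}=[E_{ac},E_{cb}]$ and $E_{aa}-E_{bb}=[E_{ab},E_{ba}]$; the hypothesis ${\rm char}(\F)\ne 2$ is used only because Theorem \ref{Main theorem} requires it), and the quotient $M_{\L}/[M_{\L},M_{\L}]$ should be described as spanned by the class of $E^{[22]}_{11}$ rather than of $I_{22}$, since $I_{22}$ lies in $\wt{\s\l_{22}}$ when ${\rm char}(\F)$ divides the size of the $(2,2)$ block.
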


\begin{proof}
The cases (2) and (3) are done by Theorem \ref{Main theorem}. For case (1) where $M_{\L}$ is abelian, every $f\in\End(M_{\L})$ satisfies that
$$f([A,B])=0=[f(A),B]+[A,f(B)],\qquad A, B\in M_{\L}.$$
Therefore, $\Der(M_{\L})=\End(M_{\L})$.
\end{proof}


\section{Proof of the main theorem}\label{sect:proof}

To prove Theorem \ref{Main theorem}, we give several auxiliary results here.
The first two lemmas below connect the linear transformations within the four blocks of a $2\times 2$ block matrix:
$$
\bbordermatrix{~ & p & q \cr
              m & M_{mp} & M_{mq}  \cr
              n & M_{np} & M_{nq} \cr}
$$
Let $\E{ij}{mn}$ denote the $(i,j)$ standard matrix in $M_{mn}$.

\begin{lemma}\label{thm:block-tran-1}
 Suppose $\F$ is an arbitrary field. If linear transformations $\phi:M_{mp}\to M_{mq}$ and $\varphi: M_{np}\to M_{nq}$ satisfy that
\begin{equation}\label{block-tran-eq1}
\phi(AB)=A\varphi(B)\qquad\text{for all}\quad A\in M_{m n},\ \ B\in M_{n p},
\end{equation}
then  there is $X\in M_{pq}$ such that
$\phi(C)=CX$ for $C\in M_{mp}$ and $\varphi(D)=DX$ for $D\in M_{np}$.
\end{lemma}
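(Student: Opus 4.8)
The plan is to extract the matrix $X$ by testing the hypothesis \eqref{block-tran-eq1} on the standard matrix units, and then to verify that this single $X$ works simultaneously for both $\phi$ and $\varphi$. First I would fix a row index: take $A=\E{11}{mn}$, the $(1,1)$ matrix unit in $M_{mn}$. Then for any $B\in M_{np}$, the product $AB$ is the matrix in $M_{mp}$ whose first row is the first row of $B$ and whose other rows vanish, and \eqref{block-tran-eq1} reads $\phi(AB)=\E{11}{mn}\varphi(B)$, i.e. the first row of $\phi(AB)$ equals the first row of $\varphi(B)$ and the remaining rows vanish. More usefully, I would specialize $B=\E{kl}{np}$ for $k\in[n]$, $l\in[p]$: then $\E{11}{mn}\E{kl}{np}=\delta_{1k}\E{1l}{mp}$, so for $k=1$ we get $\phi(\E{1l}{mp})=\E{11}{mn}\varphi(\E{1l}{np})$, which says that $\phi(\E{1l}{mp})$ has its first row equal to the first row of $\varphi(\E{1l}{np})$ and all other rows zero.

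Next I would vary the row. For $A=\E{r1}{mn}$ and $B=\E{1l}{np}$ we have $AB=\E{rl}{mp}$, while $\E{r1}{mn}\varphi(\E{1l}{np})$ is the matrix whose $r$-th row is the first row of $\varphi(\E{1l}{np})$ and whose other rows vanish. Hence $\phi(\E{rl}{mp})$ is obtained from $\phi(\E{1l}{mp})$ by moving its (only nonzero) first row down to row $r$. In particular the first row of $\phi(\E{1l}{mp})$ — call it the vector $x_l\in\F^{1\times q}$ — determines $\phi$ completely on all matrix units: $\phi(\E{rl}{mp})=\E{r1}{mm}(x_l)$, the matrix with $x_l$ in row $r$ and zeros elsewhere. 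Assembling the rows $x_1,\dots,x_p$ into a matrix $X\in M_{pq}$ (with $x_l$ as its $l$-th row), one checks directly that $\E{rl}{mp}X$ is exactly this matrix, so $\phi(\E{rl}{mp})=\E{rl}{mp}X$ for all $r,l$, and by linearity $\phi(C)=CX$ for all $C\in M_{mp}$.

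Finally I would run the same argument for $\varphi$ to get a matrix $X'\in M_{pq}$ with $\varphi(D)=DX'$ for all $D\in M_{np}$, and then show $X=X'$. Plugging both formulas back into \eqref{block-tran-eq1} gives $ABX = \phi(AB) = A\varphi(B) = ABX'$ for all $A\in M_{mn}$, $B\in M_{np}$; choosing $A,B$ so that $AB$ ranges over all of $M_{mp}$ (e.g. $A=\E{r1}{mn}$, $B=\E{1l}{np}$ yields $AB=\E{rl}{mp}$) forces $\E{rl}{mp}(X-X')=0$ for every $r,l$, hence $X=X'$. (Alternatively, once $\phi$ is pinned down by $X$, re-use the relation $\phi(\E{1l}{mp})=\E{11}{mn}\varphi(\E{1l}{np})$ from the first step: its first row is the $l$-th row of $X$ and also the first row of $\varphi(\E{1l}{np})=\E{1l}{np}X'$, namely the $l$-th row of $X'$, so the rows agree and $X=X'$.) This completes the proof.

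I do not anticipate a genuine obstacle here; the only point requiring a little care is the bookkeeping of which rows move where under left multiplication by a matrix unit, and making sure the \emph{same} $X$ serves both $\phi$ and $\varphi$ — which is exactly what the hypothesis, rather than a separate argument for each map, delivers. One should also note the degenerate cases ($m=0$, $n=0$, or $p=0$) are trivial since the relevant spaces collapse.
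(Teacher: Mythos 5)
Your proposal is correct and uses essentially the paper's own mechanism: both proofs extract $X$ by hitting the hypothesis \eqref{block-tran-eq1} with matrix units ($A=\E{1j}{mn}$, $\E{r1}{mn}$) and tracking which row goes where, the paper merely running the deduction in the opposite order (it first gets $\varphi(B)=BX$ from $\E{1j}{mn}\varphi(B)=\E{1j}{mn}BX$ for all $j$, then $\phi(C)=CX$ because the products $AB$ span $M_{mp}$). The only loose phrase is ``run the same argument for $\varphi$''---$\varphi$ is never applied to a product in \eqref{block-tran-eq1}, so there is no literal same argument---but your closing display $A\varphi(B)=\phi(AB)=ABX$ already repairs this: taking $A=\E{1j}{mn}$ shows row $j$ of $\varphi(B)$ equals row $j$ of $BX$ for every $j$, hence $\varphi(B)=BX$ directly, with no need for a separate $X'$.
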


\begin{proof}
For any  $j\in [n]$ and any $B\in M_{np}$,
$$\phi(\E{1j}{mn} B)=\E{1j}{mn}\varphi(B).$$
All such $\E{1j}{mn}B$ span the first row space of $M_{mp}$.   So $\phi$ maps the first row of $M_{mp}$ to the first row of $M_{mq}$.  There exists a unique $X\in M_{pq}$ such that
$$\E{1j}{mn}\varphi(B)=\phi(\E{1j}{mn}B)=\E{1j}{mn}BX,\qquad\text{for all \ } j\in[n],\ B\in M_{np}.$$
Therefore, $\varphi(B)=BX$.  Then $\phi(AB)=A\varphi(B)=ABX$ for any $A\in M_{mn}$ and $B\in M_{np}$.  Hence
$\phi(C)=CX$ for all $C\in M_{mp}$.
\end{proof}

\begin{lemma}\label{thm:block-tran-2}
 Suppose $\F$ is an arbitrary field. If linear transformations $\phi:M_{mp}\to M_{np}$ and $\varphi: M_{mq}\to M_{nq}$ satisfy that
\begin{equation}\label{block-tran-eq2}
\phi(BA)=\varphi(B)A\qquad\text{for all}\quad  A\in M_{qp},\ B\in M_{mq},
\end{equation}
then  there is $X\in M_{nm}$ such that
$\phi(C)=XC$ for $C\in M_{mp}$ and $\varphi(D)=XD$ for $D\in M_{mq}$.
\end{lemma}

The proof (omitted) is similar to that of  Lemma \ref{thm:block-tran-1}.

\begin{lemma}\label{thm:AB=BC}
Suppose $\F$ is an arbitrary field. If $X\in M_{m}$ and $Y\in M_{n}$ satisfy that $XA=AY$ for all $A\in M_{mn}$, then $X=\lambda I_{m}$ and $Y=\lambda I_n$ for certain $\lambda\in\F$.
\end{lemma}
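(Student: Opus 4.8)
The plan is to test the hypothesis $XA=AY$ on the standard matrix units $A=\E{ij}{mn}$ and read off what it forces on the entries of $X$ and $Y$. First I would record two elementary computations. For $k,i\in[m]$ and $\ell,j\in[n]$ we have $(X\,\E{ij}{mn})_{k\ell}=X_{ki}\,\delta_{j\ell}$, so $X\,\E{ij}{mn}$ is the matrix whose $j$-th column is the $i$-th column of $X$ and whose other columns vanish; likewise $(\E{ij}{mn}\,Y)_{k\ell}=\delta_{ki}\,Y_{j\ell}$, so $\E{ij}{mn}\,Y$ is the matrix whose $i$-th row is the $j$-th row of $Y$ and whose other rows vanish. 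The assumption $X\,\E{ij}{mn}=\E{ij}{mn}\,Y$ thus amounts to $X_{ki}\,\delta_{j\ell}=\delta_{ki}\,Y_{j\ell}$ for all admissible $i,k,j,\ell$.

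Next I would split into the obvious cases. Taking $\ell\neq j$ kills the left-hand side, and then choosing $k=i$ gives $Y_{j\ell}=0$; hence $Y$ is diagonal. Taking $\ell=j$ and $k\neq i$ kills the right-hand side, giving $X_{ki}=0$; hence $X$ is diagonal. Finally, taking $\ell=j$ and $k=i$ yields $X_{ii}=Y_{jj}$ for every $i\in[m]$ and every $j\in[n]$, so all the diagonal entries of $X$ and of $Y$ are equal to a single scalar $\lambda\in\F$. Therefore $X=\lambda I_m$ and $Y=\lambda I_n$, as claimed.

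There is no genuine obstacle here: the argument is a short bookkeeping computation with matrix units, valid over an arbitrary field (one needs only $m,n\ge 1$, so that at least one unit $\E{ij}{mn}$ exists to be substituted). This lemma should be viewed as the ``diagonal'' companion to Lemmas \ref{thm:block-tran-1} and \ref{thm:block-tran-2}, and it is these three lemmas together that will let us pin down the block-diagonal part of a derivation of $M_{\L}$ in the proof of Theorem \ref{Main theorem}.
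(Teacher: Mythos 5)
Your proof is correct and follows essentially the same route as the paper: substitute the matrix units $\E{ij}{mn}$ into $XA=AY$ and compare entries, concluding that the off-diagonal entries of $X$ and $Y$ vanish and all diagonal entries equal a common scalar $\lambda$. The paper's argument is the same entrywise comparison, just phrased as comparing the $(i,j)$, $(p,j)$, and $(i,q)$ entries directly rather than via the Kronecker-delta bookkeeping.
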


\begin{proof}
For any $(i,j)\in [m]\times [n]$,
$$
XE_{ij}=E_{ij}Y.
$$
Comparing the $(i,j)$ entry, we get $x_{ii}=y_{jj}$.  Comparing the $(p,j)$ entry for $p\ne i$, we get
$x_{pi}=0$. Comparing the $(i,q)$ entry for $q\ne j$, we get $0=y_{jq}$.
Therefore, $X=\lambda I_{m}$ and $Y=\lambda I_n$ for some $\lambda \in\F$.
\end{proof}

In the remaining of  this section, we assume that ${\rm char} (\F)\ne 2$,  $\L$ is a DUT ladder, and $t:=|\gamma_{\L}|+1$. 
Next we present several results on the image of a derivation of $M_{\L}$. 

\begin{lemma}\label{thm:I_kk}
For $f\in\Der(M_{\L})$ and $(k,k)\in[I(\L)]$, the $f$-images of the identity matrix and the standard matrices 
in the $(k,k)$ block satisfy that 
\begin{equation}\label{I_kk-1t-1}
f(\wt{I_{kk}}),\  f(\wt{E^{[kk]}_{\ell\ell}})\ \in\ \sum_{i=1}^{k-1}\wt{\M_{ik}}+\sum_{j=k+1}^{t} \wt{\M_{kj}} +\pr{\cen{M_{\L}}\cap M_{\L}}
\end{equation}
where (by Lemma \ref{thm:centralizer})
\begin{equation}\label{I_kk-1t-2}
\cen{M_{\L}}\cap M_{\L}=
\begin{cases}
\F I_n &\text{if $\L=\L_B$;}
\\
\wt{\M_{1t}}&\text{if $(1,1)\not\in [I(\L)]$ and $(t,t)\not\in [I(\L)]$;}
\\
0 &\text{otherwise.}
\end{cases}
\end{equation}
\end{lemma}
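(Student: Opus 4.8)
The plan is to play the adjoint action of the idempotent $\wt{I_{kk}}$, and then of each single diagonal matrix unit $\wt{E^{[kk]}_{\ell\ell}}$, against the derivation identity, exploiting that each of these $\ad(\cdot)$ is semisimple on $M_{\L}$ with eigenvalues in $\{0,1,-1\}$.

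First I would handle $f(\wt{I_{kk}})$. Set $e:=\wt{I_{kk}}$ and write $M_{\L}=\g_{-1}\oplus\g_{0}\oplus\g_{1}$ for the $\ad e$-eigenspace decomposition. Since $M_{\L}\subseteq M_{\L_{B}}$ and, by Theorem~\ref{thm:dominant-ladder}, every strictly upper block already lies in $M_{\L}$, one reads off $\g_{1}=\sum_{j>k}\wt{\M_{kj}}$ and $\g_{-1}=\sum_{i<k}\wt{\M_{ik}}$, while $\g_{0}$ is the sum of all remaining blocks; note $e\in\g_{0}$, that $[\g_{1},\g_{1}]=0=[\g_{-1},\g_{-1}]$ (the matrix products $\M_{kj}\M_{kj'}$ and $\M_{ik}\M_{i'k}$ vanish), and that $\cen{M_{\L}}\cap M_{\L}\subseteq\g_{0}$ by Lemma~\ref{thm:centralizer}. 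Writing $f(e)=a_{-1}+a_{0}+a_{1}$ with $a_{i}\in\g_{i}$: applying $f$ to $[e,v]=0$ for $v\in\g_{0}$ and comparing $\g_{0}$-components gives $[a_{0},v]=0$ (the right-hand side $-\ad e\,(f(v))$ has no $\g_{0}$-part); applying $f$ to $[e,v]=\varepsilon v$ for $v\in\g_{\varepsilon}$ ($\varepsilon=\pm1$), the $\g_{-\varepsilon}$-component forces $2\bigl(f(v)\bigr)_{-\varepsilon}=0$, hence (here $\mathrm{char}\,\F\ne2$) $f(\g_{\varepsilon})\subseteq\g_{0}+\g_{\varepsilon}$, and the $\g_{\varepsilon}$-component again gives $[a_{0},v]=0$. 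Thus $a_{0}$ centralises all of $M_{\L}$, i.e.\ $a_{0}\in\cen{M_{\L}}\cap M_{\L}$, and $f(\wt{I_{kk}})=a_{1}+a_{-1}+a_{0}$ has exactly the shape claimed in \eqref{I_kk-1t-1}.

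For $f(\wt{E^{[kk]}_{\ell\ell}})$ I would apply the same device to $e':=\wt{E^{[kk]}_{\ell\ell}}=E_{rr}$, where $r$ is the $\ell$-th index of the $k$-th block: $\ad e'$ is semisimple with eigenvalues in $\{0,\pm1\}$, its $\pm1$-eigenspaces (the off-diagonal entries of row $r$, resp.\ of column $r$) have trivial self-bracket, and $\cen{M_{\L}}\cap M_{\L}$ again sits in the $0$-eigenspace (in the $\wt{\M_{1t}}$-case $k\ne1,t$). The argument above then yields
$$
f\bigl(\wt{E^{[kk]}_{\ell\ell}}\bigr)\ \in\ (\text{off-diagonal entries of row }r)\ +\ (\text{off-diagonal entries of column }r)\ +\ \bigl(\cen{M_{\L}}\cap M_{\L}\bigr).
$$
Splitting row $r$ (and symmetrically column $r$) into its part lying in the blocks $(k,j)$ with $j>k$ (resp.\ $(i,k)$ with $i<k$) — which is already admissible — and its part lying inside the diagonal block $(k,k)$, the whole statement comes down to showing the intra-block-$(k,k)$ contribution vanishes (or is a scalar multiple of $\wt{I_{kk}}$ in the degenerate case).

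That last step is where I expect the real work to lie, and where the structural hypotheses on $\L$ beyond ``DUT'' must be used. My approach: because $M_{\L}\subseteq M_{\L_{B}}$, the $(k,k)$ block of a product equals the product of the $(k,k)$ blocks, so the projection $\pi\colon M_{\L}\to\wt{\M_{kk}}$ is a surjective Lie algebra homomorphism and $g:=\pi\circ f|_{\wt{\M_{kk}}}$ is a derivation of $\wt{\M_{kk}}\cong\gl(d_{k},\F)$. One then classifies $\Der(\gl(d_{k},\F))$ (inner derivations together with the trace derivation) and uses the interaction of $\wt{\M_{kk}}$ with an adjacent strictly upper block $\wt{\M_{kj}}$: feeding $[\wt{E^{[kk]}_{pq}},\xi]$ with $\xi\in\wt{\M_{kj}}$ through the derivation identity and projecting onto the $(k,j)$ block, Lemmas~\ref{thm:block-tran-1}, \ref{thm:block-tran-2} and \ref{thm:AB=BC} pin the inner part of $g$ down to a diagonal matrix. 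Hence $g$ sends each $\wt{E^{[kk]}_{\ell\ell}}$ to a scalar multiple of $\wt{I_{kk}}$, and combining this with the already established $f(\wt{I_{kk}})$ case removes the intra-block term, finishing the proof. The obstacle here is not a single deep idea but the careful bookkeeping across the Levi block $\wt{\M_{kk}}$ and its neighbouring strictly upper blocks.
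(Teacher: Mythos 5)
Your grading argument for $f(\wt{I_{kk}})$ is correct, and it is a genuinely different (and cleaner) route than the paper's: the paper brackets $\wt{I_{kk}}$ against $\wt{\M_{kj}}$ and $\wt{\M_{jt}}$ block by block, uses Lemma \ref{thm:AB=BC} to show all diagonal blocks of $f(\wt{I_{kk}})$ are one common scalar (killed by a zero diagonal block when $\L\ne\L_B$), and treats the $(1,t)$ block separately; you instead read everything off the $\ad\wt{I_{kk}}$-eigenspace decomposition $M_{\L}=\g_{-1}\oplus\g_0\oplus\g_1$ and show the $\g_0$-component centralizes $M_{\L}$. That half stands.

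The second half has a genuine gap, and it is not repairable: the step in which the neighbouring blocks are supposed to ``pin the inner part of $g$ down to a diagonal matrix,'' so that $g(\wt{E^{[kk]}_{\ell\ell}})\in\F\,\wt{I_{kk}}$, is false. Take $f=\ad\wt{E^{[kk]}_{12}}$, an inner derivation of $M_{\L}$ available whenever the $k$-th diagonal block has size at least $2$. Then $f(\wt{E^{[kk]}_{11}})=-\wt{E^{[kk]}_{12}}$, whose $(k,k)$-block is neither $0$ nor a scalar matrix, and $f$ satisfies every identity you can feed through the derivation property, so no amount of bookkeeping with the blocks $\wt{\M_{kj}}$ or appeal to Lemmas \ref{thm:block-tran-1}--\ref{thm:AB=BC} can force the conclusion you want. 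What your eigenspace argument does prove for $\wt{E^{[kk]}_{\ell\ell}}$ --- vanishing of all blocks outside block row $k$ and block column $k$ modulo $\cen{M_{\L}}\cap M_{\L}$ --- is the correct and the usable assertion: it is what the paper actually relies on later (in the proof of Theorem \ref{Main theorem} one explicitly has $f(\wt{A_{kk}})_{kk}=[X_{kk},A_{kk}]+\lambda_k\tr(A_{kk})I_{kk}$, which need not be scalar), and the containment \eqref{I_kk-1t-1} can only hold verbatim for $\wt{E^{[kk]}_{\ell\ell}}$ if $\wt{\M_{kk}}$ is added to its right-hand side. So the flaw in your proposal is that you pushed past the point where the literal statement is true; note the paper's own proof only carries out the $\wt{I_{kk}}$ case and dismisses the $\wt{E^{[kk]}_{\ell\ell}}$ case as ``similar.''
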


\begin{proof}
We prove \eqref{I_kk-1t-1} for $f(\wt{I_{kk}})$ here, and the case of $ f(\wt{E^{[kk]}_{\ell\ell}})$ is similar. 
\begin{enumerate}
 \item First we investigate $f(\wt{I_{kk}})_{jj}$.
When $k<j$,
\begin{eqnarray*}
f(\wt{A_{kj}})_{kj}
	&=& f([\wt{I_{kk}},\wt{A_{kj}}])_{kj}=[f(\wt{I_{kk}}),\wt{A_{kj}}]_{kj}+[\wt{I_{kk}},f(\wt{A_{kj}})]_{kj}
\\
	&=& f(\wt{I_{kk}})_{kk}A_{kj}-A_{kj}f(\wt{I_{kk}})_{jj}+f(\wt{A_{kj}})_{kj}
\end{eqnarray*}
Therefore
$$
f(\wt{I_{kk}})_{kk}A_{kj}=A_{kj}f(\wt{I_{kk}})_{jj}\qquad\text{for}\quad A_{kj}\in\M_{kj}.
$$
Lemma \ref{thm:AB=BC} implies that
$f(\wt{I_{kk}})_{kk}=\lambda I_{kk}$ and $f(\wt{I_{kk}})_{jj}=\lambda I_{jj}$ for a $\lambda\in \F$.
The same equation holds for $k>j$.
In the situation $\L\ne\L_B$,  there exists  $(p,p)\not\in[I(\L)]$, which forces $f(\wt{I_{kk}})_{pp}=0$ and thus
 $f(\wt{I_{kk}})_{jj}=0$ for all $j\in[t]$.

\item
Next we prove that  $f(\wt{I_{kk}})_{ij}=0$ for $i<j$,  $i\ne k$, $j\ne k$, and $(i,j)\ne (1,t)$.
Either $i>1$ or $j<t$. Without loss of generality, suppose $j<t$ (similarly for $i>1$).  Then
\begin{equation}\label{I_kk-A_jt}
f([\wt{I_{kk}}, \wt{A_{jt}}])_{it}=[f(\wt{I_{kk}}), \wt{A_{jt}}]_{it}+[\wt{I_{kk}}, f(\wt{A_{jt}})]_{it}.
\end{equation}
\begin{enumerate}
\item
If $k\ne t$, then \eqref{I_kk-A_jt} becomes
$0=f(\wt{I_{kk}})_{ij} A_{jt}$ for any $ A_{jt}\in\M_{jt}$.  So $f(\wt{I_{kk}})_{ij}=0$.
\item
If $k=t$, then  \eqref{I_kk-A_jt} becomes
$$
-f(\wt{A_{jt}})_{it}
=f(\wt{I_{kk}})_{ij} A_{jt}-f(\wt{A_{jt}})_{it}.
$$
Again we get $0=f(\wt{I_{kk}})_{ij} A_{jt}$ and thus $f(\wt{I_{kk}})_{ij}=0$.
\end{enumerate}

\item
Finally,
if $(1,1)\in[I(\L)]$ or $(t,t)\in[I(\L)]$, say $(1,1)\in [I(\L)]$, then for any $(k,k)\in [I(\L)]$ and $k\not\in\{1,t\}$,
$$
0=f([\wt{I_{11}},\wt{I_{kk}}])_{1t}=[f(\wt{I_{11}}),\wt{I_{kk}}]_{1t}+[\wt{I_{11}},f(\wt{I_{kk}})]_{1t}=f(\wt{I_{kk}})_{1t}.
$$
\end{enumerate}
Lemma \ref{thm:centralizer} implies   \eqref{I_kk-1t-2}.  Therefore, \eqref{I_kk-1t-1} is proved. \qedhere
\end{proof}

 For $(p,q)\in [I(\L)]$, we have
$$
\wt{\M_{pq}}\cap [M_{\L}, M_{\L}]=
\begin{cases}
\wt{\s\l_{pp}}, &\text{if $p=q$;}
\\
\wt{\M_{pq}}, &\text{if $p<q$.}
\end{cases}
$$
Next we investigate the image of derivations on each block in $[M_{\L},M_{\L}]$.

\begin{lemma} \label{thm:A_pq}
Suppose ${\rm char}(\F)\ne 2$. 
For $f\in\Der(M_{\L})$, $(p,q)\in [I(\L)]$, and $\wt{A_{pq}}\in \wt{\M_{pq}}\cap [M_{\L}, M_{\L}]$,
\begin{equation}\label{f(A_pq)}
f(\wt{A_{pq}})\in \wt{\M_{pq}}+ \sum_{i=1}^{p-1}\wt{\M_{iq}}+\sum_{j=q+1}^{t}\wt{\M_{pj}}.
\end{equation}
\end{lemma}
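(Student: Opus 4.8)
The plan is to prove Lemma \ref{thm:A_pq} by separating $\wt{A_{pq}}$ into two regimes, $p<q$ and $p=q$, and in each case writing $\wt{A_{pq}}$ (or a spanning set thereof) as an explicit Lie bracket of elements whose blocks we can already control via Lemma \ref{thm:I_kk} and the multiplicative block identities of Lemmas \ref{thm:block-tran-1}, \ref{thm:block-tran-2}, and \ref{thm:AB=BC}. The target conclusion \eqref{f(A_pq)} says precisely that $f$ sends the $(p,q)$ block into the sum of blocks $(i,j)$ with $i\le p$, $q\le j$ — i.e. the blocks dominated by $(p,q)$, possibly shifted only \emph{up and to the right}. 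So the whole argument amounts to showing that no ``bad'' block (one with $i>p$, or $j<q$, or $i<p$ with $j=q$ but $i$ too large, etc.) can appear in $f(\wt{A_{pq}})$.

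First I would handle $p<q$. Here $\wt{\M_{pq}}\cap[M_{\L},M_{\L}]=\wt{\M_{pq}}$, and every $A_{pq}\in\M_{pq}$ factors as $A_{pq}=I_{pp}A_{pq}$, so $\wt{A_{pq}}=[\wt{I_{pp}},\wt{A_{pq}}]$ when $(p,p)\in[I(\L)]$; if instead $(p,p)\notin[I(\L)]$ then by Theorem \ref{thm:dominant-ladder} we have $(p+1,p+1)\in[I(\L)]$ (and $p+1\le q$ since $p<q$), so $\wt{A_{pq}}=-[\wt{I_{p+1,p+1}},\wt{A_{pq}}]\cdot(\text{sign})$ — more carefully, pick a diagonal index $k$ with $p\le k\le q$ and $(k,k)\in[I(\L)]$; then $\wt{A_{pq}}$ lies in the span of brackets $[\wt{B},\wt{I_{kk}}]$ or can be recovered from $[\wt{I_{pp}\text{ or }I_{qq}},\wt{A_{pq}}]$. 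Apply $f$ to such a bracket identity, expand by the derivation rule, and use Lemma \ref{thm:I_kk} to replace the $f$-images of the $\wt{I_{kk}}$'s by their known form (blocks strictly above or strictly left of $k$, plus a central correction). Then extract the block $(i,j)$ of the resulting expression for a forbidden $(i,j)$ and show it must vanish, using that the ``bad'' blocks of $f(\wt{I_{kk}})$ from \eqref{I_kk-1t-1} cannot combine with $A_{pq}$ to land outside the allowed region. To nail the remaining freedom among allowed blocks, I would also bracket $\wt{A_{pq}}$ against $\wt{\M_{j,j+1}}$ (for $j\ge q$) and against $\wt{\M_{i-1,i}}$ (for $i\le p$), which by Theorem \ref{thm:dominant-ladder} lie in $M_{\L}$; comparing the $(i,j')$ blocks of $f$ of these brackets with the multiplicative structure pins down that the only surviving pieces of $f(\wt{A_{pq}})$ are in $\wt{\M_{pq}}$, $\sum_{i<p}\wt{\M_{iq}}$, and $\sum_{j>q}\wt{\M_{pj}}$.

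Second, the diagonal case $p=q$, where $\wt{\M_{pp}}\cap[M_{\L},M_{\L}]=\wt{\s\l_{pp}}$. Every traceless matrix in $\M_{pp}$ is a sum of commutators, so $\wt{A_{pp}}$ with $A_{pp}\in\s\l_{pp}$ is an $\F$-span of elements $[\wt{E^{[pp]}_{ab}},\wt{E^{[pp]}_{cd}}]$ (this is exactly where $\mathrm{char}(\F)\ne 2$ is needed — to know $\s\l_{pp}=[\gl_{pp},\gl_{pp}]$ and that the relevant elementary commutators span it even when $p$ is small, e.g. $p=1$ gives $\s\l_{11}=0$ so the statement is vacuous there, and for $p\ge 2$ the standard generators $E_{ab}$ ($a\ne b$) and $E_{aa}-E_{bb}$ work). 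Apply $f$ to such a commutator $[\wt{X},\wt{Y}]$ with $\wt{X},\wt{Y}\in\wt{\M_{pp}}\subseteq M_{\L}$, expand, and analyze $f(\wt{X})$ and $f(\wt{Y})$. Their diagonal and upper-right blocks are already handled; the new worry is a block $(i,p)$ with $i>p$ or $(p,j)$ with $j<p$ appearing in $f(\wt{X})$. Bracketing such a hypothetical block back against $\wt{Y}\in\wt{\M_{pp}}$ produces a block in the $(i,p)$ or $(p,j)$ position of $[f(\wt{X}),\wt{Y}]$ that cannot be cancelled by $f([\wt{X},\wt{Y}])$ (whose support we bound inductively or by the already-proven $p<q$ case applied to off-diagonal pieces) nor by $[\wt{X},f(\wt{Y})]$ — forcing it to be zero, again invoking Lemma \ref{thm:block-tran-1}/\ref{thm:block-tran-2} to get the vanishing from an identity of the form $(\text{block})\cdot A = 0$ for all $A$.

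I expect the main obstacle to be the diagonal case $p=q$ and, within it, the bookkeeping of which brackets isolate which forbidden block — i.e. verifying that for each ``bad'' target block $(i,j)$ there is a test element $\wt{B}\in M_{\L}$ (guaranteed to lie in $M_{\L}$ by the DUT characterization, Theorem \ref{thm:dominant-ladder}) such that the $(i',j')$ block of $f([\wt{A_{pp}},\wt{B}])$, expanded by the Leibniz rule, yields a nontrivial multiplicative constraint killing the bad block. The off-diagonal case $p<q$ is more mechanical because $\wt{A_{pq}}$ is itself a single clean bracket $[\wt{I_{kk}},\wt{A_{pq}}]$; the delicate point there is only the sub-case $k\in\{1,t\}$ (where $f(\wt{I_{kk}})$ may have a central component landing in $\wt{\M_{1t}}$, but $(1,t)$ is dominated by $(p,q)$ so it is harmless), and the sub-case where no diagonal index strictly between $p$ and $q$ lies in $[I(\L)]$, which by Theorem \ref{thm:dominant-ladder}(1) cannot actually happen for a gap of length $\ge 2$, so $k=p$ or $k=q$ always works.
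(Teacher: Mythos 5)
Your diagonal case is workable (indeed simpler than you make it: once you write $A_{pp}\in\s\l_{pp}$ as a sum of brackets of elements of $\wt{\M_{pp}}\subseteq M_{\L}$, the Leibniz expansion $[f(\wt{B_{pp}}),\wt{C_{pp}}]+[\wt{B_{pp}},f(\wt{C_{pp}})]$ automatically lies in block row $p$ and block column $p$, because $f(\wt{B_{pp}}),f(\wt{C_{pp}})\in M_{\L}\subseteq M_{\L_B}$ are block upper triangular; no test-element or cancellation argument is needed). The genuine gap is in the off-diagonal case. Your plan rests on the claim that $\wt{A_{pq}}$ is ``a single clean bracket $[\wt{I_{kk}},\wt{A_{pq}}]$'' with ``$k=p$ or $k=q$ always works.'' That is false: the bracket $[\wt{I_{kk}},\wt{A_{pq}}]$ vanishes for $p<k<q$, and for a DUT ladder \emph{every} strictly upper block $(p,q)$ lies in $[I(\L)]$ while the dominating diagonal block may be strictly interior. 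In Example \ref{ex:ladder-size7-11-43-55}, the block $(2,5)$ is in $[I(\L)]$ but neither $(2,2)$ nor $(5,5)$ is (only $(1,1),(3,3),(4,4)$ are), so no identity $\wt{A_{25}}=\pm[\wt{I_{kk}},\wt{A_{25}}]$ with $\wt{I_{kk}}\in M_{\L}$ exists. Moreover, even when one endpoint is available, say $(p,p)\in[I(\L)]$ but $(q,q)\notin[I(\L)]$ and $q-p\ge 2$, the identity $f(\wt{A_{pq}})=[f(\wt{I_{pp}}),\wt{A_{pq}}]+[\wt{I_{pp}},f(\wt{A_{pq}})]$ is vacuous exactly on the forbidden blocks $(p,j)$ with $p<j<q$: writing $F=f(\wt{A_{pq}})$, the $(p,j)$ block of the right-hand side is $0+F_{pj}$, so nothing forces $F_{pj}=0$ there (symmetrically, using $k=q$ leaves the blocks $(i,q)$ with $p<i<q$ unconstrained).

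The missing idea is the paper's induction on the gap $q-p$. The base case $q=p+1$ is precisely where Theorem \ref{thm:dominant-ladder} guarantees $k=p$ or $k=p+1$ is usable, and where ${\rm char}(\F)\ne 2$ enters (the $(i,p)$ blocks appear doubled after rearranging, giving $2F_{ip}=0$); note this, rather than the diagonal case, is the real use of the characteristic hypothesis. For $q-p\ge 2$ one uses $\wt{\M_{pq}}=[\wt{\M_{p,p+1}},\wt{\M_{p+1,q}}]$ and the induction hypothesis on both factors, whose Leibniz expansion then lands in the allowed positions. Your auxiliary brackets of $\wt{A_{pq}}$ against $\wt{\M_{j,j+1}}$ do not repair the hole as stated, because they introduce the unknown blocks of $f(\wt{\M_{j,j+1}})$ and thus implicitly require the same induction on shorter blocks; without that induction the argument does not close.
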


\begin{proof}
There are two cases for $(p,q)\in [I(\L)]$:
\begin{enumerate}
\item $p=q$: Then $ \wt{\M_{pq}}\cap [M_{\L}, M_{\L}]=\wt{\s\l_{pp}}=[\wt{\s\l_{pp}},\wt{\s\l_{pp}}]$. For $B_{pp}, C_{pp}\in\s\l_{pp}$,
\begin{equation}\label{diag-block-image}
f([\wt{B_{pp}},\wt{C_{pp}}])=[f(\wt{B_{pp}}),\wt{C_{pp}}]+[\wt{B_{pp}},f(\wt{C_{pp}})].
\end{equation}
Since $f(\wt{B_{pp}})$ and $f(\wt{C_{pp}})$ are block upper triangular matrices with respect to $\gamma_{\L}$, the nonzero $(i,j)$ blocks of 
the right side of \eqref{diag-block-image} satisfy that $p=i\le j$ or $i\le j=p$. 
Thus \eqref{f(A_pq)} holds in this case.

\item $p<q$: Then $ \wt{\M_{pq}}\cap [M_{\L}, M_{\L}]=\wt{\M_{pq}}$.  Let $q=p+k$ and we prove \eqref{f(A_pq)} by induction on $k$.
For better display, we also use $\{\cdot\}_{ij}$ here to denote the embedding of $\M_{ij}$ to $\wt{\M_{ij}}\subseteq M_n$.
\begin{enumerate}
\item  $k=1$:
By Theorem \ref{thm:dominant-ladder}, at least one of $(p,p)$ and $(p+1,p+1)$ is in $[I(\L)]$.  Without lost of generality, suppose
$(p,p)\in [I(\L)]$.
Then for $A_{p,p+1}\in \M_{p,p+1}$,
\begin{eqnarray*}
 f(\wt{A_{p,p+1}})
 &=& f([\wt{I_{pp}},\wt{A_{p,p+1}}])
\\
	&=& [f(\wt{I_{pp}}),\wt{A_{p,p+1}}]+[\wt{I_{pp}},f(\wt{A_{p,p+1}})]
\\
	& 
	{=}&
	\sum_{i=1}^{p-1}
	\vpr{ f(\wt{I_{pp}})_{ip}A_{p,p+1} }_{i,p+1}+\sum_{j=p+1}^{t} \vpr{ f(\wt{A_{p,p+1}})_{pj} }_{pj} -\sum_{i=1}^{p-1}  \vpr{ f(\wt{A_{p,p+1}})_{ip} }_{ip}
\end{eqnarray*}
where the last equality is given by Lemma \ref{thm:I_kk}.
Therefore,
$$
f(\wt{A_{p,p+1}}) +\sum_{i=1}^{p-1}  \vpr{ f(\wt{A_{p,p+1}})_{ip} }_{ip}
=\sum_{i=1}^{p-1}
	\vpr{ f(\wt{I_{pp}})_{ip}A_{p,p+1} }_{i,p+1}+\sum_{j=p+1}^{t} \vpr{ f(\wt{A_{p,p+1}})_{pj} }_{pj}
$$
One one hand, as ${\rm char}(\F)\ne 2$, the nonzero blocks on the left side of the above equality are those of $f(\wt{A_{p,p+1}})$; 
on the other hand,  the right side of this equality has nonzero $(i,j)$ blocks only  for 
$1\le i\le p-1<p+1=j$ or $i=p<p+1\le j\le t$. 
So $k=1$ is done.
\item
$k=\ell$: Suppose the statement is true for all $k<\ell $ where $\ell\ge 2$.
Now $\wt{M_{p,p+\ell}}=[\wt{M_{p,p+1}},\wt{M_{p+1,p+\ell}}],
$
and
\begin{equation*}
  f([\wt{B_{p,p+1}},\wt{C_{p+1,p+\ell}}])
  =
  [f(\wt{B_{p,p+1}}),\wt{C_{p+1,p+\ell}}]+[\wt{B_{p,p+1}},f(\wt{C_{p+1,p+\ell}})]
\end{equation*}
By induction hypothesis,  $f(\wt{B_{p,p+1}})$ has nonzero blocks only on the $p$ block row and the $(p+1)$ block column,
so that the nonzero blocks of $[f(\wt{B_{p,p+1}}),\wt{C_{p+1,p+\ell}}]$ only locate on the $p$ block row and the $(p+\ell)$ block row.
Similarly for $[\wt{B_{p,p+1}},f(\wt{C_{p+1,p+\ell}})]$. 
So \eqref{f(A_pq)}  is true for $k=\ell$.
\item
Overall,  \eqref{f(A_pq)} is verified for all the cases. \qedhere
\end{enumerate}

\end{enumerate}

\end{proof}

Now we are ready to prove  Theorem \ref{Main theorem}.
The basic idea is to explore what remain in $\Der(M_{\L})$ after factoring out $\ad \left(\nor{M_{\L}}\right)|_{M_{\L}}=\ad \left(M_{\L_B}\right)|_{M_{\L}}$.
Given $X\in M_{\L_B}$, $A\in M_{\L}$,
\begin{equation*}
\ad X(A) 
=\sum_{1\le p\le q\le t} \sum_{(i,j)\in [I(\L)]} [\wt{X_{pq}}, \wt{A_{ij}}].
\end{equation*}
A summand $[\wt{X_{pq}}, \wt{A_{ij}}]$ is nonzero only if  $i=q$ or $p=j$. In other words,
$\ad \wt{X_{pq}}$ has nonzero action only on  the $q$ block row or the $p$ block column of $A$.
It motivates us to investigate the relationship of $f(\wt{A_{ip}})$ and $f(\wt{A_{qj}})$ for given $f\in\Der(M_{\L})$ and  $1\le p\le q\le t$.

\begin{proof}[Proof of Theorem \ref{Main theorem}]
\text{}

\begin{enumerate}
\item
If  $M_\L$ is a set of block upper triangular matrices (i.e. ${\L}={\L_B}$),  by \cite[Theorem 4.1]{WQ} and the assumption ${\rm char}(\F)\ne 2$, every $f\in \Der(M_{\L})$ corresponds to
 $X\in M_{\L}$ and $\mu\in {M_{\L}}^*$ such that
$$
f(A)=\ad X(A)+\mu(A) I_n.
$$
Then $\mu([M_{\L},M_{\L}])=0$ by derivation property.  All $\wt{\M_{ij}}$ with $i<j$ are in $[M_{\L}, M_{\L}]$. So  $\mu(A)=\sum_{k\in[t]}\mu (\wt{A_{kk}}).$ Recall that   the $(p,q)$ standard matrix in  $\M_{ij}$ is denoted by $E_{pq}^{[ij]}$. 
Given $k\in [t]$, we have $\wt{A_{kk}}-\tr(A_{kk})\wt{E_{11}^{[kk]}}\in [M_{\L}, M_{\L}]$ so that
$$
\mu\pr{\wt{A_{kk}}}=\tr(A_{kk})\mu\pr{\wt{E_{11}^{[kk]}}}.
$$
Denote  $c_k=\mu\pr{\wt{E_{11}^{[kk]}}}$.  Then
$$
f(A)=\ad X(A)+\left(\sum_{k\in[t]} c_k\tr(A_{kk}) \right) I_n.
$$
This is  \eqref{Der-block-upper-triagular}.  The formulae \eqref{Der-M_L} and \eqref{Der-M_L-2} for $\L=\L_B$ immediately follow.

\item
{\em In the remaining of the proof, we assume   $\L\ne \L_B$, so that $M_{\L}$ has at least one zero diagonal block with respect to the partition $\gamma_{\L}$.}

Suppose $(k,k)\in [I(\L)]$.  For any $A_{kk},B_{kk}\in\M_{kk}$,
\begin{eqnarray*}
f([\wt{A_{kk}}, \wt{B_{kk}}])_{kk}
=
 [f(\wt{A_{kk}})_{kk}, B_{kk}]+[A_{kk}, f(\wt{B_{kk}})_{kk}].
\end{eqnarray*}
So $f(\wt{\cdot})_{kk}:\M_{kk}\to\M_{kk}$ is a derivation of $\M_{kk}$.  Since ${\rm char}(\F)\ne 2$,
according to \cite[Corollary 5.1]{WQ} \footnote{$\Der\left(\gl(m,\F)\right)$ has additional elements when $\rm{char}(\F)=2$ and $m=2$ \cite[Corollary 5.1]{WQ}.},
there is $X_{kk}\in\M_{kk}$ and $\lambda_k\in\F$ such that
$$f(\wt{A_{kk}})_{kk}=[X_{kk}, A_{kk}]+\lambda_k  \tr (A_{kk}) I_{kk} \qquad\text{for \ }A_{kk}\in\M_{kk}.$$
We prove that $\lambda_k =0$  for all $k$.
Recall that $E_{pq}^{[ij]}$  denotes the $(p,q)$ standard matrix in  $\M_{ij}$. 
On one hand, the $(1,1)$ entry of
$$f(\wt{E^{[kk]}_{11}})_{kk}=[X_{kk}, E^{[kk]}_{11}]+\lambda_k I_{kk}$$
equals to $\lambda_k$.
On the other hand, for any $\ell\in[t]$ with $\ell> k$, 
     \begin{eqnarray*}
    f(\wt{E_{11}^{[k\ell]}})_{k \ell} &=& f([\wt{E^{[kk]}_{11}}, \wt{E_{11}^{[k\ell]}}])_{k \ell}=[f(\wt{E^{[kk]}_{11}}), \wt{E_{11}^{[k\ell]}}]_{k \ell}+[\wt{E^{[kk]}_{11}}, f(\wt{E_{11}^{[k\ell]}})]_{k \ell}\\
    &=& f(\wt{E^{[kk]}_{11}})_{kk} E_{11}^{[k\ell]} - E_{11}^{[k\ell]} f({\wt{E^{[kk]}_{11}}})_{ \ell \ell}+ E^{[kk]}_{11} f(\wt{E_{11}^{[k\ell]}})_{k \ell}.
    \end{eqnarray*}
Therefore,
$$
 f(\wt{E^{[kk]}_{11}})_{kk} E_{11}^{[k\ell]}= (I_{kk} - E^{[kk]}_{11}) f(\wt{E_{11}^{[k\ell]}})_{k \ell}
 + E_{11}^{[k\ell]} f({\wt{E^{[kk]}_{11}}})_{ \ell \ell} .
$$
Comparing the $(1,1)$ entry of both sides, we see that
 the $(1,1)$ entries of $f(\wt{E^{[kk]}_{11}})_{kk}$  and $f(\wt{E^{[kk]}_{11}})_{\ell\ell}$ are equal.
 The same result holds for $\ell<k$.
By assumption $\L \neq \L_{B}$.  So there exists $(\ell,\ell)\not \in [I(\L)]$, where $f(\wt{E^{[kk]}_{11}})_{\ell\ell}=0$.  Hence $\lambda_k=0$. Overall, for any $(k,k)\in [I(\L)]$, 
there exists $X_{kk}\in\M_{kk}$ such that
$$f(\wt{A_{kk}})_{kk}=[X_{kk}, A_{kk}] \qquad\text{for all \ }A_{kk}\in\M_{kk}.$$

\item  Given $p, q\in [t]$ and $p<q$, we claim that there exists  $X_{pq}\in\M_{pq}$ such that
\begin{eqnarray}
\label{A_ip}
\wt{f(\wt{A_{ip}})_{iq}}=\ad\wt{X_{pq}} (\wt{A_{ip}}),&& \text{for  any \ } (i,p)\in [I(\L)],\quad\text{and}
\\
\label{A_qj}
\wt{f(\wt{A_{qj}})_{pj}}=\ad\wt{X_{pq}}(\wt{A_{qj}}),&& \text{for  any \ } (q,j)\in [I(\L)].
\end{eqnarray}

There are several situations:
\begin{enumerate}

\item Suppose $(q,j)=(t,t)\in [I(\L)]$. For any $A_{tt}, B_{tt}\in \M_{tt}$,
\begin{align*}
f([\wt{A_{tt}}, \wt{B_{tt}}])_{pt}&=[f(\wt{A_{tt}}), \wt{B_{tt}}]_{pt}+[\wt{A_{tt}}, f(\wt{B_{tt}})]_{pt}=f(\wt{A_{tt}})_{pt}B_{tt}-f(\wt{B_{tt}})_{pt}A_{tt}.
\end{align*}
Set $B_{tt}=I_{tt}$.  Then $f(\wt{A_{tt}})_{pt}=f(\wt{I_{tt}})_{pt}A_{tt}$ for $A_{tt}\in \M_{tt}$. 
Denote  $X_{pt}:=f(\wt{I_{tt}})_{pt}\in \M_{pt}$.  We have $f(\wt{A_{tt}})_{pt}=X_{pt}A_{tt}$ and so
$$\wt{f(\wt{A_{tt}})_{pt}}=\ad\wt{X_{pt}}(\wt{A_{tt}})
\qquad\text{for all}\quad
A_{tt}\in \M_{tt}.
$$

\item
Suppose $(i, p)=(1,1)\in[I(\L)]$.  Similarly, let $Y_{1q}:=-f(\wt{I_{11}})_{1q}\in \M_{1q}$ then 
$$
\wt{f(\wt{A_{11}})_{1q}}=-\wt{A_{11}Y_{1q}}=\ad\wt{Y_{1q}}(\wt{A_{11}}) \qquad\text{for all}\quad A_{11}\in \M_{11}.
$$

    \item
Suppose $(q,j)\in [I(\L)]-\{(t,t)\}$.  Either $q<t$ or $j<t$. Without loss of generality, suppose $j<t$.  
Let $j':=j+1$. Then
$(j,j'), (q,j'), (p,j),(p,j')\in [I(\L)]$, and $\wt{\M_{qj'}}=\wt{\M_{qj}}\wt{\M_{jj'}}=[\wt{\M_{qj}},\wt{\M_{jj'}}]$. For any $A_{qj}\in\M_{qj}$, $A_{jj'}\in\M_{jj'}$,
$$
f(\wt{A_{qj} A_{jj'}})_{pj'}
=f([\wt{A_{qj}},\wt{A_{jj'}}])_{pj'}=[f(\wt{A_{qj}}),\wt{A_{jj'}}]_{pj'}+[\wt{A_{qj}},f(\wt{A_{jj'}})]_{pj'}
=f(\wt{A_{qj}})_{pj}A_{jj'}.
$$
Applying Lemma \ref{thm:block-tran-2} to $\phi:\M_{qj'}\to \M_{pj'}$ defined by $\phi(C):=f(\wt{C})_{pj'}$ and
$\varphi:\M_{qj}\to \M_{pj}$ defined by $\varphi(D):=f(\wt{D})_{pj}$, we can find $X_{pq}\in\M_{pq}$ such that
$f(\wt{A_{qj}})_{pj}=X_{pq}A_{qj}$  for all $A_{qj}\in\M_{qj}$, 
 and $f(\wt{A_{qj'}})_{pj'}=X_{pq}A_{qj'}$  for all $A_{qj'}\in\M_{qj'}$. 
In particular, $X_{pq}$ is independent of $j$. So 
$$\wt{f(\wt{A_{qj}})_{pj}}=\ad\wt{X_{pq}}(\wt{A_{qj}})\qquad\text{for all}\quad A_{qj}\in\M_{qj}.$$ 

\item
Suppose $(i,p)\in [I(\L)]-\{(1,1)\}$.
Either $i>1$ or $p>1$. Without loss of generality, suppose $i>1$ (similarly for $p>1$). 
Let $i':=i-1$. 
 Then $(i',i),(i',p),(i,q),(i',q)\in [I(\L)]$, and
$\wt{\M_{i'p}}=\wt{\M_{i'i}}\wt{\M_{ip}}=[\wt{\M_{i'i}},\wt{\M_{ip}}]$.
For $A_{i'i}\in \M_{i'i}$ and $A_{ip}\in\M_{ip}$,
$$
f(\wt{A_{i'i} A_{ip}})_{i'q}
=f([\wt{A_{i'i}},\wt{A_{ip}}])_{i'q}=[f(\wt{A_{i'i}}),\wt{A_{ip}}]_{i'q}+[\wt{A_{i'i}},f(\wt{A_{ip}})]_{i'q}
=A_{i'i}f(\wt{A_{ip}})_{iq}.
$$
Applying  Lemma \ref{thm:block-tran-1} to  $\phi:\M_{i'p}\to\M_{i'q}$ defined by $\phi(C):=f(\wt{C})_{i'q}$ and
$\varphi:\M_{ip}\to\M_{iq}$ defined by $\varphi(D):=f(\wt{D})_{iq}$, we can find $-Y_{pq}\in\M_{pq}$ such that
$f(\wt{A_{ip}})_{iq}=-A_{ip}Y_{pq}$ for all  $A_{ip}\in\M_{ip}$,
and $f(\wt{A_{i'p}})_{i'q}=-A_{i'p}Y_{pq}$ for all  $A_{i'p}\in\M_{i'p}$. So $-Y_{pq}$ is indepedent of $i$ and
$$
\wt{f(\wt{A_{ip}})_{iq}}=\ad \wt{Y_{pq}}(\wt{A_{ip}}),\qquad\text{for all}\quad A_{ip}\in\M_{ip}.
$$
 
\item
Given any $(i,p), (q,j)\in [I(\L)]$, 
we have $[\wt{A_{ip}},\wt{A_{qj}}]=0$, so that
\begin{eqnarray*}
0&=&f([\wt{A_{ip}},\wt{A_{qj}}])_{ij}
=[f(\wt{A_{ip}}),\wt{A_{qj}}]_{ij}+[\wt{A_{ip}},f(\wt{A_{qj}})]_{ij}
\\
&=& f(\wt{A_{ip}})_{iq}A_{qj}+A_{ip}f(\wt{A_{qj}})_{pj}
=-A_{ip}Y_{pq}A_{qj}+A_{ip}X_{pq}A_{qj}.
\end{eqnarray*}
Therefore, $X_{pq}=Y_{pq}$.
\end{enumerate}

Overall, we successfully find $X_{pq}$ that satisfies \eqref{A_ip} and \eqref{A_qj}.

\item
From (2) and (3), we can construct a matrix in $\M_{\L}$:
$$
X_0\ :=\ \sum_{(k,k)\in [I(\L)]} \wt{X_{kk}}+\sum_{1\le p<q\le t} \wt{X_{pq}}.
$$
Define the derivation
\begin{equation}\label{f_1}
f_1 :=f- \ad X_0.
\end{equation}
Then  for any $(k,k)\in [I(\L)]$, $1\le p<q\le t$, and $(i,p), (q,j)\in [I(\L)]$, we have
$$
f_1(\wt{\M_{kk}})_{kk}=0,\qquad
f_1(\wt{\M_{ip}})_{iq}=0,\qquad
f_1(\wt{\M_{qj}})_{pj}=0.
$$
By Lemmas \ref{thm:I_kk} and \ref{thm:A_pq},  $f_1$ belongs to the following set: 
\begin{eqnarray}
D_0 :=\{g\in\Der(M_{\L}) &\mid& g(\wt{\M_{kk}})\in \cen{M_{\L}}\cap M_{\L}\text{ for } (k,k)\in [I(\L)],
\notag
\\
&&g(\wt{\M_{pq}})\subseteq  \wt{\M_{pq}}\text{ for } 1\le p<q\le t\}.
\end{eqnarray}
It remains to describe the subalgebra $D_0$ of $\Der(M_{\L})$. 

\item 
Given $f'\in \Der(M_{\L})$,  $p,q\in [t]$ with $p<q$, and  $k\in [t]$ with $p\le k\le q$, Lemmas \ref{thm:I_kk} and \ref{thm:A_pq} imply that
\begin{eqnarray}
f'(\wt{A_{pk}A_{kq}})_{pq}
&=& f'([\wt{A_{pk}},\wt{A_{kq}}])_{pq}
= [f'(\wt{A_{pk}}),\wt{A_{kq}}]_{pq}+[\wt{A_{pk}},f'(\wt{A_{kq}})]_{pq}
\notag\\
&=& f'(\wt{A_{pk}})_{pk}A_{kq}+A_{pk}f'(\wt{A_{kq}})_{kq}.
\label{ApkAkq}
\end{eqnarray}
This formula will be frequently used  in the following computations.

\item 
We prove the following claim regarding $f_1$ defined in \eqref{f_1}:
there exist $Y_{ii}\in \M_{ii}$ for $i\in [t]$, such that for each  $k\in [t]$,  the derivation
$f_{1}^{(k)} :=\left.\pr{ f_1-\sum_{i=1}^{k} \ad \wt{Y_{ii}}}\right|_{M_{\L}}$  has the images
\begin{equation}\label{f_1(k)}
\begin{cases}
f_{1}^{(k)}(\wt{\M_{qq}})=f_1(\wt{\M_{qq}}), &\text{for}\ (q,q)\in [I(\L)],\ q\le k;
\\
f_{1}^{(k)}(\wt{\M_{pq}})=0, &\text{for}\  (p,q)\in[I(\L)],\ 1\le p<q\le k.
\end{cases}
\end{equation}
Moreover,  $Y_{ii} \in\F I_{ii}$ whenever $(i,i)\in [I(\L)]$.

The proof is proceeded  by induction on $k$: 
\begin{enumerate}
 \item $k=1$ and $2$:  There are two subcases:
 \begin{itemize}
 \item
 If $(1,1)\in [I(\L)]$, we let $Y_{11}=0\in\M_{11}$ so that $f_1^{(1)}=f_1$. By \eqref{ApkAkq},
  \begin{eqnarray*}
 f_1^{(1)}(\wt{A_{11}A_{12}})_{12}
 =
 f_1^{(1)}(\wt{A_{11}})_{11} A_{12}+A_{11}f_1^{(1)}(\wt{A_{12}})_{12}
=
 A_{11}f_1^{(1)}(\wt{A_{12}})_{12}.
\end{eqnarray*}
 By Lemma \ref{thm:block-tran-1}, there exists $-Y_{22}\in\M_{22}$, such that $f_1^{(1)}(\wt{A_{12}})_{12}=-A_{12}Y_{22}$.
 Let $f_1^{(2)}= f_1^{(1)}-\ad\wt{Y_{22}}$.  Then $ f_1^{(2)}(\wt{A_{12}})=0$.
If furthermore $(2,2)\in [I(\L)]$, then by \eqref{ApkAkq},
 \begin{eqnarray*}
0=f_1^{(2)}(\wt{A_{12}A_{22}})_{12}=f_1^{(2)}(\wt{A_{12}})_{12}A_{22}+A_{12}f_1^{(2)}(\wt{A_{22}})_{22}=A_{12}f_1^{(2)}(\wt{A_{22}})_{22}.
 \end{eqnarray*}
Thus
$$0=f_1^{(2)}(\wt{A_{22}})_{22}=f_1(\wt{A_{22}})_{22}-[Y_{22},A_{22}]=-[Y_{22},A_{22}].$$
So $Y_{22}\in\F I_{22}$ and $f_1^{(2)}(\wt{A_{22}})=f_1(\wt{A_{22}})$. The claim holds for $k=1, 2$.

  \item
If $(1,1)\not\in [I(\L)]$, then $(2,2)\in [I(\L)]$ by Theorem \ref{thm:dominant-ladder}.
By \eqref{ApkAkq},
$$
f_1(\wt{A_{12}A_{22}})_{12}=f_1(\wt{A_{12}})_{12}A_{22}+A_{12}f_1(\wt{A_{22}})_{22}=f_1(\wt{A_{12}})_{12}A_{22}.
$$
By Lemma \ref{thm:block-tran-2}, there exists $Y_{11}\in\M_{11}$ such that $f_1(\wt{A_{12}})_{12}=Y_{11}A_{12}$.  Let
$Y_{22}=0\in\M_{22}$, $f_1^{(1)}=f_1-\ad\wt{Y_{11}}$, and $f_1^{(2)}=f_1^{(1)}-\ad\wt{Y_{22}}$.  Then the claim holds for $k=1,2$.
\end{itemize}

 \item $k=\ell >2$: Suppose the claim holds for $k=\ell-1\ge 2$.  So there exist $Y_{11}\in\M_{11},\cdots, Y_{\ell-1,\ell-1}\in\M_{\ell-1,\ell-1}$, such that
 $f_{1}^{(\ell-1)} := f_1-\sum_{i=1}^{\ell-1} \ad \wt{Y_{ii}}$ satisfies \eqref{f_1(k)}   for $k=\ell-1$.
 Clearly  $f_{1}^{(\ell-1)} \in D_0$.
For any $p\in[\ell-2]$, by \eqref{ApkAkq},
\begin{eqnarray*}
  f_{1}^{(\ell-1)}(\wt{A_{p,\ell-1}A_{\ell-1,\ell}})_{p\ell}
&=& f_{1}^{(\ell-1)}(\wt{A_{p,\ell-1}})_{p,\ell-1}A_{\ell-1,\ell}+ A_{p,\ell-1}f_{1}^{(\ell-1)}(\wt{A_{\ell-1,\ell}})_{\ell-1,\ell}
\\
&=&  A_{p,\ell-1}f_{1}^{(\ell-1)}(\wt{A_{\ell-1,\ell}})_{\ell-1,\ell}.
\end{eqnarray*}
  By Lemma \ref{thm:block-tran-1}, there exists $-Y_{\ell\ell}\in\M_{\ell\ell}$, such that
  $$
   f_{1}^{(\ell-1)}(\wt{A_{p\ell}})_{p\ell}= -A_{p\ell}Y_{\ell\ell}\qquad\text{for all}\quad p\in[\ell-1].
  $$
  Let
  $$
  f_{1}^{(\ell)}:=f_{1}^{(\ell-1)}-\ad\wt{Y_{\ell\ell}}.
  $$
  Then $ f_{1}^{(\ell)}(\wt{A_{p\ell}})=0$ for $p\in[\ell-1]$.
  In the case $(\ell,\ell)\in[I(\L)]$, by \eqref{ApkAkq},
\begin{eqnarray*}
  0= f_{1}^{(\ell)}(\wt{A_{\ell-1,\ell}A_{\ell\ell}})_{\ell-1,\ell}
  = f_{1}^{(\ell)}(\wt{A_{\ell-1,\ell}})_{\ell-1,\ell}A_{\ell\ell}+
  A_{\ell-1,\ell} f_{1}^{(\ell)}(\wt{A_{\ell\ell}})_{\ell\ell}
= A_{\ell-1,\ell} f_{1}^{(\ell)}(\wt{A_{\ell\ell}})_{\ell\ell}.
\end{eqnarray*}
So
$$
0=f_{1}^{(\ell)}(\wt{A_{\ell\ell}})_{\ell\ell}=\pr{f_1-\sum_{i=1}^{\ell} \ad \wt{Y_{ii}} }(\wt{A_{\ell\ell}})_{\ell\ell}=-[Y_{\ell\ell},A_{\ell\ell}].
$$
Thus $Y_{\ell\ell}\in\F I_{\ell\ell}$ and $ f_{1}^{(\ell)}(\wt{A_{\ell\ell}})= f_{1}(\wt{A_{\ell\ell}})$.
The claim is proved for $k=\ell$.

\item
Overall,  the claim holds for every $k\in [t]$.

\end{enumerate}

\item
The derivation
$f_{1}^{(t)} = f_1-\sum_{i=1}^{t} \ad \wt{Y_{ii}}$
sends each $\wt{\M_{kk}}$ for $(k,k)\in [I(\L)]$ to $\cen{M_{\L}}\cap M_{\L}$, and $\wt{\M_{pq}}$ for $1\le p<q\le t$ to $0$.
For any $A,B\in M_{\L}$,
$$
f_{1}^{(t)}([A,B])=[f_{1}^{(t)}(A),B]+[A,f_{1}^{(t)}(B)]=0.
$$
Therefore, $f_{1}^{(t)}\in \mathcal{D}$ for $\mathcal{D}$ defined in \eqref{Der-D}.
Every $\phi\in{\mathcal D}$ satisfies $\phi([A,B])=0=[\phi(A),B]+[A,\phi(B)]$ for $A,B\in M_{\L}$.  Thus ${\mathcal D}\subseteq\Der{M_{\L}}$. So far we have
$$
\Der(M_{\L})=\left(\ad M_{\L_B}\right)|_{M_{\L}}+ {\mathcal D}.
$$

If $(1,1)\in [I(\L)]$ or $(t,t)\in[I(\L)]$, then $\cen{M_{\L}}\cap M_{\L}=0$ implies that $ {\mathcal D}=0$. We get
\eqref{Der-with-first-or-last}.

If neither $(1,1)$ nor $(t,t)$ is in $[I(\L)]$, then $\cen{M_{\L}}\cap M_{\L}=\wt{\M_{1t}}$. The set $\{\wt{E_{11}^{[kk]}}\mid (k,k)\in [I(\L)]\}$
spans a subalgebra complement to $[M_{\L},M_{\L}]$ in $M_{\L}$.  Then
for any $\phi\in {\mathcal D}$ and $A\in M_{\L}$, 
$$\phi(A)=\sum_{(k,k)\in [I(\L)]} \phi(\wt{A_{kk}})
=\sum_{(k,k)\in [I(\L)]} \tr(A_{kk})\phi(\wt{E_{11}^{[kk]}}).
$$
Denote $\wt{Y_{1tk}}:=\phi(\wt{E_{11}^{[kk]}})\in\wt{\M_{1t}}$ for $(k,k)\in [I(\L)]$.  We get
\eqref{Der-no-first-and-last}.

In all the cases, the equations \eqref{Der-with-first-or-last} and \eqref{Der-no-first-and-last} as well as \eqref{Der-block-upper-triagular} derived in (1) imply \eqref{Der-M_L} and \eqref{Der-M_L-2} by direct verification.
So Theorem \ref{Main theorem} is completely proved.  \qedhere

\end{enumerate}

\end{proof}

\section{Derivations of  $[M_{\L},M_{\L}]$ for SDUT ladder $\L$}

In this section, we will give an explicit description of the derivation algebra of  $[M_{\L},M_{\L}]$ for a  SDUT ladder $\L$  when ${\rm char}(\F)\ne 2, 3$.  
The Lie subalgebra $[M_{\L},M_{\L}]$  consists of matrices in $M_{\L}$ that have zero trace on every diagonal block of $M_{\L}$. 
To see the motivations of studying $\Der([M_{\L},M_{\L}])$, we  make the following notation. 

\begin{df} Given an upper triangular ladder $\L$,
let $M_{\L}^0$ denote the Lie subalgebra of $M_{\L}$ consisting of matrices with zero trace on every diagonal block of $M_{\L}$ with respect to the partition of $\L$.
\end{df}

Any derivation of a Lie algebra $\g$ preserves the lower central series, upper central series, and  derived series of $\g$.
 Given an  upper triangular ladder $\L$,  the derived series of $M_{\L}$ is:
$$
M_{\L}=M_{\L}^{(0)}\unrhd M_{\L}^{(1)}\unrhd M_{\L}^{(2)}\unrhd \cdots ,\qquad M_{\L}^{(k)}:=[M_{\L}^{(k-1)},M_{\L}^{(k-1)}].
$$
The following observations are straighforward in the view point of partitioned matrices:
\begin{enumerate}
\item
When $k\ge 1$, each $M_{\L}^{(k)}=M_{\L_k}^0$ for some upper triangular ladder $\L_k$ contained in $\L$, that is, 
$$
I(\L)\supseteq I(\L_1)\supseteq I(\L_2)\supseteq\cdots
$$
\item
The Lie algebra $M_{\L}$ is non-solvable if and only if its derived series  terminates at a nonzero $M_{\L_*}^0$, where $\L_*$ is the maximal SDUT ladder contained in ${\L}$.
Precisely,  
$$\L_*=\{(i_{\ell},j_{\ell})\in\L\mid i_{\ell}>j_{\ell}\}.$$
If $\L_*$ given above is an empty set, then  $M_{\L}$ is solvable, and  the derived series of $M_{\L}$  terminates at $0$.
\item
Every $f\in\Der(M_{\L})$ stabilizes $M_{\L_*}^0$ and induces a derivation $f|_{M_{\L_*}^0}\in\Der(M_{\L_*}^0)$. 
The restriction map $\pi:\Der(M_{\L})\to \Der(M_{\L_*}^0)$ is a Lie algebra homomorphism. 

\end{enumerate}

\begin{ex}
In $M_8$,  the forms of $M_{\L}$, $M_{\L_*}$, and $M_{\L^*}^0$ associate with an upper triangular $\L$ are illustrated below.
In particular, we see that $\L_*=\{(2,1),(7,6)\}$ is  SDUT.
$$
{\small
\begin{array}{c|c|c}
M_{\L}	&M_{\L_*}	&M_{\L_*}^0
\\ \hline
	&	&
\\
\left(\begin{array}{cc:c:c:c:cc:c}
*	&*	&*	&*	&*	&*	&*	&*
\\
*	&*	&*	&*	&*	&*	&*	&*
\\ \hdashline
	&	&*	&*	&*	&*	&*	&*
\\ \hdashline
	&	&	&*	&*	&*	&*	&*
\\ \hdashline
	&	&	&	&0	&*	&*	&*
\\ \hdashline
	&	&	&	&	&*	&*	&*
\\
	&	&	&	&	&*	&*	&*
\\  \hdashline
	&	&	&	&	&	&	&*
\end{array}\right)
	&
\left(\begin{array}{cc:c:c:c:cc:c}
*	&*	&*	&*	&*	&*	&*	&*
\\
*	&*	&*	&*	&*	&*	&*	&*
\\ \hdashline
	&	&0	&0	&0	&*	&*	&*
\\ \hdashline
	&	&	&0	&0	&*	&*	&*
\\ \hdashline
	&	&	&	&0	&*	&*	&*
\\ \hdashline
	&	&	&	&	&*	&*	&*
\\
	&	&	&	&	&*	&*	&*
\\  \hdashline
	&	&	&	&	&	&	&0
\end{array}\right)
	&
\left(\begin{array}{cc:c:c:c:cc:c}
a	&*	&*	&*	&*	&*	&*	&*
\\
*	&-a	&*	&*	&*	&*	&*	&*
\\ \hdashline
	&	&0	&0	&0	&*	&*	&*
\\ \hdashline
	&	&	&0	&0	&*	&*	&*
\\ \hdashline
	&	&	&	&0	&*	&*	&*
\\ \hdashline
	&	&	&	&	&b	&*	&*
\\
	&	&	&	&	&*	&-b	&*
\\  \hdashline
	&	&	&	&	&	&	&0
\end{array}\right)
\end{array}
}
$$
\end{ex}

The above observations indicate that the structure of  $\Der(M_{\L}^0)$ for  SDUT ladders $\L$ (where $\L=\L_*$) will be useful in studying the structure of $\Der(M_{\L'})$ for non-solvable upper triangular ladders $\L'$.
{\em In the rest of this section, we assume that $\L$ is a SDUT ladder, unless otherwise  specified.}
Let $t:=|\gamma_{\L}|+1$ as before.  

\begin{theorem}\label{thm:strongly dominant}
Suppose ${\rm char}(\F)\ne 2, 3$.  Let $\L$ be a SDUT ladder of size $n$.  Then
every derivation $f\in\Der(M_{\L}^0)$ can be extended to a derivation $f^+\in\Der(M_{\L})$ such that
$f^+|_{M_{\L}^0}=f$.  In particular, there exists a block upper triangular matrix $X\in M_{\L_B}$ such that
\begin{equation}\label{SDUT-Der-1}
f(B)=\ad X(B)=[X, B],\qquad\text{for all \ } B\in M_{\L}^0.
\end{equation}
We can write
\begin{equation}\label{SDUT-Der-2}
\Der(M_{\L}^0)=\ad(\nor{M_{\L}}/\cen{M_{\L}})|_{M_{\L}^0}.
\end{equation}
\end{theorem}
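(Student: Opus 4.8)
The plan is to run the machinery of the proof of Theorem \ref{Main theorem} (Section \ref{sect:proof}), now for the Lie algebra $M_{\L}^0=[M_{\L},M_{\L}]$, whose blocks with respect to $\gamma_{\L}$ are the full matrix spaces $\M_{pq}$ for all $p<q$ (Theorem \ref{thm:dominant-ladder}) together with the traceless spaces $\s\l_{kk}$ for $(k,k)\in[I(\L)]$. Since $\L$ is SDUT each such diagonal block has size $\geq 2$, so each $\s\l_{kk}$ is perfect, and since $[\s\l_{pp},\M_{pq}]=\M_{pq}$ and $[\M_{p,p+1},\M_{p+1,q}]=\M_{pq}$, the algebra $M_{\L}^0$ is perfect. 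Because $M_{\L}^0$ is an ideal of $M_{\L}$ and $M_{\L_B}=\nor{M_{\L}}$, we have $[M_{\L_B},M_{\L}^0]\subseteq M_{\L}^0$, so $(\ad X)|_{M_{\L}^0}\in\Der(M_{\L}^0)$ for every $X\in M_{\L_B}$, and $\ad$ annihilates $\cen{M_{\L}}$ on $M_{\L}^0$. First I would establish the analogues of Lemmas \ref{thm:I_kk} and \ref{thm:A_pq} for $M_{\L}^0$: given $f\in\Der(M_{\L}^0)$, the image $f(M_{\L}^0)$ lies in $M_{\L_B}$, and $f$ carries each block $\wt{\M_{pq}}$, $\wt{\s\l_{kk}}$ into the sum of the blocks dominating it together with the center of $M_{\L}^0$. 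Their proofs copy those of Lemmas \ref{thm:I_kk} and \ref{thm:A_pq}, using that $\s\l_{kk}=[\s\l_{kk},\s\l_{kk}]$ lets one write $\wt{A_{kk}}$ as a sum of brackets and that all strictly upper blocks already lie in $M_{\L}^0$.

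I would then reconstruct, block by block, a matrix $X=\sum_{(k,k)\in[I(\L)]}\wt{X_{kk}}+\sum_{1\le p<q\le t}\wt{X_{pq}}\in M_{\L_B}$ with $f=(\ad X)|_{M_{\L}^0}$, following steps (2)--(8) of the proof of Theorem \ref{Main theorem}. For the diagonal parts, $A_{kk}\mapsto f(\wt{A_{kk}})_{kk}$ is a derivation of $\s\l_{kk}$; since ${\rm char}(\F)\ne 2,3$ and the block has size $\geq 2$, $\Der(\s\l_m)=\ad(\gl_m)|_{\s\l_m}$ (this is exactly where ${\rm char}(\F)\ne 3$ enters, to exclude the exceptional $\s\l_3$ in characteristic $3$, just as ${\rm char}(\F)\ne 2$ excludes $\s\l_2$ in characteristic $2$), so $f(\wt{A_{kk}})_{kk}=[X_{kk},A_{kk}]$ for some $X_{kk}\in\M_{kk}$. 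The off-diagonal parts $X_{pq}$ come as in step (3), by applying Lemmas \ref{thm:block-tran-1}--\ref{thm:block-tran-2} to $f(\wt{A_{pk}A_{kq}})_{pq}=f(\wt{A_{pk}})_{pk}A_{kq}+A_{pk}f(\wt{A_{kq}})_{kq}$ along chains of \emph{full} strictly upper blocks, and the corrections $\wt{Y_{ii}}\in\wt{\M_{ii}}\subseteq M_{\L_B}$ of steps (5)--(8) are absorbed into $X$. Once $X$ is built, $f_1:=f-(\ad X)|_{M_{\L}^0}$ has image in the center of $M_{\L}^0$, so $f_1([A,B])=[f_1A,B]+[A,f_1B]=0$ for all $A,B\in M_{\L}^0$; since $M_{\L}^0$ is perfect, $f_1=0$ and $f=(\ad X)|_{M_{\L}^0}$ with $X\in M_{\L_B}$. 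In particular there is no analogue of the summand $\mathcal{D}$ of Theorem \ref{Main theorem}.

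The step I expect to be the main obstacle is the two boundary blocks $(1,1)$ and $(t,t)$ when they belong to $[I(\L)]$: in Theorem \ref{Main theorem} the blocks $X_{1q}$ and $X_{pt}$ are pinned down by evaluating a derivation identity at $\wt{I_{11}}$ or $\wt{I_{tt}}$, but these matrices are not in $M_{\L}^0$. Instead I would exploit perfectness of $\s\l_{tt}$ to rewrite $\wt{A_{tt}}$ as a sum of brackets inside $\wt{\s\l_{tt}}$, which reduces the determination of $f(\wt{A_{tt}})_{pt}$ to a linear map $\psi:\s\l_{tt}\to\M_{pt}$ satisfying $\psi([A,B])=\psi(A)B-\psi(B)A$, and then apply an $\s\l_m$-version of Lemmas \ref{thm:block-tran-1}--\ref{thm:block-tran-2} (proved by the same standard-matrix bookkeeping, with $E_{ij}\in\s\l_m$ for $i\ne j$ and ${\rm char}(\F)\ne 2$) to conclude $\psi(A)=X_{pt}A$; the block $(1,1)$ is symmetric. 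Once $f=(\ad X)|_{M_{\L}^0}$ with $X\in M_{\L_B}=\nor{M_{\L}}$ is proved, the rest is immediate: $\ad X$ stabilizes $M_{\L}$ by Lemma \ref{thm:normalizer}, so $f^+:=(\ad X)|_{M_{\L}}\in\Der(M_{\L})$ extends $f$, which gives \eqref{SDUT-Der-1} and the extension statement; and because $\ad$ kills $\cen{M_{\L}}$ on $M_{\L}^0$, the homomorphism $\nor{M_{\L}}/\cen{M_{\L}}\to\Der(M_{\L}^0)$, $X\mapsto(\ad X)|_{M_{\L}^0}$, is surjective, which is \eqref{SDUT-Der-2}.
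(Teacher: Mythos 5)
Your overall architecture is legitimate and even somewhat different from the paper's: you reconstruct $X\in M_{\L_B}$ block by block so that $f=(\ad X)|_{M_{\L}^0}$ and then set $f^+:=(\ad X)|_{M_{\L}}$, whereas the paper first builds the extension $f^+$ directly (by defining $f^+(\wt{E_{11}^{[kk]}})=[X_k,\wt{E_{11}^{[kk]}}]$ from the block data and verifying the derivation identity) and only then invokes Theorem \ref{Main theorem} to get \eqref{SDUT-Der-1}. But your proposal has a genuine gap precisely at the step you flag as the main obstacle. The lemma you need there --- a single linear map $\psi:\s\l_{tt}\to\M_{pt}$ satisfying $\psi([A,B])=\psi(A)B-\psi(B)A$ must have the form $\psi(A)=X_{pt}A$ --- is not ``an $\s\l_m$-version of Lemmas \ref{thm:block-tran-1}--\ref{thm:block-tran-2} proved by the same standard-matrix bookkeeping with ${\rm char}(\F)\ne 2$.'' Those two lemmas involve \emph{two} maps coupled by $\phi(AB)=A\varphi(B)$ with $A$ ranging over a full matrix space, and their proof does not transfer; the one-map bracket identity on $\s\l_n$ is exactly the paper's Lemmas \ref{thm:sl_n-case} and \ref{thm:sl_n-case-2}, which require ${\rm char}(\F)\ne 2,3$ and a genuinely different argument (the $\s\l_2$-triple relation forces $(2I_n-H)\phi(E)=-E\phi(H)$ with $2I_n-H=\diag(2,\dots,1,\dots,3,\dots,2)$, whose invertibility needs $3\ne 0$, followed by an extension of $\phi$ to $M_n$). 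The statement you claim under ${\rm char}(\F)\ne 2$ alone is false: the paper exhibits a characteristic-$3$ map $\phi:\s\l_2\to M_2$ satisfying the identity but not of the form $\phi(C)=CX$, and correspondingly a non-inner derivation of $M_{\L}^0$ for $\L=\{(2,1)\}$ in $M_4$. Relatedly, your attribution of the hypothesis ${\rm char}(\F)\ne 3$ to excluding ``exceptional $\s\l_3$'' in the identification $\Der(\s\l_m)=\ad(\gl_m)|_{\s\l_m}$ is misplaced: the paper uses only ${\rm char}(\F)\ne 2$ for the diagonal blocks, and the true characteristic-$3$ obstruction lives in the cross-block maps above, already for blocks of size $2$.

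A secondary under-justification: you assert that the block-dominance lemma for $M_{\L}^0$ (the analogue of Lemmas \ref{thm:I_kk} and \ref{thm:A_pq}) is proved by ``copying'' those proofs. But the proof of Lemma \ref{thm:A_pq} for the superdiagonal blocks hinges on bracketing with $\wt{I_{pp}}$, which is not available in $M_{\L}^0$; the paper's Lemma \ref{range-M_L^0} replaces this by a careful computation with standard matrices $E^{[p,p+1]}_{kj}$, using ${\rm char}(\F)\ne 2$ and the SDUT condition (diagonal blocks of size $\ge 2$) to kill the $(i,p)$-components with $i\le p$. This can be fixed along the paper's lines, but it is not a verbatim copy. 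Once these two points are repaired --- in particular, once you prove (or cite) the $\s\l_n$-lemmas with the full hypothesis ${\rm char}(\F)\ne 2,3$ --- the rest of your argument (perfectness of $M_{\L}^0$ to eliminate the residual central-valued map, and surjectivity of $\nor{M_{\L}}/\cen{M_{\L}}\to\Der(M_{\L}^0)$ giving \eqref{SDUT-Der-2}) goes through.
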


The proof of Theorem \ref{thm:strongly dominant} will be defered to the end of this section.

\begin{cor}
When ${\rm char}(\F)\ne 2, 3$, and $\L$ is a SDUT ladder, we have the split exact sequence:
\begin{equation}\label{SDUT-exact}
0\hookrightarrow {\mathcal D}\hookrightarrow \Der(M_{\L})\overset{\pi}{\twoheadrightarrow}  \Der(M_{\L}^0)\twoheadrightarrow  0,
\end{equation}
where ${\mathcal D}$ is defined in \eqref{Der-D}.
\end{cor}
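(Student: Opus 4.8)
The plan is to deduce the corollary directly from Theorems \ref{Main theorem} and \ref{thm:strongly dominant}, using that $[M_{\L},M_{\L}]=M_{\L}^0$ since $\L$ is SDUT, so that restriction to $M_{\L}^0$ gives a well-defined Lie algebra homomorphism $\pi\colon\Der(M_{\L})\to\Der(M_{\L}^0)$. The one substantive ingredient I would isolate first is that the induced map $\ad(\cdot)|_{M_{\L}^0}\colon\nor{M_{\L}}/\cen{M_{\L}}\to\Der(M_{\L}^0)$ is \emph{injective}; equivalently, any $X\in\nor{M_{\L}}$ with $\ad X|_{M_{\L}^0}=0$ already lies in $\cen{M_{\L}}$. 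Granting this and combining with the surjectivity of $\ad(\cdot)|_{M_{\L}^0}$ furnished by Theorem \ref{thm:strongly dominant}, that map becomes a Lie algebra isomorphism $\nor{M_{\L}}/\cen{M_{\L}}\xrightarrow{\ \sim\ }\Der(M_{\L}^0)$; composing its inverse with the (always injective) homomorphism $\ad(\cdot)|_{M_{\L}}\colon\nor{M_{\L}}/\cen{M_{\L}}\to\Der(M_{\L})$ produces a Lie algebra homomorphism $\sigma\colon\Der(M_{\L}^0)\to\Der(M_{\L})$ with $\pi\circ\sigma=\mathrm{id}$, because $\pi(\ad X|_{M_{\L}})=\ad X|_{M_{\L}^0}$. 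In one stroke this gives surjectivity of $\pi$ and a splitting of the sequence \eqref{SDUT-exact}.

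It then remains to identify $\Ker\pi$ with $\mathcal{D}$, for which I would invoke the direct sum decomposition $\Der(M_{\L})=\ad(\nor{M_{\L}}/\cen{M_{\L}})|_{M_{\L}}\oplus\mathcal{D}$ from Theorem \ref{Main theorem}. The inclusion $\mathcal{D}\subseteq\Ker\pi$ is immediate, since any $\phi\in\mathcal{D}$ annihilates $[M_{\L},M_{\L}]=M_{\L}^0$ and hence $\pi(\phi)=0$. For the reverse inclusion, note that $\pi$ is injective on the summand $\ad(\nor{M_{\L}}/\cen{M_{\L}})|_{M_{\L}}$: if $g=\ad X|_{M_{\L}}$ and $\pi(g)=\ad X|_{M_{\L}^0}=0$, then $X\in\cen{M_{\L}}$ by the injectivity statement above, so $g=0$. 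Writing a general $f\in\Der(M_{\L})$ as $f=g+\phi$ with $g$ in the first summand and $\phi\in\mathcal{D}$, we get $\pi(f)=\pi(g)$, whence $\pi(f)=0$ forces $g=0$ and $f=\phi\in\mathcal{D}$. Thus $\Ker\pi=\mathcal{D}$, exactness at $\mathcal{D}$ is trivial (the first arrow is an inclusion), and \eqref{SDUT-exact} is a split short exact sequence.

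The genuinely computational point — and the step I expect to be the main obstacle, though it is short — is the injectivity of $\ad(\cdot)|_{M_{\L}^0}$ on $\nor{M_{\L}}/\cen{M_{\L}}$. Here I would use that $\nor{M_{\L}}=M_{\L_B}$ consists of block upper triangular matrices (Lemma \ref{thm:normalizer}); that $M_{\L}^0$ contains every off-diagonal block $\wt{\M_{pq}}$ with $p<q$ (all such $(p,q)$ lie in $[I(\L)]$ by Theorem \ref{thm:dominant-ladder}) as well as $\wt{\s\l_{kk}}$ for each $(k,k)\in[I(\L)]$; and that the SDUT hypothesis forces every nonzero diagonal block to have size $\ge 2$, so each $\s\l_{kk}$ contains invertible matrices and has only scalar matrices as its centralizer in $\M_{kk}$. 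Given $X\in M_{\L_B}$ with $\ad X|_{M_{\L}^0}=0$: pairing $\ad X$ against the blocks $\wt{\M_{pq}}$ ($p<q$) kills every superdiagonal block of $X$ except possibly the $(1,t)$ one and, via Lemma \ref{thm:AB=BC}, forces all diagonal blocks of $X$ to be a single common scalar $\mu I$; pairing $\ad X$ against $\wt{\s\l_{11}}$ (respectively $\wt{\s\l_{tt}}$) then kills the $(1,t)$ block whenever $(1,1)\in[I(\L)]$ (respectively $(t,t)\in[I(\L)]$). Comparing with the description of $\cen{M_{\L}}$ in Lemma \ref{thm:centralizer}, this says precisely $X\in\cen{M_{\L}}$. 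This injectivity is quite likely already contained in the proof of Theorem \ref{thm:strongly dominant}, in which case it suffices to cite it.
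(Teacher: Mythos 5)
Your proposal is correct and follows essentially the same route as the paper: deduce surjectivity of $\pi$ from Theorem \ref{thm:strongly dominant}, use the decomposition $\Der(M_{\L})=\ad(\nor{M_{\L}}/\cen{M_{\L}})|_{M_{\L}}\oplus\mathcal{D}$ from Theorem \ref{Main theorem}, and identify $\Ker\pi=\mathcal{D}$ via the centralizer comparison. The only difference is that you spell out the injectivity/centralizer step ($\ad X|_{M_{\L}^0}=0$ forces $X\in\cen{M_{\L}}$) that the paper dismisses as ``easy to check,'' and your verification of it is accurate.
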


\begin{proof}
Theorem \ref{thm:strongly dominant} shows that the restriction map $\pi:\Der(M_{\L})\to \Der(M_{\L}^0)$ is surjective.
Theorem \ref{Main theorem} shows that $\Der(M_{\L})  = \ad(\nor{M_{\L}}/\cen{M_{\L}})|_{M_{\L}}\oplus\mathcal{D}$. 
It is easy to check that $\cen{M_{\L}}=\cen{M_{\L}^0}$ and  $\Ker\pi={\mathcal D}$. Therefore, we get the split exact sequence \eqref{SDUT-exact}. 
\end{proof}

\begin{ex}
When ${\rm char}(\F)=2$ or $3$, we show by  counterexamples that $\Der(M_{\L}^0)$ is not in the form of \eqref{SDUT-Der-2}. 

\begin{itemize}
\item   ${\rm char}(\F)=2$: 
Let $M_{\L}=M_2$, so that $M_{\L}^{0}=\s\l_2$. 
Let $f$ be the derivation of $M_2$ given in Example \ref{ex-main-char-2}, that is, $f(E_{12})=E_{21}$, and $f(E_{ij})=0$ for $(i,j)\in\{(1,1), (2,2), (2,1)\}$.  Then $f|_{\s\l_2}$ is a derivation of $\s\l_2$.  However, there is no $X\in M_{\L_B}=M_2$ such that
$f|_{\s\l_2}(E_{12})=[X,E_{12}]$. 

\item  ${\rm char}(\F)=3$:
Let $n=4$, $\L=\{(2,1)\}$. Then $M_{\L}^0$ consists of matrices in $M_4$ that takes the following forms:
$$
\mtx{a_{11} &a_{12} &a_{13} &a_{14}\\
a_{21} &-a_{11} &a_{23} &a_{24}\\
0 &0 &0 &0\\
0 &0 &0 &0
},\qquad a_{ij}\in\F.
$$
So $M_{\L}^0$ has a basis ${\mathcal B}=\{E_{11}-E_{22}, E_{12}, E_{13}, E_{14}, E_{21}, E_{23}, E_{24}\}$. Define
$f\in  \End(M_{\L}^0)$ by $f(E_{12}):=E_{24}$, and
$f(E)=0$ for all other matrices $E$ in the basis ${\mathcal B}$.
We prove that
\begin{equation}\label{char-3-der}
f([E,E'])=[f(E),E']+[E,f(E')]
\end{equation}
for any distinct $E,E'\in{\mathcal B}$, so that $f\in \Der(M_{\L}^0)$.  The only case that the left side or the right side of \eqref{char-3-der} is nonzero is
$\{E, E'\}=\{E_{11}-E_{22}, E_{12}\}$, in which
$$f([E,E'])=2f(E_{12})=2E_{24},\qquad  [f(E),E']+[E,f(E')]=-E_{24}.
$$
Since ${\rm char}(\F)=3$, the equality \eqref{char-3-der} holds for this case.
Therefore, \eqref{char-3-der} holds  for all $\{E,E'\}\subseteq {\mathcal B}$, and $f\in\Der(M_{\L}^0)$.
However,  there is no matrix $X\in M_4$, such that $f(E_{12})=[X, E_{12}]$.
\end{itemize}
\end{ex}

In order to prove Theorem \ref{thm:strongly dominant}, 
 we first give two lemmas similar to Lemmas \ref{thm:block-tran-1} and \ref{thm:block-tran-2}.

\begin{lemma}\label{thm:block-tran-3}
Suppose $n\ge 2$.
 If linear transformations $\phi:M_{mn}\to M_{mq}$ and $\varphi: \s\l_{n}\to M_{nq}$ satisfy that
\begin{equation}\label{block-tran-eq1}
\phi(AB)=A\varphi(B)\qquad\text{for all}\quad A\in M_{m n},\ \ B\in \s\l_{n},
\end{equation}
then  there is $X\in M_{nq}$ such that
$\phi(C)=CX$ for $C\in M_{mn}$ and $\varphi(D)=DX$ for $D\in \s\l_{n}$.
\end{lemma}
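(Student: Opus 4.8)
The plan is to follow the proof of Lemma~\ref{thm:block-tran-1} almost verbatim, the only new ingredient being a trace-absorption trick needed because $\s\l_n$ contains no diagonal standard matrix; the hypothesis $n\ge 2$ will be used precisely twice. First I would record the spanning remark that for every $j\in[n]$ and every row vector $w=(w_1,\dots,w_n)\in\F^n$ there is a matrix $B\in\s\l_n$ whose $j$-th row equals $w$: put $w$ in the $j$-th row and, using $n\ge 2$, pick an index $k\ne j$ and place $-w_j$ in the $(k,k)$ entry so that $\tr B=0$. Since $\E{1j}{mn}B$ is supported on the first row, with first row equal to the $j$-th row of $B$, it follows that the matrices $\E{1j}{mn}B$ (with $j\in[n]$, $B\in\s\l_n$) already span the first-row subspace of $M_{mn}$, i.e.\ the span of $\E{11}{mn},\dots,\E{1n}{mn}$.

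With this in hand the argument runs exactly as in Lemma~\ref{thm:block-tran-1}. From $\phi(\E{1j}{mn}B)=\E{1j}{mn}\varphi(B)$, whose right-hand side is supported on the first row, I would conclude that $\phi$ carries the first-row subspace of $M_{mn}$ into the first-row subspace of $M_{mq}$; hence there is a unique $X\in M_{nq}$ with $\phi(\E{1\ell}{mn})=\E{1\ell}{mn}X$ for all $\ell\in[n]$ (the $\ell$-th row of $X$ records $\phi(\E{1\ell}{mn})$), and by linearity $\phi(C)=CX$ for every $C$ in the first-row subspace of $M_{mn}$. Then for $B\in\s\l_n$ and $j\in[n]$ the matrix $\E{1j}{mn}B$ lies in that subspace, so $\E{1j}{mn}\varphi(B)=\phi(\E{1j}{mn}B)=(\E{1j}{mn}B)X=\E{1j}{mn}(BX)$; comparing first rows (the $j$-th row of $\varphi(B)$ on the left, the $j$-th row of $BX$ on the right) and letting $j$ range over $[n]$ gives $\varphi(B)=BX$ for all $B\in\s\l_n$. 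It then remains to upgrade $\phi(C)=CX$ from first-row $C$ to arbitrary $C\in M_{mn}$, for which it suffices to treat each standard matrix $\E{ij}{mn}$; here $n\ge 2$ enters a second time: if $j\ne 1$ write $\E{ij}{mn}=\E{i1}{mn}E_{1j}$ with $E_{1j}\in\s\l_n$, so that $\phi(\E{ij}{mn})=\E{i1}{mn}\varphi(E_{1j})=\E{i1}{mn}(E_{1j}X)=\E{ij}{mn}X$, while if $j=1$ write $\E{i1}{mn}=\E{i2}{mn}E_{21}$ with $E_{21}\in\s\l_n$ and argue the same way. Linearity then yields $\phi(C)=CX$ on all of $M_{mn}$.

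I do not anticipate a real obstacle: the entire content is that, because $n\ge 2$, $\s\l_n$ is large enough both to realize every row of an arbitrary matrix as a row of a traceless matrix and to factor $\E{i1}{mn}$ through an off-diagonal (hence traceless) matrix via $\E{ij}{mn}=\E{i1}{mn}E_{1j}$ or $\E{i1}{mn}=\E{i2}{mn}E_{21}$. Once these two points are set up, everything is a routine transcription of Lemma~\ref{thm:block-tran-1}, and the only care required is bookkeeping around the absence of diagonal elements in $\s\l_n$.
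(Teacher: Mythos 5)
Your proof is correct and follows essentially the same route as the paper: the paper omits the proof, noting only that it is parallel to Lemma \ref{thm:block-tran-1} via the key fact that $\{\E{1j}{mn}B\mid j\in[n],\ B\in\s\l_n\}$ still spans the first row space of $M_{mn}$, which is exactly your trace-absorption remark. Your explicit factorizations $\E{ij}{mn}=\E{i1}{mn}E_{1j}$ and $\E{i1}{mn}=\E{i2}{mn}E_{21}$ just make precise the final step ($M_{mn}\cdot\s\l_n$ spans $M_{mn}$) that the paper's template leaves implicit.
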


Lemma \ref{thm:block-tran-3} is very similar  to a special case ($p=n$) of Lemma \ref{thm:block-tran-1}, except that the domain of $\varphi$ is $\s\l_{n}$ instead of $M_{nn}=M_{n}$.  The proof of  Lemma \ref{thm:block-tran-3} (omitted) is totally parallel to 
that of Lemma \ref{thm:block-tran-1}, using the key fact that $\{\E{1j}{mn}B\mid j\in[n], B\in\s\l_n\}$ still spans the first row space of $M_{mn}$.
 Similarly, we have the following lemma.

\begin{lemma}\label{thm:block-tran-4}
Suppose $n\ge 2$.  If linear transformations $\phi:M_{nq}\to M_{mq}$ and $\varphi: \s\l_{n}\to M_{mn}$ satisfy that
\begin{equation}\label{block-tran-eq2}
\phi(BA)=\varphi(B)A\qquad\text{for all}\quad  A\in M_{nq},\ B\in \s\l_{n},
\end{equation}
then  there is $X\in M_{mn}$ such that
$\phi(C)=XC$ for $C\in M_{nq}$ and $\varphi(D)=XD$ for $D\in \s\l_{n}$.
\end{lemma}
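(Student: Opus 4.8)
The plan is to imitate the proof of Lemma~\ref{thm:block-tran-1}, in the version where the unknown matrix multiplies on the left (exactly as Lemma~\ref{thm:block-tran-2} is the left-multiplication mirror of Lemma~\ref{thm:block-tran-1}), the one new point being that although $\sL_n$ is a proper subspace of $M_n$, it still carries enough off-diagonal standard matrices once $n\ge2$. So I would begin by recording two spanning facts. For any $i\in[n]$ pick $j\ne i$, which is possible because $n\ge 2$; then $\E{ij}{n}\in\sL_n$ and $\E{ij}{n}\E{jk}{nq}=\E{ik}{nq}$ for every $k$. Reading this with $k=1$ shows that the matrices $B\,\E{j1}{nq}$ with $B\in\sL_n$ and $j\in[n]$ span the first column space of $M_{nq}$ (the copy of $\F^n$ sitting in the first column), while reading it for all $k$ shows $\sL_n\cdot M_{nq}=M_{nq}$.

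Next I would produce $X$. Each matrix $B\,\E{j1}{nq}$ is supported in its first column, with that column equal to the $j$-th column of $B$; by the hypothesis, $\phi(B\,\E{j1}{nq})=\varphi(B)\,\E{j1}{nq}$, which is again supported in the first column. Since such matrices span the first column space of $M_{nq}$, the restriction of $\phi$ to that space maps into the first column space of $M_{mq}$ and is therefore left multiplication by a unique matrix $X\in M_{mn}$. Applying this to $B\,\E{j1}{nq}$ and comparing first columns shows that the $j$-th column of $\varphi(B)$ equals $X$ times the $j$-th column of $B$, for every $j\in[n]$ and every $B\in\sL_n$; that is, $\varphi(B)=XB$ on all of $\sL_n$, which is the second asserted formula.

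Finally I would feed this back into the hypothesis: $\phi(BA)=\varphi(B)A=XBA$ for all $A\in M_{nq}$ and $B\in\sL_n$. Because the products $BA$ span $M_{nq}$ by the second spanning fact above, linearity forces $\phi(C)=XC$ for every $C\in M_{nq}$, completing the argument.

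I do not anticipate a genuine obstacle: the proof is entirely parallel to Lemmas~\ref{thm:block-tran-1}--\ref{thm:block-tran-3}, requires no hypothesis on ${\rm char}(\F)$, and the only thing that actually has to be checked is that $n\ge2$ is precisely what guarantees the two spanning facts (for $n=1$ one has $\sL_1=0$, and indeed there is nothing to prove and no $X$ to single out). The one bookkeeping subtlety is that $\phi$ is a priori defined on the whole of $M_{nq}$, so one must be careful to note that knowing $\phi$ on the products $BA$ is enough to determine it everywhere --- which is exactly the content of $\sL_n\cdot M_{nq}=M_{nq}$.
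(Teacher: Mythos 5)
Your proof is correct and is essentially the argument the paper intends but omits: the left-multiplication (column) mirror of Lemma \ref{thm:block-tran-1}, resting on the spanning fact that $\{B\,\E{j1}{nq}\mid j\in[n],\ B\in\s\l_n\}$ spans the first column space of $M_{nq}$ and that $\s\l_n\cdot M_{nq}=M_{nq}$ when $n\ge 2$, exactly the analogue of the key fact the paper cites for Lemma \ref{thm:block-tran-3}. No gaps; no characteristic assumption is needed, as you note.
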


Next we give two lemmas related to the bracket operation.

\begin{lemma}\label{thm:sl_n-case}
Suppose ${\rm char}(\F)\ne 2,3$. If a linear transformation $\phi:{\s\l}_{n}\to M_{nm}$ satisfies that
\begin{equation}\label{sl_n-case}
\phi(AB-BA)=A\phi(B)-B\phi(A),\quad  \text{for all}\ A, B\in {\s\l}_{n},
\end{equation}
then there is $X\in M_{nm}$ such that $\phi(C)=CX$ for $C\in {\s\l}_{n}$.
\end{lemma}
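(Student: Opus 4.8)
The plan is to mimic the proof of Lemma~\ref{thm:block-tran-1}: identify a candidate matrix $X$ from the action of $\phi$ on a convenient piece of $\s\l_n$, then bootstrap from the relation \eqref{sl_n-case} to all of $\s\l_n$. The natural piece to start from is the span of the off-diagonal standard matrices $\E{ij}{nn}$ with $i\ne j$, together with the traceless diagonal matrices; since $\mathrm{char}(\F)\ne 2,3$ is assumed, we have freedom to divide by small integers coming from commutators of these generators. First I would record the basic commutator identities in $\s\l_n$: for $i\ne j$, $[\E{1i}{nn},\E{ij}{nn}]=\E{1j}{nn}$ when $1,i,j$ are distinct, and $[\E{1i}{nn},\E{i1}{nn}]=\E{11}{nn}-\E{ii}{nn}$. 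Applying $\phi$ via \eqref{sl_n-case} to such brackets, and using that $\E{1i}{nn}\phi(B)$ only sees the $i$-th row of $\phi(B)$, I would show that $\phi$ maps the ``first-row'' part of $\s\l_n$ in a controlled way and read off a unique $X\in M_{nm}$ with $\phi(\E{1j}{nn})=\E{1j}{nn}X$ for all $j\ne 1$, and (from the diagonal brackets) the compatible value on $\E{11}{nn}-\E{jj}{nn}$.

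Next I would propagate: having pinned $\phi$ on the first row, the identity $\phi([\E{i1}{nn},\E{1j}{nn}])=\E{i1}{nn}\phi(\E{1j}{nn})-\E{1j}{nn}\phi(\E{i1}{nn})$ should let me solve for $\phi(\E{ij}{nn})$ and $\phi(\E{i1}{nn})$ row by row, matching $\E{ij}{nn}X$ each time; the case distinctions (whether $i,j,1$ coincide) are where the constants $2$ and $3$ enter, e.g. a bracket may collapse two copies of the same generator and produce a coefficient $2$ or $3$ that must be inverted. Since $\{\E{ij}{nn}:i\ne j\}\cup\{\E{11}{nn}-\E{jj}{nn}:j\ge 2\}$ is a basis of $\s\l_n$, once $\phi(C)=CX$ is verified on all of these, linearity finishes it. The auxiliary Lemmas~\ref{thm:block-tran-3} and \ref{thm:block-tran-4}, which already handle ``$\phi(AB)=A\varphi(B)$''-type relations with $\s\l_n$ in one slot, may also be invoked to shortcut part of the row-by-row propagation by feeding them the maps $A\mapsto$ (a fixed row of $\phi(A)$).

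The main obstacle I anticipate is the small-characteristic bookkeeping, not the structure: when $n=2$ the algebra $\s\l_2$ has only three basis elements and many of the ``distinct index'' brackets above are unavailable, so the relation \eqref{sl_n-case} must be squeezed using $[\E{12}{nn},\E{21}{nn}]$, $[\,\E{11}{nn}-\E{22}{nn},\E{12}{nn}\,]=2\E{12}{nn}$, and $[\,\E{11}{nn}-\E{22}{nn},\E{21}{nn}\,]=-2\E{21}{nn}$; here dividing by $2$ (hence $\mathrm{char}(\F)\ne2$) is essential, and a further relation forces the exclusion of characteristic $3$ — this is exactly the phenomenon exhibited in the $\mathrm{char}(\F)=3$ counterexample earlier in the section. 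So I would treat $n=2$ as a separate short case and handle $n\ge 3$ by the generic argument above, being careful that every division performed is by an integer that is a unit under the hypothesis $\mathrm{char}(\F)\ne2,3$.
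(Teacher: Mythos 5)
Your plan is sound, and in its second half it is genuinely different from the paper's proof. Both arguments hinge on the same first (and hardest) step: showing that $\phi(E_{ij})$ ($i\ne j$) has nonzero entries only in its $i$-th row, and $\phi(E_{ii}-E_{jj})$ only in rows $i$ and $j$. The paper gets this by applying \eqref{sl_n-case} to the $\s\l_2$-triple $H=E_{ii}-E_{jj}$, $E=E_{ij}$, which yields $(2I_n-H)\phi(E)=-E\phi(H)$, and then inverting $2I_n-H=\diag(2,\dots,1,\dots,3,\dots,2)$; this is exactly where ${\rm char}(\F)\ne 2,3$ is used. After that, the paper does not propagate over the basis as you propose: it extends $\phi$ to a map $\phi^+$ on all of $M_n$ by setting $\phi^+(E_{11}):=E_{12}\phi(E_{21})$, verifies the identity \eqref{E_11-sl_n} on basis elements, and then obtains $X=\phi^+(I_n)$ in one stroke by putting $B=I_n$ in \eqref{M_n-case}. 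Your route instead reads off $X$ from the first-row/first-column elements and pushes it to the remaining basis elements via brackets such as $[E_{i1},E_{1j}]=E_{ij}$ and $[E_{ij},E_{ji}]=E_{ii}-E_{jj}$; this does work (the unwanted terms, e.g.\ $E_{1j}\phi(E_{i1})$, vanish by the row-support facts), and it avoids the extension trick, but it costs you consistency bookkeeping that the paper's evaluation at $I_n$ makes unnecessary: the first row of $X$ is not determined by the $\phi(E_{1j})$'s, so it must be extracted from, say, $\phi(E_{21})$ and then checked against every $\phi(E_{i1})$ and every $\phi(E_{11}-E_{jj})$.

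Two caveats. First, the row-support step is the real content of the lemma and your sketch only asserts it; moreover, your description of where $3$ enters is not accurate: no bracket of standard basis elements of $\s\l_n$ has a structure constant $3$ (they are $0,\pm1,\pm2$). The $3$ arises as the entry $2-(-1)$ of $2I_n-H$, i.e.\ isolating the $j$-th row of $\phi(E_{ij})$ in $2\phi(E)=H\phi(E)-E\phi(H)$ gives $3$ times that row equal to zero; in the paper's localization argument this division by $3$ is performed for every $n\ge 2$, not only for $n=2$ (and for $n=2$ it is genuinely unavoidable, as the ${\rm char}(\F)=3$ example following Lemma \ref{thm:sl_n-case-2} shows). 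Second, the suggested shortcut through Lemmas \ref{thm:block-tran-3} and \ref{thm:block-tran-4} does not apply as stated: their hypotheses require a relation $\phi(AB)=A\varphi(B)$ with $A$ ranging over a full matrix space $M_{mn}$, whereas \eqref{sl_n-case} only provides the antisymmetric relation inside a single copy of $\s\l_n$; so the propagation must be carried out by hand, as you in fact outline.
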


\begin{proof}
The case $n=1$ is obviously true.  We now assume that $n\ge 2$.
Let $\{E_{ij}\mid i,j\in[n]\}$ be the standard basis of $M_n$.  
Then $\s\l_n$ has the standard basis
$\{E_{ij}\mid i,j\in[n], i\ne j\}\cup\{H_i\mid i\in[n-1]\}$, where $H_i:=E_{ii}-E_{i+1,i+1}$.
We have $M_n=\s\l_n\oplus\F E_{11}$.

First we prove that the only possibly nonzero row of $\phi(E_{ij})$  ($i\ne j$) is the $i$-th row, and
 the only possibly nonzero rows of $\phi(H_i)=\phi(E_{ii}-E_{i+1,i+1})$ ($i\in [n-1]$) are the $i$-th  and the $(i+1)$-th rows.

Suppose $i,j\in[n]$ with $i<j$.  Denote $E:=E_{ij}$, $F:=E_{ji}$, and $H:=E_{ii}-E_{jj}$.
Then $\{H, E,F\}\in\s\l_n$ is the standard triple of a $\s\l_2$ subalgebra. We have
$$
2\phi(E)=\phi([H,E])=H\phi(E)-E\phi(H)\quad\Longrightarrow\quad (2I_n-H)\phi(E)=-E\phi(H).
$$
When ${\rm char}(\F)\ne 2, 3$, the matrix $2I_n-H=\diag(2,2,\cdots,\underset{i}{1},\cdots,\underset{j}{3},\cdots,2)$ is  invertible and diagonal. The matrix $(2I_n-H)^{-1}$ is again diagonal with 1 as the $i$-th diagonal entry.  So we have
$$
\phi(E)=-(2I_n-H)^{-1}E_{ij}\phi(H)=-E_{ij}\phi(H).
$$
In particular, $\phi(E_{ij})=\phi(E)$ has zeros outside of the $i$-th row.
Similar argument works for $E_{ji}$.

For $H_i=E_{ii}-E_{i+1,i+1}$, we have
$$
\phi(H_i)=\phi([E_{i,i+1},E_{i+1,i}])=E_{i,i+1}\phi(E_{i+1,i})-E_{i+1,i}\phi(E_{i,i+1}).
$$
Therefore,
 $\phi(H_i)$ has zeros outside of the $i$-th  and the $(i+1)$-th rows.

Next we extend the map $\phi$ from the domain $\s\l_n$ to the domain $M_n$ such that property \eqref{sl_n-case} still hold  in $M_n$.  Define the linear transformation $\phi^+:M_n\to M_{nm}$ as follow:
$$
\begin{cases}
 \phi^+(A)=\phi(A), &\text{for $A\in\s\l_n$;}
 \\
 \phi^+(E_{11})=E_{12}\phi(E_{21}).
\end{cases}
$$
Then $\phi^+$ is an extension of $\phi$ from $\s\l_n$ to $M_n$.
To verify \eqref{sl_n-case}-like property for $\phi^+$  in $M_n$, it suffices to  prove the following equality  for   all $A$ in the standard basis of $\s\l_n$:
\begin{equation}\label{E_11-sl_n}
\phi^+(E_{11}A-AE_{11})=E_{11}\phi^+(A)-A\phi^+(E_{11})=E_{11}\phi(A)-A E_{12}\phi(E_{21}).
\end{equation}
\begin{enumerate}
\item $A=E_{1j}$, $1\ne j\in[n]$: the left side of  \eqref{E_11-sl_n} is $\phi^+(E_{1j})=\phi(E_{1j})$.
The right side of  \eqref{E_11-sl_n} is
$E_{11}\phi(E_{1j})$.  Both sides are clearly equal since   $\phi(E_{1j})$ has zero entries outside of the first row.

\item $A=E_{i1}$, $1\ne i\in [n]$: the proof   is similar.

\item $A=E_{ij}$, $i,j\in [n]-\{1\}$, $i\ne j$: both sides of  \eqref{E_11-sl_n} are  zero.

\item $A=H_1=E_{11}-E_{22}$:   the left side of  \eqref{E_11-sl_n} is zero.
The right side of  \eqref{E_11-sl_n} is
$$
E_{11}\phi(H_1)-H_1E_{12}\phi(E_{21})=E_{11}\phi(H_1)-E_{12}\phi(E_{21}).
$$
We have
\begin{eqnarray*}
-2\phi(E_{21})=\phi([H_1,E_{21}])=H_1\phi(E_{21})-E_{21}\phi(H_1)
=-\phi(E_{21})-E_{21}\phi(H_1),
\end{eqnarray*}
where the last equality holds since $\phi(E_{21})$ has zeros outside of the second row.
Therefore, $\phi(E_{21})=E_{21}\phi(H_1)$, and the  right side of  \eqref{E_11-sl_n} is
$$E_{11}\phi(H_1)-E_{12}\phi(E_{21})=E_{11}\phi(H_1)-E_{12}E_{21}\phi(H_1)=0.$$
So both sides are equal.

\item $A=H_i$, $i\in [n-1]-\{1\}$:
 Both sides of  \eqref{E_11-sl_n} are clearly zero.
\end{enumerate}
Overall,  \eqref{E_11-sl_n} is proved. We have
\begin{equation}\label{M_n-case}
\phi^+(AB-BA)=A\phi^+(B)-B\phi^+(A),\quad\text{for all } A, B\in M_n.
\end{equation}

Finally, let $B=I_n$ in \eqref{M_n-case}, then
$$0=A\phi^+(I_n)-I_n\phi^+(A)\quad\Rightarrow\quad \phi^+(A)=A\phi^+(I_n).$$
Setting $X:=\phi^+(I_n)$, we get $\phi(A)=AX$ for  all $A\in\s\l_n$.
\end{proof}

Similarly, we have the following result.

\begin{lemma}\label{thm:sl_n-case-2}
Suppose ${\rm char}(\F)\ne 2,3$. If a linear transformation $\phi:{\s\l}_{n}\to M_{mn}$ satisfies that
\begin{equation}\label{sl_n-case-2}
\phi(AB-BA)=\phi(A)B-\phi(B)A,\quad  \text{for all}\ A, B\in {\s\l}_{n},
\end{equation}
then there is $X\in M_{mn}$ such that $\phi(C)=XC$ for $C\in {\s\l}_{n}$.
\end{lemma}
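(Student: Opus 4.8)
The plan is to deduce Lemma \ref{thm:sl_n-case-2} from Lemma \ref{thm:sl_n-case} by transposition, so as to avoid repeating the long case analysis. Matrix transpose is a linear bijection $M_{mn}\to M_{nm}$ that reverses products, and it restricts to a linear automorphism of $\s\l_n$ because it preserves trace. So, given $\phi\colon\s\l_n\to M_{mn}$ satisfying \eqref{sl_n-case-2}, I would define $\psi\colon\s\l_n\to M_{nm}$ by $\psi(C):=\phi(C^{T})^{T}$.

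The key step is to verify that $\psi$ satisfies the hypothesis \eqref{sl_n-case} of Lemma \ref{thm:sl_n-case}. For $A,B\in\s\l_n$ one also has $A^{T},B^{T}\in\s\l_n$, and since $(AB-BA)^{T}=B^{T}A^{T}-A^{T}B^{T}$,
$$\psi(AB-BA)=\phi\bigl(B^{T}A^{T}-A^{T}B^{T}\bigr)^{T}=\bigl(\phi(B^{T})A^{T}-\phi(A^{T})B^{T}\bigr)^{T},$$
where the second equality applies \eqref{sl_n-case-2} to the pair $(B^{T},A^{T})$. Transposing the last expression reverses each product and turns it into $A\,\phi(B^{T})^{T}-B\,\phi(A^{T})^{T}=A\psi(B)-B\psi(A)$. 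Thus $\psi$ satisfies \eqref{sl_n-case}, and Lemma \ref{thm:sl_n-case} provides $Y\in M_{nm}$ with $\psi(C)=CY$ for all $C\in\s\l_n$. Transposing $\phi(C^{T})^{T}=CY$ gives $\phi(C^{T})=Y^{T}C^{T}$; as $C\mapsto C^{T}$ ranges over all of $\s\l_n$, this reads $\phi(D)=Y^{T}D$ for every $D\in\s\l_n$, so $X:=Y^{T}\in M_{mn}$ is the matrix claimed.

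The main thing to watch is the bookkeeping of the order-reversal: transposing the bracket $[A,B]$ produces $[B^{T},A^{T}]$, not $[A^{T},B^{T}]$, so the hypothesis must be invoked with the arguments swapped — and it is exactly this swap that converts the right-multiplication identity \eqref{sl_n-case-2} into the left-multiplication identity \eqref{sl_n-case}. No further use of ${\rm char}(\F)\ne 2,3$ is needed beyond what already enters Lemma \ref{thm:sl_n-case}.

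Should a self-contained argument be preferred instead, the plan would be to mirror the proof of Lemma \ref{thm:sl_n-case} with rows replaced by columns and left multiplication by right multiplication: for $i\ne j$, use $[E_{ii}-E_{jj},E_{ij}]=2E_{ij}$ and the invertibility of $2I_n+(E_{ii}-E_{jj})=\diag(\dots,3,\dots,1,\dots)$ (valid since ${\rm char}(\F)\ne 2,3$) to get $\phi(E_{ij})=\phi(E_{ii}-E_{jj})E_{ij}$, which is supported in the $j$-th column; deduce that $\phi(H_i)$, where $H_i=E_{ii}-E_{i+1,i+1}$, is supported in columns $i$ and $i+1$; extend $\phi$ to $\phi^{+}\colon M_n\to M_{mn}$ by $\phi^{+}(E_{11}):=\phi(E_{12})E_{21}$; verify the analogue of \eqref{E_11-sl_n} on the standard basis of $\s\l_n$; and finally put $B=I_n$ in the resulting identity on $M_n$. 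On this route the main obstacle is the $A=H_1$ case of the extended identity, which rests on the relation $\phi(E_{12})=\phi(H_1)E_{12}$; the remaining cases are a routine mirror of the original.
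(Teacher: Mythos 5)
Your proposal is correct, and your primary route is genuinely different from the paper's. The paper proves Lemma \ref{thm:sl_n-case} in full (row-support analysis for $\phi(E_{ij})$ and $\phi(H_i)$, extension of $\phi$ to $M_n$ via $\phi^+(E_{11})=E_{12}\phi(E_{21})$, then $B=I_n$) and disposes of Lemma \ref{thm:sl_n-case-2} with ``similarly,'' i.e.\ it intends the mirrored argument that you sketch as your fallback. You instead conjugate by the transpose anti-automorphism: $\psi(C):=\phi(C^T)^T$ lands in $M_{nm}$, the order reversal $(AB-BA)^T=B^TA^T-A^TB^T$ forces you to invoke \eqref{sl_n-case-2} with the arguments swapped, and this is exactly what converts the right-multiplication identity into the hypothesis \eqref{sl_n-case}; transposing $\psi(C)=CY$ back gives $\phi(D)=Y^TD$ with $X=Y^T\in M_{mn}$, as claimed (transpose preserves $\s\l_n$ since it preserves trace, and all dimensions match). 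This buys a two-line deduction in place of a second page of case analysis, and it makes transparent that the characteristic assumption enters only through Lemma \ref{thm:sl_n-case}. Your backup mirrored proof is also sound and is essentially the paper's intended argument: the relation $\phi(E)(2I_n+H)=\phi(H)E$ with $2I_n+H=\diag(\dots,3,\dots,1,\dots)$ invertible for ${\rm char}(\F)\ne 2,3$ yields column support, the extension $\phi^+(E_{11}):=\phi(E_{12})E_{21}$ mirrors the paper's, the delicate case is indeed the analogue of the $A=H_1$ case of \eqref{E_11-sl_n} via $\phi(E_{12})=\phi(H_1)E_{12}$, and setting $B=I_n$ finishes. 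Either route is acceptable; the transposition reduction is the tidier one.
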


The statements of Lemmas \ref{thm:sl_n-case} and \ref{thm:sl_n-case-2}  also hold when ${\rm char}(\F)= 2$, but the proofs should be adjusted slightly.
We will not need the case ${\rm char}(\F)= 2$ here.
The following counterexample shows that Lemma \ref{thm:sl_n-case} is not true when ${\rm char}(\F)= 3$.
Likewise for Lemma \ref{thm:sl_n-case-2}.

\begin{ex}
Suppose ${\rm char}(\F)= 3$.  In $M_2$, let $H:=E_{11}-E_{22}$, and $\phi:\s\l_2\to M_2$  the linear map given by
$$
\phi(E_{12}):=E_{21},\qquad
\phi(E_{21}):=0,\qquad
\phi(H):=0.
$$
Then $\phi$ satisfies \eqref{sl_n-case}  since
\begin{eqnarray*}
\phi([H,E_{12}]) &=& 2\phi(E_{12})=2E_{21}=-E_{21}=H\phi(E_{12})-E_{12}\phi(H),
\\
\phi([H,E_{21}])&=& -2\phi(E_{21})=0=H\phi(E_{21})-E_{21}\phi(H),
\\
\phi([E_{12},E_{21}])&=& \phi(H)=0=E_{12}\phi(E_{21})-E_{21}\phi(E_{12}).
\end{eqnarray*}
However, there is no $X\in M_2$ such that $\phi(E_{12})=E_{21}=E_{12}X$.
\end{ex}

\begin{lemma}{\label{range-M_L^0}}
Suppose $\rm {char}(\mathbb{F})\neq 2$.   Then for any $f\in\Der(M_{\L}^0)$: 
\begin{eqnarray}
f(\wt{\s\l_{kk}})
  &\subseteq &
  \wt{\s\l_{kk}}+\sum_{i=1}^{k-1} \wt{\M_{ik}}+\sum_{j=k+1}^{t} \wt{\M_{kj}},\qquad
\text{for}\quad  (k,k)\in [I(\L)];
\label{M_L^0-kk}\\
f(\wt{\M_{pq}})
  &\subseteq &
  \wt{\M_{pq}}+\sum_{i=1}^{p-1} \wt{\M_{iq}}+\sum_{j=q+1}^{t} \wt{\M_{pj}},\qquad
\text{for}\quad  1\le p<q \le t.
\label{M_L^0-pq}
\end{eqnarray}
\end{lemma}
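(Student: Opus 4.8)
The plan is to follow the blueprint of Lemmas~\ref{thm:I_kk} and~\ref{thm:A_pq}, which proved the analogous ``dominance'' statement for $M_{\L}$, but with $\wt{I_{kk}}$ and $\wt{\M_{kk}}$ replaced by $\wt{\s\l_{kk}}$, which does lie in $M_{\L}^0$. Two facts are used throughout. Since $\L$ is SDUT, every present diagonal block $(k,k)\in[I(\L)]$ has size at least $2$, so $\s\l_{kk}$ is perfect whenever $\mathrm{char}(\F)\ne 2$, i.e.\ $\s\l_{kk}=[\s\l_{kk},\s\l_{kk}]$ (this is the only place $\mathrm{char}(\F)\ne 2$ is needed). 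Also $f(M_{\L}^0)\subseteq M_{\L}^0\subseteq M_{\L_B}$, so every $f$-image is block upper triangular with respect to $\gamma_{\L}$. To prove~\eqref{M_L^0-kk}, for $B,C\in\s\l_{kk}$ I would expand $f(\wt{[B,C]})=[f(\wt B),\wt C]+[\wt B,f(\wt C)]$; since the commutator of a block upper triangular matrix with an element of $\wt{\M_{kk}}$ is supported on the $k$-th block row and the $k$-th block column, this gives $f(\wt{[B,C]})\in\wt{\M_{kk}}+\sum_{i<k}\wt{\M_{ik}}+\sum_{j>k}\wt{\M_{kj}}$; by perfectness the same holds for all of $f(\wt{\s\l_{kk}})$, and intersecting with $M_{\L}^0$ forces the $(k,k)$-block to be traceless.

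For~\eqref{M_L^0-pq} I would induct on $q-p\ge 1$. The inductive step ($q-p\ge 2$) is routine: choose $k$ with $p<k<q$; since $\L$ is SDUT all strictly above-diagonal block indices lie in $[I(\L)]$, so $(p,k),(k,q)\in[I(\L)]$, and $\wt{\M_{pq}}$ is the span of brackets $[\wt{A_{pk}},\wt{A_{kq}}]=\wt{A_{pk}A_{kq}}$. Expanding $f$ of such a bracket, applying the inductive hypothesis to $f(\wt{\M_{pk}})$ and $f(\wt{\M_{kq}})$, and checking that all the resulting commutators land in $\wt{\M_{pq}}+\sum_{i<p}\wt{\M_{iq}}+\sum_{j>q}\wt{\M_{pj}}$, yields~\eqref{M_L^0-pq} for $(p,q)$; no further hypotheses enter here.

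The base case $q-p=1$ is the main obstacle. By Theorem~\ref{thm:dominant-ladder} at least one of $(p,p),(p+1,p+1)$ lies in $[I(\L)]$; assume $(p,p)\in[I(\L)]$, the other case being symmetric (right multiplication by $\s\l_{p+1,p+1}$, where Lemmas~\ref{thm:block-tran-3} and~\ref{thm:block-tran-4} are convenient). Since block $p$ has size $\ge 2$, $\wt{\M_{p,p+1}}$ is the span of products $\wt{S_{pp}A_{p,p+1}}=[\wt{S_{pp}},\wt{A_{p,p+1}}]$ with $S_{pp}\in\s\l_{pp}$; expanding $f$ of this bracket and using~\eqref{M_L^0-kk} for $f(\wt{S_{pp}})$ together with block upper triangularity of $f(\wt{A_{p,p+1}})$ shows that $f(\wt{S_{pp}A_{p,p+1}})$ already lies in the target set \emph{except} possibly for a $(p,p)$-block and blocks $(i,p)$ with $i<p$; explicitly one obtains $f(\wt{S_{pp}A_{p,p+1}})_{ip}=-f(\wt{A_{p,p+1}})_{ip}S_{pp}$ and $f(\wt{S_{pp}A_{p,p+1}})_{pp}=[S_{pp},f(\wt{A_{p,p+1}})_{pp}]$. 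Showing these two extra blocks vanish is the crux: because $\wt{S_{pp}}$ does not act as an identity the way $\wt{I_{pp}}$ did in Lemma~\ref{thm:A_pq}, one cannot read this off directly. Instead I would use that the products $S_{pp}A_{p,p+1}$ span $\M_{p,p+1}$ together with the facts that $\s\l_{pp}$ contains invertible matrices (for a size-$2$ block a short explicit computation with $E_{11}-E_{22}$ and $E_{12}\pm E_{21}$ does it): substituting several well-chosen $S_{pp}$ into the two displayed identities, and for the diagonal block iterating by applying $f$ to $[\wt{S'_{pp}},\wt{S_{pp}A_{p,p+1}}]$, forces $f(\wt{A_{p,p+1}})_{ip}=0$ ($i<p$) and $f(\wt{A_{p,p+1}})_{pp}=0$ first on the spanning set, hence on all of $\wt{\M_{p,p+1}}$. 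This, together with a small case split on the block size versus $\mathrm{char}(\F)$, finishes the base case and the induction; that case split is the one genuinely delicate point.
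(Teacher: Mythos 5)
Your skeleton is the same as the paper's: \eqref{M_L^0-kk} via perfectness of $\s\l_{kk}$ (block size $\ge 2$ since $\L$ is SDUT), then \eqref{M_L^0-pq} by induction on $q-p$, with the base case $q=p+1$ reduced, exactly as in the paper, to showing that $f(\wt{A_{p,p+1}})_{ip}=0$ for all $i\le p$, starting from the two identities $f(\wt{S_{pp}A_{p,p+1}})_{ip}=-f(\wt{A_{p,p+1}})_{ip}S_{pp}$ ($i<p$) and $f(\wt{S_{pp}A_{p,p+1}})_{pp}=[S_{pp},f(\wt{A_{p,p+1}})_{pp}]$. The gap is at the step you yourself call the crux: for the off-diagonal blocks $(i,p)$ with $i<p$, no amount of ``well-chosen $S_{pp}$'' substituted into the first identity can force vanishing when the $(p,p)$ block has size $2$. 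Indeed, that identity says the linear map $g(A):=f(\wt{A})_{ip}$ satisfies $g(SA)=-g(A)S$ for all $S\in\s\l_{pp}$, $A\in\M_{p,p+1}$, and this functional equation has nonzero solutions for a $2\times 2$ block: take $g(A)=u\,(Ae_1)^{T}J$ with $J=\smtx{0 & 1\\ -1 & 0}$, $e_1$ the first standard basis column and $u$ any fixed nonzero column of the size of block $i$; since $S^{T}J=-JS$ for every $S\in\s\l_2$ (self-duality of the natural $\s\l_2$-module), $g$ satisfies the identity identically. So invertible elements of $\s\l_{pp}$ and iteration of that identity alone cannot yield $g=0$; an extra relation is genuinely needed.

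The missing ingredient, which is what the paper uses, is to apply $f$ to a vanishing bracket of two elements of $\wt{\M_{p,p+1}}$ and read off the $(i,p+1)$ block: writing $E^{[p,p+1]}_{kj}=E^{[pp]}_{ks}E^{[p,p+1]}_{sj}$ with $s\ne k$, the first identity gives $f(\wt{E^{[p,p+1]}_{kj}})_{ip}=-f(\wt{E^{[p,p+1]}_{sj}})_{ip}E^{[pp]}_{ks}$, and then $0=f([\wt{E^{[p,p+1]}_{kj}},\wt{E^{[p,p+1]}_{sj}}])_{i,p+1}$ collapses to $-2f(\wt{E^{[p,p+1]}_{sj}})_{ip}E^{[p,p+1]}_{kj}=0$, so ${\rm char}(\F)\ne 2$ kills these blocks. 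This also contradicts your assertion that ${\rm char}(\F)\ne 2$ enters only through perfectness of $\s\l_{kk}$: it is used again, essentially, precisely at this point. Your plan for the diagonal $(p,p)$ block (iterating the bracket identity with $E^{[pp]}_{kr}$, $E^{[pp]}_{rk}$) is in substance what the paper does and is fine, and the block-size case split you worry about occurs only there ($m=2$ versus $m\ge 3$) and is harmless; but as written the proposal does not prove the vanishing of the $(i,p)$ blocks, $i<p$, so the base case — and hence the lemma — is not established.
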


The proof is similar to that of Lemma \ref{thm:A_pq}, with some slight  adjustments.

\begin{proof}
Given  $(k,k)\in [I(\L)]$, we have
$[ \wt{\s\l_{kk}},  \wt{\s\l_{kk}}]= \wt{\s\l_{kk}}$ in $M_{\L}^0$.  For $A_{kk}, B_{kk}\in \s\l_{kk}$,
$$
f([\wt{A_{kk}},\wt{B_{kk}}])=[f(\wt{A_{kk}}),\wt{B_{kk}}]+[\wt{A_{kk}},f(\wt{B_{kk}})]
\ \in\ \wt{\s\l_{kk}}+\sum_{i=1}^{k-1} \wt{\M_{ik}}+\sum_{j=k+1}^{t} \wt{\M_{kj}}.
$$
So \eqref{M_L^0-kk} is done.

Given   $1\le p<q\le t$, we prove \eqref{M_L^0-pq} by induction on $\ell:=q-p$:

\begin{enumerate}
\item $\ell=1$:  Here $(p,q)=(p,p+1)\in [I(\L)]$.  By Theorem \ref{thm:dominant-ladder}, at least one of $(p,p)$ and $(p+1,p+1)$ is in $[I(\L)]$.
Without loss of generality, suppose $(p,p)\in [I(\L)]$.  Since $\L$ is SDUT, the matrices in $\s\l_{pp}$ have the size $m\ge 2$.  Therefore
$[\wt{\s\l_{pp}},\wt{\M_{p,p+1}}]=\wt{\M_{p,p+1}}$ in $M_{\L}^0$.
Let $\{\cdot\}_{ij}$ also denote the embedding of $\M_{ij}$ to $\wt{\M_{ij}}\in M_n$. 
For $A_{pp}\in \s\l_{pp}$, $A_{p,p+1}\in \M_{p,p+1}$,
\begin{eqnarray}\label{A_p(p+1)}
f(\wt{A_{pp} A_{p,p+1}})
&=&
f([\wt{A_{pp}},\wt{A_{p,p+1}}])
=
[f(\wt{A_{pp}}),\wt{A_{p,p+1}}]+[\wt{A_{pp}},f(\wt{A_{p,p+1}})]
\\ \notag
&\in&\wt{\M_{p,p+1}}+\sum_{i=1}^{p-1} \wt{\M_{i,p+1}}+\sum_{j=p+2}^{t} \wt{\M_{pj}}
\\ \notag
&&\qquad
-\sum_{i=1}^{p-1}  \left\{f(\wt{A_{p,p+1}})_{ip}A_{pp}\right\}_{ip} +\left\{[A_{pp}, f(\wt{A_{p,p+1}})_{pp}]\right\}_{pp}.
\end{eqnarray}
 
To get \eqref{M_L^0-pq} for $q-p=1$, 
it remains to prove that $f(\wt{E_{kj}^{[p,p+1]}})_{ip}=0$ for any  given standard matrix $E_{kj}^{[p,p+1]}$ in $\M_{p,p+1}$ and $i\in[p]$. 
  There are two cases:
\begin{itemize}
\item $i\in [p-1]$:  \eqref{A_p(p+1)} shows that for $A_{pp}\in \s\l_{pp}$ and $A_{p,p+1}\in \M_{p,p+1}$,
\begin{align}\label{B1}
f(\wt{A_{pp}A_{p,p+1}})_{ip}&=-f({\wt{A_{p,p+1}}})_{ip}A_{pp}.
\end{align}
Since the size $m$ of $\s\l_{pp}$ is no less than $2$, we can choose $s\in [m]-\{k\}$.  Then
\begin{equation}\label{E_kj}
f(\wt{E^{[p,p+1]}_{kj}})_{ip}
= f(\wt{E^{[pp]}_{ks}E^{[p,p+1]}_{sj}})_{ip}
= -f(\wt{E^{[p,p+1]}_{sj}})_{ip}E^{[pp]}_{ks}.
\end{equation}
However, we also have
\begin{eqnarray*}
0&=& f([\wt{E^{[p,p+1]}_{kj}}, \wt{E^{[p,p+1]}_{sj}}])_{i,p+1}
\\
&=& [f(\wt{E^{[p,p+1]}_{kj}}), \wt{E^{[p,p+1]}_{sj}}]_{i,p+1}+[\wt{E^{[p,p+1]}_{kj}}, f(\wt{E^{[p,p+1]}_{sj}})]_{i,p+1}
\\
&=&
f(\wt{E^{[p,p+1]}_{kj}})_{ip}  E^{[p,p+1]}_{sj} -f(\wt{E^{[p,p+1]}_{sj}})_{ip}E^{[p,p+1]}_{kj}
\\
&=&
 -f(\wt{E^{[p,p+1]}_{sj}})_{ip}E^{[pp]}_{ks}  E^{[p,p+1]}_{sj} -f(\wt{E^{[p,p+1]}_{sj}})_{ip}E^{[p,p+1]}_{kj}\qquad\text{(by \eqref{E_kj})}
\\
&=& -2 f(\wt{E^{[p,p+1]}_{sj}})_{ip}E^{[p,p+1]}_{kj}.
\end{eqnarray*}
Since ${\rm char}(\F)\ne 2$, the $k$-th column  of $f(\wt{E^{[p,p+1]}_{sj}})_{ip}$ must be zero.
Then  \eqref{E_kj} shows  that
$f(\wt{E^{[p,p+1]}_{kj}})_{ip}=-f(\wt{E^{[p,p+1]}_{sj}})_{ip}E^{[pp]}_{ks}=0.$

\item $i=p$:  \eqref{A_p(p+1)} shows that  for $A_{pp}\in \s\l_{pp}$ and $A_{p,p+1}\in \M_{p,p+1}$,
\begin{align*}
f(\wt{A_{pp}A_{p,p+1}})_{pp}&=[A_{pp}, f({\wt{A_{p,p+1}}})_{pp}]= A_{pp}f({\wt{A_{p,p+1}}})_{pp}-f({\wt{A_{p,p+1}}})_{pp}A_{pp}.
\end{align*}
In particular, for $r\in[m]-\{k\}$, we have $E_{kr}^{[pp]}\in{\s\l}_{pp}$ and
\begin{align}\label{B3}
f(\wt{E_{kj}^{[p,p+1]}})_{pp}=f(\wt{E_{kr}^{[pp]}}\wt{E_{rj}^{[p,p+1]}})_{pp}=E_{kr}^{[pp]}f(\wt{E_{rj}^{[p,p+1]}})_{pp}
-f(\wt{E_{rj}^{[p,p+1]}})_{pp}E_{kr}^{[pp]}.
\end{align}
Denote 
$$A=\mtx{a_{ij}}_{m\times m} :=f(\wt{E_{kj}^{[p,p+1]}})_{pp}.$$ 
\eqref{B3} implies that all nonzero entries of $A$ are located in the $k$-th row and  the $r$-th column. 
If $m\geq 3$,  we can replace $r$ by any $s\in [m]-\{k,r\}$ in  \eqref{B3} to show that
 all nonzero entries of $A$ are located in the $k$-th row. 
  In both $m=2$ and $m\ge 3$ cases, we have
\begin{equation}\label{E_kk^pp}
A=E_{kk}^{[pp]}A+a_{rr} E_{rr}^{[pp]}.
\end{equation}
Applying \eqref{B3} twice, we get
\begin{eqnarray*}
A
 &=& \left [E_{kr}^{[pp]}, f(\wt{E_{rj}^{[p,p+1]}})_{pp}\right] = \left[E_{kr}^{[pp]},\left[E_{rk}^{[pp]},f(\wt{E_{kj}^{[p,p+1]}})_{pp}\right]\right]
\\
 &=& E_{kk}^{[pp]}A-E_{kr}^{[pp]}A E_{rk}^{[pp]}
-E_{rk}^{[pp]}A E_{kr}^{[pp]}+A E_{rr}^{[pp]}
\\
 &=& (A-a_{rr} E_{rr}^{[pp]})-E_{kr}^{[pp]}(E_{kk}^{[pp]}A+a_{rr} E_{rr}^{[pp]})E_{rk}^{[pp]}
-E_{rk}^{[pp]}A E_{kr}^{[pp]}+A E_{rr}^{[pp]}
\quad\text{(by \eqref{E_kk^pp})}
\\
&=& 
A-a_{rr} (E_{rr}^{[pp]}+E_{kk}^{[pp]})-E_{rk}^{[pp]}AE_{kr}^{[pp]}+AE_{rr}^{[pp]}.
\end{eqnarray*}
Therefore,
\begin{align*}
a_{rr} (E_{rr}^{[pp]}+E_{kk}^{[pp]})+E_{rk}^{[pp]}A E_{kr}^{[pp]}=A E_{rr}^{[pp]}.
\end{align*}
Comparing the $(k,k)$ (resp. $(r,r)$, $(k,r)$) entry, we get $a_{rr} =0$ (resp. $a_{kk}=0$, $a_{kr}=0$).
Since $r\in [m]-\{k\}$ is arbitrary, we have $f(\wt{E_{kj}^{[p,p+1]}})_{pp}=0$.

\end{itemize}

We finish the proof for $\ell=1$. 

\item
Suppose \eqref{M_L^0-pq} is true for all $\ell<k$.  Now for any $(p,p+k)\in [I(\L)]$,
we have
$[\wt{\M_{p,p+1}},\wt{\M_{p+1,p+k}}]=\wt{\M_{p,p+k}}$ in $M_{\L}^0$,  and by induction hypothesis,
\begin{eqnarray*}
f(\wt{A_{p,p+1} A_{p+1,p+k}})
&=&f([\wt{A_{p,p+1}},\wt{A_{p+1,p+k}}])
= [f(\wt{A_{p,p+1}}),\wt{A_{p+1,p+k}}]+[\wt{A_{p,p+1}},f(\wt{A_{p+1,p+k}})]
\\
&\in&
 \wt{\M_{p,p+k}}+\sum_{i=1}^{p-1} \wt{\M_{i,p+k}}+\sum_{j=p+k+1}^{t} \wt{\M_{pj}}.
\end{eqnarray*}
Therefore,  \eqref{M_L^0-pq} is true for $\ell=k$.

\item
Overall,  \eqref{M_L^0-pq} is proved for all $(p,q)\in [I(\L)]$ with $p<q$.  
\qedhere

\end{enumerate}

\end{proof}

\begin{lemma}\label{existence and uniqueness}
Suppose $\rm {char}(\mathbb{F})\neq 2, 3$. Let $f\in \Der(M^{0}_{\L})$. Then
 for  any  $1\le p< q\le t$, there exists $X_{pq}\in \M_{pq}$ such that
\begin{align}
f(\wt{A_{ip}})_{iq} &=-A_{ip}X_{pq},\quad \text{for all}\ (i,p)\in [I(\L)] \text{ and } \wt{A_{ip}}\in \wt{\M_{ip}}\cap M_{\L}^0,
\label{M_L^0-A_ip}
\\
f(\wt{A_{qj}})_{pj} &=X_{pq}A_{qj},\quad \text{for all}\ (q,j)\in [I(\L)] \text{ and } \wt{A_{qj}}\in \wt{\M_{qj}}\cap M_{\L}^0.
\label{M_L^0-A_qj}
\end{align}
\end{lemma}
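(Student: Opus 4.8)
The plan is to follow step~(3) of the proof of Theorem~\ref{Main theorem} as closely as possible, the essential change being that the ``identity block'' anchors $\wt{I_{kk}}$ used there live in $M_{\L}$, not in $M_{\L}^{0}$, and must be replaced by anchors drawn from the traceless diagonal blocks $\wt{\s\l_{kk}}$. Throughout I would use Lemma~\ref{range-M_L^0} in the role that Lemma~\ref{thm:A_pq} plays for $M_{\L}$: it confines $f(\wt{A_{ip}})$ to the blocks dominated by $(i,p)$, i.e.\ to block row $i$ and block column $p$, which is precisely what kills all but one summand in each bracket identity below.

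Fix $1\le p<q\le t$. For the anchor step I would produce $X_{pq}\in\M_{pq}$ from whichever configuration is available. (i) If $(q,q)\in[I(\L)]$ (so $\M_{qq}$ has size $\ge 2$, as $\L$ is SDUT), then expanding $f([\wt{A_{qq}},\wt{B_{qq}}])_{pq}$ and discarding the off-row/off-column terms via Lemma~\ref{range-M_L^0} yields $\phi(AB-BA)=\phi(A)B-\phi(B)A$ for $\phi:=f(\wt{\cdot})_{pq}:\s\l_{qq}\to\M_{pq}$, and Lemma~\ref{thm:sl_n-case-2} gives $X_{pq}$ with $f(\wt{A_{qq}})_{pq}=X_{pq}A_{qq}$. (ii) Symmetrically, if $(p,p)\in[I(\L)]$, Lemma~\ref{thm:sl_n-case} (applied to $\s\l_{pp}\to\M_{pq}$) gives $X_{pq}$ with $f(\wt{A_{pp}})_{pq}=-A_{pp}X_{pq}$. (iii) If $(p,p)\notin[I(\L)]$ and $(q,q)\notin[I(\L)]$, then by Theorem~\ref{thm:dominant-ladder} $q\ge p+2$, so there is room for a full-block identity $\wt{\M_{q,q+2}}=[\wt{\M_{q,q+1}},\wt{\M_{q+1,q+2}}]$ (when $q\le t-2$) or $\wt{\M_{p-2,p}}=[\wt{\M_{p-2,p-1}},\wt{\M_{p-1,p}}]$ (when $p\ge 3$), to which Lemma~\ref{thm:block-tran-2} resp.\ Lemma~\ref{thm:block-tran-1} applies; in the remaining tight sub-cases (e.g.\ $q=t-1$, or $p=2$, while $(p,p),(q,q)\notin[I(\L)]$) one is pushed onto the single off-diagonal block $\wt{\M_{q-1,q}}$ or $\wt{\M_{p,p+1}}$ adjacent to a nonzero $\s\l_{kk}$ and uses a one-sided $\s\l$-linearity statement --- the elementary analogue of Lemmas~\ref{thm:block-tran-3}--\ref{thm:block-tran-4} in which the same map appears on both sides of a relation $\psi(CA)=\psi(C)A$ with $A\in\s\l_{kk}$ --- which holds because $\s\l_m$ ($m\ge 2$) contains all off-diagonal matrix units.

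Once $X_{pq}$ is pinned down on one side I would propagate it across the rest of that block row / block column by chaining the transfer lemmas along the identities $\wt{\M_{q,j+1}}=[\wt{\M_{qj}},\wt{\M_{j,j+1}}]$ and $\wt{\M_{i-1,p}}=[\wt{\M_{i-1,i}},\wt{\M_{ip}}]$ (possibly with one $\s\l_{kk}$-link), exactly as in the inductive part of Theorem~\ref{Main theorem}'s proof (cf.\ formula~\eqref{ApkAkq}); and I would obtain the relations on the other side for free from $[\wt{A_{ip}},\wt{A_{qj}}]=0$, whose expansion (using $i\le p<q\le j$, so the bracket vanishes) gives $A_{ip}\bigl(f(\wt{A_{qj}})_{pj}-X_{pq}A_{qj}\bigr)=0$ and $\bigl(f(\wt{A_{ip}})_{iq}+A_{ip}X_{pq}\bigr)A_{qj}=0$. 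Since the blocks have positive size this forces \eqref{M_L^0-A_ip} and \eqref{M_L^0-A_qj}, and also shows that the two candidates produced from the two sides agree.

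I expect the main obstacle to be purely organizational: keeping the case analysis of the anchor step honest in the corner configurations where a zero diagonal block sits at $(1,1)$ or $(t,t)$ and the index $p$ or $q$ is adjacent to it --- there the ``obvious'' anchor on one side degenerates to the single-map $\s\l$-linearity situation, and one must either transfer from the other side via the commutation identity above or invoke that small $\s\l$-module lemma. No idea beyond those already used in Section~\ref{sect:proof} and in Lemmas~\ref{thm:block-tran-3}--\ref{range-M_L^0} is needed, and the hypothesis ${\rm char}(\F)\ne 2,3$ enters only through Lemmas~\ref{thm:block-tran-3}--\ref{thm:sl_n-case-2}.
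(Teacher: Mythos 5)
Your proposal is correct and follows essentially the same route as the paper's own proof: anchor $X_{pq}$ on one side via Lemmas \ref{thm:sl_n-case}--\ref{thm:sl_n-case-2} or the transfer Lemmas \ref{thm:block-tran-1}--\ref{thm:block-tran-4} (with Lemma \ref{range-M_L^0} confining the blocks), propagate along the block row/column, and recover the other side together with uniqueness from the vanishing bracket $[\wt{A_{ip}},\wt{A_{qj}}]=0$. The differences are only organizational --- the paper invokes the $\s\l$-lemmas just at the corner blocks $(1,1)$ and $(t,t)$ and treats everything else with Lemmas \ref{thm:block-tran-2} and \ref{thm:block-tran-4}, whereas you anchor at any available diagonal block and add a small one-sided $\s\l$-module lemma for the tight sub-cases (where, incidentally, the single blocks involved are $\wt{\M_{q,q+1}}$ and $\wt{\M_{p-1,p}}$ rather than $\wt{\M_{q-1,q}}$ and $\wt{\M_{p,p+1}}$, a harmless index slip).
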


The proof is similar to part (3) of the proof of Theorem \ref{Main theorem} in Section \ref{sect:proof}.

\begin{proof}
Given $p<q$ in $[t]$, we consider the following four situations:
\begin{enumerate}
\item Suppose $(q,j)=(t,t)\in [I(\L)]$. For any $A_{tt}, B_{tt}\in {\s\l}_{tt}$,
\begin{align*}
f([\wt{A_{tt}}, \wt{B_{tt}}])_{pt}&=[f(\wt{A_{tt}}), \wt{B_{tt}}]_{pt}+[\wt{A_{tt}}, f(\wt{B_{tt}})]_{pt}=f(\wt{A_{tt}})_{pt}B_{tt}-f(\wt{B_{tt}})_{pt}A_{tt}.
\end{align*}
Applying Lemma \ref{thm:sl_n-case-2} to the map $\phi:{\s\l}_{tt}\to \M_{pt}$ defined by $\phi(C)=f(\wt{C})_{pt}$, we can find $X_{pt}\in \M_{pt}$ such that $f(\wt{A_{tt}})_{pt}=X_{pt}A_{tt}$ for $A_{tt}\in {\s\l}_{tt}$.

\item
Similarly, when $(i,p)=(1,1)$, there exists $Y_{1q}\in \M_{1q}$ such that $f(\wt{A_{11}})_{1q}=-A_{11}Y_{1q}$ for all $A_{11}\in {\s\l}_{11}$.

\item
Suppose $(q,j)\in [I(\L)]$, $(q,j)\neq (t,t)$.
Then $q<t$.  Given any $j< j'$ in $[t]$, we have $(j,j'), (q,j'), (p,j),(p,j')\in [I(\L)]$, and $\wt{\M_{qj'}}=\wt{\M_{qj}}\wt{\M_{jj'}}=[\wt{\M_{qj}},\wt{\M_{jj'}}]$.

\begin{itemize}
\item If $q=j$,  then for $A_{qj}\in{\s\l}_{qq}$ and $A_{jj'}\in\M_{jj'}$,
$$
f(\wt{A_{qj} A_{jj'}})_{pj'}
=f([\wt{A_{qj}},\wt{A_{jj'}}])_{pj'}=[f(\wt{A_{qj}}),\wt{A_{jj'}}]_{pj'}+[\wt{A_{qj}},f(\wt{A_{jj'}})]_{pj'}
=f(\wt{A_{qj}})_{pj}A_{jj'}.
$$
Applying Lemma \ref{thm:block-tran-4} to the map $\phi:\M_{qj'}\to \M_{pj'}$ defined by $\phi(C)=f(\wt{C})_{pj'}$, and $\varphi:{\s\l}_{qq}\to \M_{pq}$ defined by $\varphi(D)=f(\wt{D})_{pj}$, there exists $X_{pq}\in \M_{pq}$ such that $f(\wt{A_{qj}})_{pj}=X_{pq}A_{qj}$ for $A_{qj}\in {\s\l}_{qq}$,
and $f(\wt{A_{qj'}})_{pj'}=X_{pq}A_{qj'}$ for  any $j'>j$ in $[t]$ and any $A_{qj'}\in {\M}_{qj'}$.

\item If $q<j$, then for $A_{qj}\in{\M}_{qj}$ and $A_{jj'}\in\M_{jj'}$, we still have
$$
f(\wt{A_{qj} A_{jj'}})_{pj'}
=f([\wt{A_{qj}},\wt{A_{jj'}}])_{pj'}=[f(\wt{A_{qj}}),\wt{A_{jj'}}]_{pj'}+[\wt{A_{qj}},f(\wt{A_{jj'}})]_{pj'}
=f(\wt{A_{qj}})_{pj}A_{jj'}.
$$
Applying Lemma \ref{thm:block-tran-2},  there exists a (unique)  $X_{pq}\in \M_{pq}$ such that $f(\wt{A_{qj}})_{pj}=X_{pq}A_{qj}$ for all $j>q$ in $[t]$.
\end{itemize}

\item Suppose $(i,p)\in [I(\L)]$ and $(i,p)\neq (1,1)$. Similar to the proceeding argument, there exists $-Y_{pq}\in \M_{pq}$ such that $f(\wt{A_{ip}})_{iq}=-A_{ip}Y_{pq}$ for $(i,p)\in [I(\L)]$ and $A_{ip}\in{\M}_{ip}$.

\item For any $(i,p), (q,j)\in [I(\L)]$, we have $[\wt{A_{ip}}, \wt{A_{qj}}]=0$. So
\begin{eqnarray*}
0&=&f([\wt{A_{ip}},\wt{A_{qj}}])_{ij}
=[f(\wt{A_{ip}}),\wt{A_{qj}}]_{ij}+[\wt{A_{ip}},f(\wt{A_{qj}})]_{ij}
\\
&=& f(\wt{A_{ip}})_{iq}A_{qj}+A_{ip}f(\wt{A_{qj}})_{pj}
=-A_{ip}Y_{pq}A_{qj}+A_{ip}X_{pq}A_{qj}.
\end{eqnarray*}
Therefore, $X_{pq}=Y_{pq}$.
\qedhere
\end{enumerate}

\end{proof}


Now we are ready to prove  Theorem \ref{thm:strongly dominant}.

\begin{proof}[Proof of Theorem \ref{thm:strongly dominant}]
We have the Lie subalgebra decomposition
$$
M_{\L}={\rm span}\{ E_{11}^{[kk]}\mid (k,k)\in [I(\L)]\}\ltimes M_{\L}^0.
$$
Given $f\in\Der(M_{\L}^0)$, we define $f^+ (A):=f(A)$ for $A\in M_{\L}^0$.  The next step is to define $f^+(\wt{E_{11}^{[kk]}})$  for each $(k,k)\in [I(\L)]$ appropriately
so that  $f^+\in\Der(M_{\L})$.
We will let
$$
f^+(\wt{E_{11}^{[kk]}})\ \in\ \wt{\s\l_{kk}}+\sum_{i=1}^{k-1} \wt{\M_{ik}}+\sum_{j=k+1}^{t} \wt{\M_{kj}}
$$
and define the nonzero blocks of $f^+(\wt{E_{11}^{[kk]}})$ as follow.   

\begin{enumerate}

\item The $(k,k)$ block: it is easy to see that $f(\wt{\cdot})_{kk}: \s\l_{kk}\to \s\l_{kk}$, $A_{kk}\mapsto f(\wt{A_{kk}})_{kk}$, is a derivation of $\s\l_{kk}$.
Since ${\rm char}(\F)\ne 2$, there exists $X_{kk}\in\s\l_{kk}$ such that $ f(\wt{A_{kk}})_{kk}=[X_{kk}, A_{kk}]$ for $A_{kk}\in\s\l_{kk}$.
Define
\begin{equation}\label{def-kk}
f^+(\wt{E_{11}^{[kk]}})_{kk}:=[X_{kk}, E_{11}^{[kk]}].
\end{equation}

\item The $(i,k)$ block, $i<k$:
by Lemma \ref{existence and uniqueness},  there exists $X_{ik}\in \M_{ik}$ such that $f(\wt{A_{kj}})_{ij}=X_{ik}A_{kj}$ for any $(k,j)\in [I(\L)]$. Define
        \begin{align}\label{def-ik}
        f^{+}(\wt{E^{[kk]}_{11}})_{ik}:=X_{ik}E^{[kk]}_{11}\quad \text {for all }  i\in [k-1].
        \end{align}

\item The $(k,j)$ block, $k<j$: by Lemma \ref{existence and uniqueness}, there exists $X_{kj}\in \M_{kj}$ such that for all $(i,k)\in [I(\L)]$ we have $f(\wt{A_{ik}})_{ij}=-A_{ik}X_{kj}$. Define
        \begin{align}\label{def-kj}
        f^{+}(\wt{E^{[kk]}_{11}})_{kj}:=-E^{[kk]}_{11}X_{kj}\quad \text {for all }  k<j\leq t.
        \end{align}

\end{enumerate}

The above process uniquely defines a linear map $f^{+}\in \rm {End(M_{\L})}$ such that $f^{+}|_{M^{0}_{\L}}=f$.
Next we verify that $f^{+}\in \rm {Der(M_{\L})}$. It suffices to prove that for every $(i,j)\in [I(\L)]$,
\begin{equation}\label{f-E11-1}
f^{+}([\wt {E^{[kk]}_{11}}, \wt{A_{ij}}])=[f^{+}(\wt {E^{[kk]}_{11}}), \wt{A_{ij}}]+[\wt {E^{[kk]}_{11}}, f^{+}(\wt{A_{ij}})]\quad \text{for all}\ \wt{A_{ij}}\in \wt{\M_{ij}}\cap M^{0}_{\L}.
\end{equation}
Denote
\begin{equation}\label{X_k}
X_k:=\wt{X_{kk}}+\sum_{i=1}^{k-1} \wt{X_{ik}}+\sum_{j=k+1}^t \wt{X_{kj}}.
\end{equation}
Then \eqref{def-kk}, \eqref{def-ik},  and \eqref{def-kj} imply that $f^+(\wt{E^{[kk]}_{11}})=[X_k,\wt{E^{[kk]}_{11}}]$. So
\eqref{f-E11-1} is equivalent to
\begin{equation}\label{f-E11-2}
f([\wt {E^{[kk]}_{11}}, \wt{A_{ij}}])=[[X_k,\wt{E^{[kk]}_{11}}], \wt{A_{ij}}]+[\wt {E^{[kk]}_{11}}, f(\wt{A_{ij}})]\quad \text{for all}\ \wt{A_{ij}}\in \wt{\M_{ij}}\cap M^{0}_{\L}.
\end{equation}
We will prove \eqref{f-E11-2} for each block $(i,j)\in [I(\L)]$:

\begin{enumerate}

\item $(k,k)\in [I(\L)]$:   the matrices $X_{kk}$, $X_{ik}$ ($i<k$), and $X_{kj}$ ($k<j$)   satisfy that
$$
f(\wt{A_{kk}})=[X_k,\wt{A_{kk}}]\quad \text{for all } A_{kk}\in\s\l_{kk},
$$
where $X_k$ is given by \eqref{X_k}.
Therefore, \eqref{f-E11-2} is true for $(i,j)=(k,k)\in [I(\L)]$.

\item $(k,j)$, $k<j\leq t$:
when $(i,j)=(k,j)$, we have
$$
[\wt {E^{[kk]}_{11}}, \wt{A_{kj}}]=\wt {E^{[kk]}_{11}} \wt{A_{kj}}=\wt {E^{[kk]}_{12}} \wt {E^{[kk]}_{21}} \wt{A_{kj}}=
[\wt {E^{[kk]}_{12}},[\wt {E^{[kk]}_{21}}, \wt{A_{kj}}]].
$$
So \eqref{f-E11-2} is equivalent to the following equalities:
\begin{eqnarray*}
&&f([\wt {E^{[kk]}_{12}},[\wt {E^{[kk]}_{21}}, \wt{A_{kj}}]])=[[X_k,\wt{E^{[kk]}_{11}}], \wt{A_{kj}}]+[\wt {E^{[kk]}_{11}}, f(\wt{A_{kj}})]
\\
  &\Longleftrightarrow & f(\wt {E^{[kk]}_{12}})\wt {E^{[kk]}_{21}}\wt{A_{kj}}+\wt {E^{[kk]}_{12}}f(\wt {E^{[kk]}_{21}})\wt{A_{kj}}+\wt {E^{[kk]}_{12}}\wt {E^{[kk]}_{21}}f(\wt{A_{kj}})=[X_k,\wt{E^{[kk]}_{11}}]\wt{A_{kj}}+E^{[kk]}_{11}f(\wt{A_{kj}})
  \\
 &\Longleftrightarrow &  f(\wt {E^{[kk]}_{12}})\wt {E^{[kk]}_{21}}\wt{A_{kj}}+\wt {E^{[kk]}_{12}}f(\wt {E^{[kk]}_{21}})\wt{A_{kj}}=[X_k,\wt{E^{[kk]}_{11}}]\wt{A_{kj}}\quad (\text{for all $A_{kj}\in \M_{kj}$})
 \\
 &\Longleftrightarrow & f(\wt {E^{[kk]}_{12}})\wt {E^{[kk]}_{21}}+\wt {E^{[kk]}_{12}}f(\wt {E^{[kk]}_{21}})=[X_k,\wt{E^{[kk]}_{11}}]
 \\
 &\Longleftrightarrow &[X_k,\wt{E^{[kk]}_{12}}]\wt {E^{[kk]}_{21}}+\wt {E^{[kk]}_{12}}[X_k,\wt{E^{[kk]}_{21}}]=[X_k,\wt{E^{[kk]}_{11}}].
\end{eqnarray*}
The last equality is obviously true.

\item $(i,k)$, $1\le i<k$: similarly, we can prove  \eqref{f-E11-2} for the case $(i,j)=(i,k)$.

\item $(i,j)\in [I(\L)]$, $i\ne k$, $j\ne k$:
the left side of \eqref{f-E11-2} is zero.  We investigate  the right side of  \eqref{f-E11-2} in three cases:
\begin{enumerate}
 \item $i\le j<k$: the only possibly nonzero block in  the right side of  \eqref{f-E11-2} is the $(i,k)$ block, which is
\begin{align*}
[[X_k,\wt{E^{[kk]}_{11}}], \wt{A_{ij}}]_{ik}+[\wt {E^{[kk]}_{11}}, f(\wt{A_{ij}})]_{ik}
&= -A_{ij}[X_k,\wt{E^{[kk]}_{11}}]_{jk}-f(\wt{A_{ij}})_{ik}E^{[kk]}_{11}
\\
&= -A_{ij}[X_k,\wt{E^{[kk]}_{11}}]_{jk}+A_{ij}X_{jk}E^{[kk]}_{11}\ \ \text{(by Lemma \ref{existence and uniqueness})}
\\
&= -A_{ij}X_{jk}E^{[kk]}_{11}+A_{ij}X_{jk}E^{[kk]}_{11}\ \ \text{(by \eqref{X_k})}
\\
&= 0.
\end{align*}
So  \eqref{f-E11-2} is done for this case.

\item $k<i\leq j$: similarly, we can prove \eqref{f-E11-2} for this case.

\item  $i<k<j$:  the right side of  \eqref{f-E11-2} is
$$
[[X_k,\wt{E^{[kk]}_{11}}], \wt{A_{ij}}]+[\wt {E^{[kk]}_{11}}, f(\wt{A_{ij}})]=0+0=0.
$$
So \eqref{f-E11-2} holds.

\end{enumerate}

\end{enumerate}

Overall, we have proved \eqref{f-E11-2}.  Therefore, $f^+\in\Der(M_{\L})$ and  $f^+|_{M_{\L}^0}=f$.
By Theorem \ref{Main theorem}, there is $X\in M_{\L_B}$ such that
$f(B)=[X, B]$ for all $B\in M_{\L}^0$.
\end{proof}


\end{document}